\setlist{leftmargin=6mm,nolistsep,noitemsep}
\crefname{question}{Question}{Questions}
\crefname{step}{Step}{Steps}
\crefname{claim}{Claim}{Claims}
\crefname{problem}{Problem}{Problems}
\crefname{definition}{Definition}{Definitions}
\crefname{observation}{Observation}{Observations}
\DeclareMathOperator{\conv}{conv}
\newcommand{\MP}{\text{MP}}
\newcommand{\QP}{\text{QP}}
\newcommand{\BQP}{\text{BQP}}
\newcommand{\PP}{\text{PP}}
\def\tw{{\rm tw}}
\def\poly{{\rm poly}}
\newcommand{\ie}{i.e., }
\renewcommand{\S}{\mathcal S}
\def\R{{\mathbb R}}
\def\Q{{\mathcal Q}}
\def\X{{\mathcal X}}
\def\A{{\mathcal A}}
\newtheorem{theorem}{Theorem}
\newtheorem{corollary}{Corollary}
\newtheorem{proposition}{Proposition}
\newtheorem{example}{Example}
\newtheorem{observation}{Observation}
\newtheorem{lemma}{Lemma}
\def\P{{\mathcal P}}
\newcommand{\C}{\mathcal C}
\newcommand{\NP}{\mathcal {NP}}
\newcommand{\D}{\mathcal D}
\title{A second-order cone representable class of nonconvex quadratic programs}
\author{Santanu S. Dey
\thanks{ H. Milton Stewart School of Industrial and Systems Engineering,
   Georgia Institute of Technology.
             E-mail: {\tt santanu.dey@isye.gatech.edu}.
             }
\and
Aida Khajavirad
\thanks{Department of Industrial and Systems Engineering,
             Lehigh University.
             E-mail: {\tt aida@lehigh.edu}.
             }
}
\begin{document}

\maketitle

\begin{abstract}
    We consider the problem of minimizing a sparse nonconvex quadratic function over the unit hypercube. By developing an extension of the Reformulation-Linearization Technique (RLT) to continuous quadratic sets, we propose a novel second-order cone (SOC) representable relaxation for this problem. By exploiting the sparsity of the quadratic function, we establish a sufficient condition under which the convex hull of the feasible region of the lifted quadratic program is SOC-representable.
    While the proposed formulation may be of exponential size in general, we identify additional structural conditions that guarantee the existence of a polynomial-size SOC-representable formulation, which can be constructed in polynomial time. Under these conditions, the optimal value of the nonconvex quadratic program coincides with that of a polynomial-size second-order cone program.
    Our results serve as a starting point for bridging the gap between the Boolean quadric polytope of sparse problems and its continuous counterpart.
\end{abstract}

\emph{Key words:} nonconvex quadratic programming, convex hull, second-order cone representable, Boolean quadric polytope, polynomial-size extended formulation.

\section{Introduction}

We consider a nonconvex box-constrained quadratic program:
\begin{align}\label[problem]{pQP}
\tag{QP}
\min \quad & x^\top Q x + c^\top x \\
{\rm s.t.} \quad & x \in [0,1]^n, \nonumber
\end{align}
where $c \in \R^n$ and $Q \in \R^{n\times n}$ is a symmetric matrix. It is well known that~\cref{pQP} is $\NP$-hard in general~\cite{HorTuy96}. If $Q$ is positive-semidefinite, then~\cref{pQP} is a convex optimization problem and can be solved in polynomial-time. Henceforth, we assume that $Q$ is not positive semidefinite.
Following a common practice in nonconvex optimization, we linearize the objective function of~\cref{pQP} by introducing new variables $Y := x x^\top$, thus obtaining a reformulation of this problem in a lifted space of variables: 
\begin{align}\label[problem]{lQP}
\tag{$\ell$QP}
\min \quad & \langle Q,Y\rangle  + c^\top x \\
{\rm s.t.} \quad & Y = x x^\top \nonumber\\ 
& x \in [0,1]^n, \nonumber
\end{align}
where $\langle Q,Y\rangle$ denotes the matrix inner product.
Throughout this paper, given a set $\C$, we denote by $\conv(\C)$, the convex hull of the set $\C$.
We are interested in obtaining sufficient conditions under which~\cref{lQP} can be solved via a polynomial-size convex relaxation. We define
$$
\QP_n := \conv\Big\{(x, Y) \in \R^{n+\frac{n(n+1)}{2}}: Y = x x^\top, \; x \in [0,1]^n\Big\}.
$$
In~\cite{BurLet09}, the authors study some fundamental properties of $\QP_n$ and investigate the strength of existing relaxations for this set.

\subsection{Semidefinite programming relaxations}
Semidefinite programming (SDP) relaxations are perhaps the most popular convex relaxations for nonconvex quadratic programs and quadratically-constrained quadratic programs~\cite{Ye10,BaoSahTaw11}. The core idea is to replace the nonconvex constraint $Y=xx^\top$ by the convex relaxation $Y\succeq xx^\top$, to obtain the following basic SDP relaxation of~\cref{lQP}: 
\begin{align}\label[problem]{bSDP}
\tag{bSDP}
\min \quad & \langle Q,Y\rangle  + c^\top x \\
{\rm s.t.} \quad & \begin{bmatrix}
1  & x^\top \\
x & Y
\end{bmatrix} \succeq 0 \nonumber\\ 
& {\rm diag}(Y) \leq x \nonumber\\
& x \in [0,1]^n, \nonumber
\end{align}
where ${\rm diag}(Y)$ denotes the vector in $\R^n$ containing the diagonal entries of $Y$. 
It then follows that a convex relaxation of $\QP_n$ is given by:
$$
\C_n^{\rm SDP} := \Big\{(x, Y) \in \R^{n+\frac{n(n+1)}{2}}:\begin{bmatrix}
1  & x^\top \\
x & Y
\end{bmatrix} \succeq 0, \; {\rm diag}(Y) \leq x, \; x \in [0,1]^n\Big\}.
$$
In~\cite{KimKoj03}, the authors proved that 
if the off-diagonal entries of $Q$ are nonpositive and the vector $c$ is entry-wise nonpositive, then the optimal value of~\cref{pQP} equals the optimal value of~\cref{bSDP}. 
\cref{bSDP} can be further strengthened by incorporating the following so-called \emph{McCormick inequalities}~\cite{McC76}: 
\begin{equation}\label{McCor}
    Y_{ij} \geq 0, \;\; Y_{ij} \geq x_i + x_j -1, \;\; Y_{ij} \leq x_i, \;\; Y_{ij} \leq x_j, \;\; \forall 1 \leq i < j \leq n.
\end{equation}
We then define a stronger convex relaxation for $\QP_n$:
\begin{equation}\label{SDPMC}
\C_n^{\rm SDP+MC} := \Big\{(x, Y) \in \R^{n+\frac{n(n+1)}{2}}: (x,Y) \in \C_n^{\rm SDP}, \; (x,Y)\; {\rm satisfy \; inequalities}~\eqref{McCor}\Big\}.
\end{equation}
In~\cite{AnsBur10}, utilizing ideas from copositive programming, the authors proved that if $n=2$, then $\QP_n = \C_n^{\rm SDP+MC}$, while if $n=3$, then $\QP_n \subsetneq \C_n^{\rm SDP+MC}$. To date, obtaining an explicit characterization of $\QP_3$ remains an open question.

\subsection{Binary quadratic programming and the Boolean quadric polytope} 
Minimizing a quadratic function over the set of binary points, henceforth referred to as binary quadratic programming, is a fundamental $\NP$-hard problem in discrete optimization:
\begin{align}\label[problem]{BQP}
\tag{BQP}
\min \quad & x^\top Q x + c^\top x \\
{\rm s.t.} \quad & x \in \{0,1\}^n. \nonumber
\end{align} 
Since $x^2_i = x_i$ for $x_i \in \{0,1\}$, without loss of generality, we can assume that the diagonal entries of $Q$ are zero. As before, to linearize the objective function we define $Y_{ij} := x_i x_j$ for all $1 \leq i < j \leq n$, and obtain a reformulation of~\cref{BQP} in a lifted space of variables: 
\begin{align}\label[problem]{LBQP}
\tag{$\ell$BQP}
\min \quad & 2\sum_{1\leq i < j \leq n}{q_{ij} Y_{ij}} + c^\top x \\
{\rm s.t.} \quad & Y_{ij} = x_i x_j, \; \forall 1 \leq i < j \leq n, \nonumber\\
& x \in \{0,1\}^n \nonumber.
\end{align} 
In~\cite{Pad89}, Padberg introduced the \emph{Boolean quadric polytope} as the convex hull of the feasible region of~\cref{LBQP}: 
$$
\BQP_n := \conv\Big\{(x, Y) \in \R^{n+\frac{n(n-1)}{2}}: Y_{ij} = x_i x_j, \; \forall 1 \leq i < j \leq n, \; x \in \{0,1\}^n\Big\}.
$$
He then studied the facial structure of $\BQP_n$ and introduced various classes of facet-defining inequalities for it. If $n=2$, then $\BQP_n$ can be fully characterized by McCormick inequalities~\eqref{McCor}. If $n= 3$, then the facet-description of $\BQP_n$ is obtained by adding the following so-called \emph{triangle inequalities}:
\begin{eqnarray}\label{triineq}
    \begin{split}
        & Y_{ij} + Y_{ik} \leq x_i + Y_{jk} \\
        & Y_{ij} + Y_{jk} \leq x_j + Y_{ik} \\
        & Y_{ik} + Y_{jk} \leq x_k + Y_{ij} \\
        & x_i + x_j + x_k - Y_{ij} - Y_{ik} - Y_{jk} \leq 1
    \end{split} \quad \forall 1 \leq i < j < k \leq n,
\end{eqnarray}
to McCormick inequalities. Clearly, $\BQP_n$ is closely related to $\QP_n$. In~\cite{BurLet09}, the authors investigated the connections between $\QP_n$ and $\BQP_n$. In particular, they showed that $\BQP_n$ is the projection of $\QP_n$ obtained by projecting out variables $Y_{ii}$, $i \in [n]$.  
They then showed that if a linear inequality is valid for $\BQP_n$, it is also valid for $\QP_n$.
In addition, their results imply that  both McCormick inequalities and
triangle inequalities define facets of $\QP_n$. Subsequently, they introduced a stronger convex relaxation for $\QP_n$:
\begin{equation}\label{SDPMCTri}
\C_n^{\rm SDP+MC+Tri} := \Big\{(x, Y) \in \R^{n+\frac{n(n+1)}{2}}: (x,Y) \in \C_n^{\rm SDP}, \; (x,Y)\; {\rm satisfy \; inequalities}~\eqref{McCor} \; and \;~\eqref{triineq}\Big\}.
\end{equation}
Yet, they showed that if $n=3$, then $\QP_n \subsetneq \C_n^{\rm SDP+MC+Tri}$. In~\cite{AnsPug25}, the authors introduced additional classes of valid inequalities for $\QP_n$ whose addition to $\C_n^{\rm SDP+MC+Tri}$ results in a stronger convex relaxation for $\QP_n$.

Recall that, given a convex set $\C \subseteq \mathbb{R}^n$, an \emph{extended formulation} for $\C$ is a convex set $\D \subseteq \R^{n + r}$, for some $r > 0$, such that
$\C = \{ x \in \mathbb{R}^n \mid \exists y \in \mathbb{R}^r \text{ such that } (x, y) \in \D \}$.
If $\C$ has a polynomial-size extended formulation $\D$ that can be constructed in polynomial-time, then optimizing a linear function over $\C$ can be done in polynomial-time.
Since~\cref{BQP} is $\NP$-hard in general, unless $\P=\NP$, one cannot obtain, in polynomial time, a polynomial-size extended formulation for $\BQP_n$. 
However, the key observation is that in many applications, the objective function of~\cref{BQP} is \emph{sparse}; \ie $q_{ij} = 0$ for many pairs $i,j$. To this end, Padberg~\cite{Pad89} introduced the Boolean quadric polytope for sparse problems, which we define next. Let $G=(V,E)$ be a graph, where we define a node $i \in V$ for each independent variable $x_i$, $i \in [n]$. Two nodes $i,j$ are adjacent; \ie $\{i,j\} \in E$, when $q_{ij} \neq 0$. We then define the Boolean quadric polytope of a graph $G$ as follows:
$$
\BQP(G) := \conv\Big\{(x, Y) \in \{0,1\}^{V \cup E}: Y_{ij} = x_i x_j, \; \forall \{i,j\} \in E\Big\}.
$$
In~\cite{Pad89}, Padberg obtained sufficient conditions in terms of the structure of graph $G$ under which $\BQP(G)$ admits an extended formulation whose size can be upper bounded by a polynomial in $|V|,|E|$. The first result characterizes the case for which McCormick inequalities~\eqref{McCor} define the Boolean quadric polytope:

\begin{proposition}[\cite{Pad89}]\label{padtree}
Let $G=(V,E)$ be a graph. If $G$ is acyclic, then $\BQP(G)$ is defined by $4|E|$ linear inequalities; \ie inequalities~\eqref{McCor} for all $\{i,j\} \in E$. 
\end{proposition}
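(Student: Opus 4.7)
The plan is to prove both inclusions, the nontrivial one by induction on $|E|$. Validity of the McCormick inequalities for $\BQP(G)$ is immediate, since any $(x,Y)$ with $x \in \{0,1\}^V$ and $Y_{ij} = x_i x_j$ satisfies $0 \le x_i x_j \le \min(x_i,x_j)$ and $x_i x_j \ge x_i + x_j - 1$. The base case $|E|=0$ is trivial: both sets collapse to $[0,1]^V$ since no $Y$ variables are present.

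For the inductive step I would exploit acyclicity directly: any forest with at least one edge has a leaf $v$ incident to a single edge $\{u,v\}$. Setting $G' := (V \setminus \{v\},\, E \setminus \{\{u,v\}\})$ gives a strictly smaller forest. Given $(x^*,Y^*)$ satisfying the McCormick inequalities on $G$, its restriction to $G'$ satisfies them on $G'$, and the inductive hypothesis yields an integral decomposition of the form $\sum_k \lambda_k (x^k, Y^k)$ with each $(x^k, Y^k) \in \{0,1\}^{(V \setminus \{v\}) \cup (E \setminus \{\{u,v\}\})}$.

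The crux is to glue $v$ and the edge $\{u,v\}$ back in. For each $k$ I would split $\lambda_k = \mu_k^{(0)} + \mu_k^{(1)}$ according to the two binary extensions $x^k_v = 0$ and $x^k_v = 1$, and set $Y^k_{uv} = x^k_u x^k_v$ in each piece. Letting $K_s := \{k : x^k_u = s\}$ and using the marginal identity $\sum_{k \in K_1} \lambda_k = x^*_u$, matching the two new marginals $x^*_v$ and $Y^*_{uv}$ reduces to the pair of linear equations $\sum_{k \in K_1} \mu_k^{(1)} = Y^*_{uv}$ and $\sum_{k \in K_0} \mu_k^{(1)} = x^*_v - Y^*_{uv}$, subject to $0 \le \mu_k^{(1)} \le \lambda_k$. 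The four McCormick inequalities at $\{u,v\}$ translate precisely into $0 \le Y^*_{uv} \le x^*_u$ and $0 \le x^*_v - Y^*_{uv} \le 1 - x^*_u$, which is exactly the feasibility condition for these splits.

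The step that genuinely uses acyclicity is this last one: the leaf gives exactly ``two knobs per piece'' of the decomposition, which is just enough to match the two new marginals independently. If $v$ had a second neighbor $u'$, the choice of $x^k_v$ would simultaneously have to be consistent with the split matching $Y^*_{u'v}$, so the per-piece binary split would be overdetermined; this is exactly where the argument would break on a graph containing a cycle. Acyclicity guarantees a leaf exists and persists under deletion, closing the induction and giving the $4|E|$-inequality description.
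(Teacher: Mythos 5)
Your proof is correct. Note that the paper itself offers no proof of this statement: it is imported verbatim from Padberg~\cite{Pad89} as background, so there is no in-paper argument to compare against line by line. Judged on its own, your leaf-stripping induction is sound: the validity direction is immediate, and in the gluing step the feasibility of the mass split $\lambda_k=\mu_k^{(0)}+\mu_k^{(1)}$ subject to $\sum_{k\in K_1}\mu_k^{(1)}=Y^*_{uv}$ and $\sum_{k\in K_0}\mu_k^{(1)}=x^*_v-Y^*_{uv}$ with $0\le\mu_k^{(1)}\le\lambda_k$ is exactly equivalent to $0\le Y^*_{uv}\le x^*_u$ and $0\le x^*_v-Y^*_{uv}\le 1-x^*_u$, i.e., to the four McCormick inequalities at $\{u,v\}$, as you say. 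Your argument is in fact a hands-on instance of the decomposition machinery the paper does develop later: removing a leaf edge $\{u,v\}$ decomposes the polytope along the single shared vertex $u$, which is the degenerate case of \cref{th decomposition} (the intersection hypergraph is a single node, trivially ``complete''), combined with the observation that $\BQP$ of a single edge is the McCormick polytope. Padberg's original route, and the route the paper's own framework suggests, is to invoke that decomposition lemma abstractly rather than to solve the transportation-type feasibility system explicitly; your version is more elementary and makes visible exactly where acyclicity is used, at the cost of redoing by hand what the lemma packages. Two cosmetic caveats, neither fatal: the box constraints $x_i\in[0,1]$ for isolated vertices are not among the $4|E|$ McCormick inequalities and must be carried implicitly (your base case already assumes this reading), and your closing remark about why the argument breaks on a cycle is motivation rather than proof of necessity, which is fine since the proposition does not claim a converse.
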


Subsequently, Padberg~\cite{Pad89} introduced odd-cycle inequalities, which serve as a generalization of triangle inequalities~\eqref{triineq}. He proved that if $G$ is a series-parallel graph, then the polytope obtained by adding odd-cycle inequalities to McCormick inequalities defines the Boolean quadric polytope. This result implies the following.

\begin{proposition}[\cite{Pad89}]\label{padcycle}
Let $G=(V,E)$ be a graph. If $G$ is a chordless cycle, then $\BQP(G)$ has a polynomial-size linear extended formulation. 
\end{proposition}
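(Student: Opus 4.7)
The plan is to reduce \cref{padcycle} to the acyclic case already handled by \cref{padtree}, via a one-vertex disjunction combined with Balas' disjunctive programming theorem. Pick any vertex $v$ of the chordless cycle $G=(V,E)$. Because every feasible point of the defining set of $\BQP(G)$ has $x_v \in \{0,1\}$, we obtain the disjunctive decomposition
\[
\BQP(G) \;=\; \conv\pare{S_0 \cup S_1}, \qquad S_i \;:=\; \BQP(G) \cap \bra{x_v = i}, \quad i \in \bra{0,1}.
\]

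Next, I would argue that each $S_i$ has a linear-size inequality description. Let $G':=G-v$; since $G$ is a chordless cycle, $G'$ is a path, hence acyclic. By \cref{padtree}, $\BQP(G')$ is the polytope defined by the $4(|E|-2)$ McCormick inequalities on the edges of $G'$. Now for $i=0$, the fixing $x_v=0$ forces $Y_{vu}=0$ for the two neighbors $u$ of $v$; for $i=1$, it forces $Y_{vu}=x_u$. In either case these are linear equalities, so
\[
S_i \;=\; \bra{(x,Y):\; x_v=i,\; Y_{vu}\text{ determined as above for both neighbors } u,\; (x_{V\setminus v},Y_{E(G')}) \in \BQP(G')}
\]
admits a description by $O(n)$ linear inequalities. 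Equality (not merely containment) follows because every $0/1$ vertex of $\BQP(G')$ extends uniquely to a $0/1$ vertex of $\BQP(G)$ living in the hyperplane $\bra{x_v=i}$ via the prescribed values of $Y_{vu}$.

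Finally, I would invoke Balas' theorem for the union of two polyhedra: the convex hull of $S_0\cup S_1$ in ambient dimension $|V|+|E|=2n$ admits an LP extended formulation whose size is linear in $2n$ plus the number of facets of $S_0$ and $S_1$, hence $O(n)$ in total. This yields the required polynomial-size linear extended formulation for $\BQP(G)$.

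The only delicate point is the equality $S_i = \{x_v=i\}\cap(\text{McCormick polytope of }G')\cap\{\text{linear edge-fixings}\}$; the forward inclusion is immediate, while the reverse relies on the fact that extreme points of the right-hand polytope project to $0/1$ vertices on $V\setminus\bra{v}$ (by \cref{padtree}), and together with $x_v=i$ and the forced $Y_{vu}$'s they assemble into integral feasible points of the defining set of $\BQP(G)$. Everything else is routine bookkeeping in Balas' construction. Notably, this avoids any appeal to odd-cycle inequalities, which in their native form are exponentially many on an odd chordless cycle; the disjunction absorbs them implicitly.
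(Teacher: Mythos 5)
Your proof is correct, but it takes a genuinely different route from the paper. The paper obtains \cref{padcycle} as a corollary of Padberg's theorem that for series-parallel graphs the Boolean quadric polytope is described by McCormick plus odd-cycle inequalities (together with the fact, implicit there, that the exponentially many odd-cycle inequalities of a single cycle admit a compact extended formulation, for instance via the bounded-treewidth results cited immediately afterwards). You avoid odd-cycle inequalities entirely: you branch on one vertex $v$ of the cycle, observe that each face $S_i=\BQP(G)\cap\{x_v=i\}$ is an affine copy of $\BQP(G-v)$ --- a path, hence covered by \cref{padtree} with $4(|E|-2)$ McCormick inequalities plus the forced values $Y_{vu}=i\,x_u$ for the two neighbors $u$ of $v$ --- and then glue the two pieces with Balas' disjunctive formulation for the union of two nonempty bounded polyhedra. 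All the steps check out: $\BQP(G)=\conv(S_0\cup S_1)$ because the generating set splits along $x_v\in\{0,1\}$ and $\{x_v=i\}$ cuts out a face of the polytope; the identification of $S_i$ with $\BQP(G-v)$ holds because taking convex hulls commutes with the affine lifting you describe; and Balas' construction is exact for bounded polyhedra and of size $O(n)$ here. Your argument is more self-contained and elementary than the citation chain in the paper, and it is actually closer in spirit to the techniques the paper itself deploys later (the one-variable disjunction in the proof of \cref{convres}, and the tree decomposition of a cycle in \cref{Cyclepoly} that places one distinguished vertex in every bag). What the paper's route buys is generality --- Padberg's result covers all series-parallel graphs at once --- whereas your disjunction exploits the special fact that deleting a single vertex of a cycle leaves a tree, and iterating it on denser graphs would incur exponential blow-up.
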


More generally, in~\cite{laurent2009sums,KolKou15,BieMun18}, the authors proved that given a graph $G=(V,E)$ with treewidth $\tw(G) = \kappa$, the polytope $\BQP(G)$ has a linear extended formulation with $O(2^{\kappa}|V|)$ variables and inequalities. 
Moreover, from~\cite{ChekChu16,AboFio19} it follows that the linear extension complexity of $\BQP(G)$ grows exponentially with the treewidth of $G$. Recall that given a polytope $\P$, 
the linear extension complexity of $\P$ is the minimum number of linear inequalities and equalities in a linear extended formulation of $\P$. Hence, a bounded treewidth for $G$ is a necessary and sufficient condition for the existence of a polynomial-size extended formulation for $\BQP(G)$.

\subsection{Our contributions}

Motivated by the rich literature on the complexity of the Boolean quadric polytope of sparse graphs, in this paper, we study sparse box-constrained quadratic programs. 

In a similar vein to Padberg~\cite{Pad89}, to exploit the sparsity of $Q$, we introduce a graph representation for~\cref{pQP}.  Consider a graph $G=(V,E, L)$,
where $V$ denotes the node set of $G$, $E$ denotes the edge set of $G$ in which each $\{i,j\} \in E$ contains two distinct nodes $i,j \in V$, and $L$ denotes the loop set of $G$ in which each $\{i,i\} \in L$ is a loop connecting some node $i \in V$ to itself.
We then associate a graph $G$ to~\cref{lQP}, where we define a node $i$ for each independent variable $x_i$, for all $i\in [n]$, two distinct nodes $i$, $j$ are adjacent if the coefficient $q_{ij}$ is nonzero, and there is a loop $\{i,i\}$ for some $i \in [n]$, if the coefficient $q_{ii}$ is nonzero. We say that a loop $\{i,i\}$ for some $i \in [n]$, is a \emph{plus loop}, if $q_{ii} > 0$ and is a \emph{minus loop}, if $q_{ii} < 0$. We denote the set of plus loops by $L^+$, and we denote the set of minus loops by $L^-$. We then have $L = L^- \cup L^+$.
Henceforth, for notational simplicity, we denote variables $x_i$, by $z_i$, for all $i \in [n]$, and we denote variables $Y_{ij}$, by $z_{ij}$, for all $\{i,j\} \in E \cup L$. We then consider the following reformulation of~\cref{lQP} that enables us to exploit its sparsity: 
\begin{align}\label[problem]{lQPG}
\tag{$\ell$QPG}
\min \quad & \sum_{\{i,i\} \in L}{q_{ii} z_{ii}}+2 \sum_{\{i,j\} \in E}{q_{ij} z_{ij}}  + \sum_{i\in V}{c_i z_i} \\
{\rm s.t.} \quad & z_{ii} \geq z^2_i, \; \forall \{i, i\} \in L^+\nonumber\\
& z_{ii} \leq z^2_i, \; \forall \{i, i\} \in L^- \nonumber\\ 
& z_{ij} = z_i z_j, \; \forall \{i,j\} \in E\nonumber\\
& z_i \in [0,1], \; \forall i \in V. \nonumber
\end{align}
Furthermore, we define: 
\begin{align*}
\QP(G) := \conv\Big\{z \in \R^{V\cup E \cup L}: \; & z_{ii} \geq z^2_i, \; \forall \{i, i\} \in L^+, \; z_{ii} \leq z^2_i, \; \forall \{i, i\} \in L^-, \; z_{ij} = z_i z_j, \\
&\forall \{i,j\} \in E, \;  z_i \in [0,1], \forall i \in V\Big\}.
\end{align*}
In this paper, we obtain sufficient conditions in terms of the structure of graph $G$, under which $\QP(G)$ admits a polynomial-size second-order cone (SOC) representable formulation, which can be constructed in polynomial time. The main contributions of this paper are as follows.

\medskip

\begin{enumerate}
    \item In~\cref{sec: convexification}, we propose a new technique to build SOC-representable convex relaxations for $\QP(G)$. This method can be considered as a generalization of the widely used Reformulation Linearization Technique (RLT)~\cite{SheAda90} to continuous quadratic programs. The proposed SOC-representable relaxation is not implied by the existing SDP relaxations of $\QP(G)$; \ie relaxations $\C^{\rm SDP}_n$, $\C^{\rm SDP+MC}_n$, and $\C^{\rm SDP+MC+Tri}_n$. 
\medskip

    \item In~\cref{sec: convexHull}, we investigate the tightness of the proposed relaxation. Let $G=(V,E,L)$ be a graph, and let $V^+$ be the subset of nodes of $G$ with plus loops.
    We prove that if $V^+$ is a \emph{stable set} of $G$, then 
    $\QP(G)$ is SOC-representable. This extended formulation is obtained by combining the proposed convexification technique with a decomposition argument. In particular, this result implies that if $G$ is a complete graph and has one plus loop, then $\QP(G)$ admits a SOC-representable formulation with $O(2^{|V|})$ variables and inequalities.
\medskip

    \item In~\cref{sec: polysize}, we obtain sufficient conditions under which the proposed extended formulation of~\cref{sec: convexHull} is of polynomial size; that is, it can be upper bounded by a polynomial in $|V|$. Roughly speaking, our main result requires that graph $G$ has a tree decomposition with a small width, such that each bag in the tree decomposition contains at most one node with a plus loop, and each node with a plus loop is contained in a small number of bags of the tree (see~\cref{polysize}). This result serves as a generalization of the bounded treewidth result for $\BQP(G)$ to the continuous setting. As corollaries of this result, we obtain generalizations of~\cref{padtree} and~\cref{padcycle} for the continuous case. 
\end{enumerate}

\medskip 

The remainder of this paper is structured as follows. In~\cref{sec: prelim}, we present the preliminary material that we need to obtain our main results. In~\cref{sec: convexification}, we describe our new convexification technique for nonconvex quadratic programs. In~\cref{sec: convexHull}, we examine the tightness of the proposed relaxation. In~\cref{sec: polysize}, we obtain sufficient conditions under which $\QP(G)$ admits a polynomial-size SOC-representable formulation. Finally, in~\cref{sec: conclude} we present some extensions and discuss directions of future research.

\section{Preliminaries}
\label{sec: prelim}

In this section, we present the preliminary material that we will need to prove our main results in subsequent sections.

\subsection{Extended formulations and the multilinear polytope}

In this paper, we derive SOC-representable extended formulations for $\QP(G)$. These extended formulations are obtained by introducing auxiliary variables that are products of more than two independent variables. To this end, we briefly recall the multilinear polytope and hypergraphs~\cite{dPKha17MOR}. 
A \emph{hypergraph} $G$ is a pair $(V,E)$, where $V$ is a finite set of nodes and $E$ is a set of subsets of $V$ of cardinality at least two, called the edges of $G$. 
With any hypergraph $G= (V,E)$, we associate the multilinear polytope $\MP(G)$ defined as:
\begin{equation*} 
 \MP(G):= \conv\Big\{ z \in \{0,1\}^{V \cup E} : z_e = \prod_{i \in e} {z_{i}}, \; \forall e \in E \Big\}.
 \end{equation*}
 We say that $G=(V,E)$ is a \emph{complete} hypergraph, if $E$ contains all subsets of $V$ of cardinality at least two. The multilinear polytope of a complete hypergraph has a simple structure (see the proof of Theorem~1 in~\cite{dPKha18MPA}):

\begin{lemma}[\cite{dPKha18MPA}]\label{lemma: simplex}
    Let $G$ be a complete hypergraph. Then the multilinear polytope $\MP(G)$ is a simplex.
\end{lemma}

 In~\cite{dPKha18SIOPT,dPKha23mMPA}, the authors give a complete characterization of the class of acyclic hypergraphs for which $\MP(G)$ admits a polynomial-size linear extended formulation. A key step in proving these results is to establish sufficient conditions for the \emph{decomposability} of the multilinear polytope; a concept that we define next. Given a hypergraph $G=(V,E)$ and $V' \subseteq V$, the \emph{section hypergraph} of $G$ induced by $V'$ is the hypergraph $G'=(V',E')$, where $E' = \{e \in E : e \subseteq V'\}$.
Given two hypergraphs $G_1=(V_1,E_1)$ and $G_2=(V_2,E_2)$, we denote by $G_1 \cap G_2$ the hypergraph $(V_1 \cap V_2, E_1 \cap E_2)$,
and by $G_1 \cup G_2$, the hypergraph $(V_1 \cup V_2, E_1 \cup E_2)$. 
In the following, we consider a hypergraph $G$, and two distinct section hypergraphs of $G$, denoted by $G_1$ and $G_2$, such that $G_1 \cup G_2 = G$.
We say that $\MP(G)$ is \emph{decomposable} into $\MP(G_1)$ and $\MP(G_2)$ if  the description of $\MP(G_1)$ and the description of $\MP(G_2)$, is the description of $\MP(G)$ without the need for any additional constraints.
The next proposition, which follows from~\cref{lemma: simplex},  provides a sufficient condition for the decomposability of the multilinear polytope.

\begin{proposition}[\cite{dPKha18MPA}]
\label{th decomposition}
Let $G_1$,$G_2$ be section hypergraphs of a hypergraph $G$ such that $G_1 \cup G_2 = G$ and $G_1 \cap G_2$ is a complete hypergraph.
Then, $\MP(G)$ is decomposable into $\MP(G_1)$ and $\MP(G_2)$.
\end{proposition}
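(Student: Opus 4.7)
The plan is to prove the nontrivial containment: every $z \in \R^{V \cup E}$ whose restrictions $z^1 := z|_{V_1 \cup E_1}$ and $z^2 := z|_{V_2 \cup E_2}$ lie in $\MP(G_1)$ and $\MP(G_2)$, respectively, must itself lie in $\MP(G)$. The reverse containment is immediate, since any point of $\MP(G)$ projects into each $\MP(G_i)$. Let $V_0 := V_1 \cap V_2$ and $E_0 := E_1 \cap E_2$; by hypothesis $E_0$ consists of every subset of $V_0$ of cardinality at least two, and the vertices of $\MP(G_1 \cap G_2)$ are the $2^{|V_0|}$ lifted indicators $\chi_s \in \{0,1\}^{V_0 \cup E_0}$, one per trace $s \in \{0,1\}^{V_0}$.

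First I would disaggregate by the trace on $V_0$. Write $z^1 = \sum_k \alpha_k v_k^1$ and $z^2 = \sum_\ell \beta_\ell v_\ell^2$ as convex combinations of vertices of $\MP(G_1)$ and $\MP(G_2)$. Each such vertex has a well-defined $0/1$ trace on $V_0$, so I would group the sums accordingly and set $\lambda^1_s := \sum_{k:\, v_k^1|_{V_0}=s} \alpha_k$ and $\lambda^2_s := \sum_{\ell:\, v_\ell^2|_{V_0}=s} \beta_\ell$. Projecting further to $V_0 \cup E_0$ yields two convex decompositions of the single point $z|_{V_0 \cup E_0}$ as $\sum_s \lambda^1_s \chi_s$ and $\sum_s \lambda^2_s \chi_s$. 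The $\chi_s$ are affinely independent in $\R^{V_0 \cup E_0}$, since the monomial lift from $\{0,1\}^{V_0}$ to $\R^{V_0 \cup E_0}$ produces linearly independent images. Hence the coefficients coincide: $\lambda^1_s = \lambda^2_s =: \lambda_s$ for every $s$.

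Next I would couple the two conditional decompositions independently inside each trace slice. For every $s$ with $\lambda_s > 0$, let $\mu^1_{s,k} := \alpha_k / \lambda_s$ and $\mu^2_{s,\ell} := \beta_\ell / \lambda_s$ be the conditional weights on the vertices with trace $s$. Since $v_k^1$ and $v_\ell^2$ then both equal $s$ on $V_0$, their union defines a $0/1$ assignment on $V = V_1 \cup V_2$, and augmenting it by the induced products on $E = E_1 \cup E_2$ (every edge of $G$ lies entirely in $V_1$ or $V_2$, where the relevant product coordinate already appears in $v_k^1$ or $v_\ell^2$) produces a vertex $v_{s,(k,\ell)}$ of $\MP(G)$. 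I would then form the convex combination $\sum_s \lambda_s \sum_{k,\ell} \mu^1_{s,k}\mu^2_{s,\ell}\, v_{s,(k,\ell)}$ and verify the marginals: summing over $\ell$ reproduces $z^1$ on $V_1 \cup E_1$, and summing over $k$ reproduces $z^2$ on $V_2 \cup E_2$, certifying $z \in \MP(G)$.

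The main obstacle is the weight-matching step, and this is precisely where completeness of $G_1 \cap G_2$ is indispensable: it forces $E_0$ to contain every subset of $V_0$, which makes the trace $s$ recoverable from the projection of a vertex onto $V_0 \cup E_0$ and underlies the affine independence of the $\chi_s$. Without completeness, distinct traces could share projections and the two decompositions could distribute their trace mass incompatibly, blocking any consistent coupling into vertices of $\MP(G)$.
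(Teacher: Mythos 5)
Your argument is correct and follows essentially the same route as the paper's (sketched) proof: the paper reduces decomposability to the fact that $\MP(G_1\cap G_2)$ is a simplex, so that $z^\cap$ has a unique representation as a convex combination of its extreme points, which is exactly your affine-independence/weight-matching step, and your product coupling within each trace slice is the standard gluing that the cited reference carries out. You have simply written out in full the details that the paper delegates to~\cite{dPKha18MPA}.
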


 With a slight abuse of notation, in the following, we sometimes replace $z_{\{i,j\}}$ by $z_{ij}$. 
 In our setting, in addition to the multilinear equations $z_e = \prod_{i \in e}{z_i}$ for all $e \in E$, we also have relations of the form $z_{ii} \geq z^2_i$ or $z_{ii} \leq z^2_i$. To take these relations into account, we define hypergraphs with loops; \ie $G=(V,E,L)$, where $V, E$ are the same as those defined for loopless hypergraphs and where $L$ denotes the set of loops of $G$. As before, we partition $L$ as $L = L^- \cup L^+$, where $L^-$ and $ L^+$ denote the sets of minus loops and plus loops of $G$, respectively. With any hypergraph $G= (V,E,L)$, we associate the convex set $\PP(G)$ defined as:
\begin{align} \label{extset}
 \PP(G):= \conv\Big\{ z \in \R^{V \cup E \cup L} : \; & z_{ii} \geq z^2_i, \; \forall \{i, i\} \in L^+, \; z_{ii} \leq z^2_i, \; \forall \{i, i\} \in L^-,\; z_e = \prod_{i \in e} {z_i}, \; \forall e \in E, \nonumber\\
 & z_i \in [0,1], \; \forall i \in V \Big\}.
\end{align}
While the convex hull of an unbounded set is not closed, in general, the following lemma indicates that $\PP(G)$ is a closed set.

\begin{lemma}\label{lemma:closed}
The convex set $\PP(G)$ is closed. 
\end{lemma}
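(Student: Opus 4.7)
The plan is to exhibit $\PP(G)$ as the Minkowski sum of a compact convex set and a closed convex cone, since any such sum is closed in finite dimension.

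First I would introduce the compact base
$$
T := \Bigl\{ z \in \R^{V \cup E \cup L} : z_i \in [0,1]\; \forall i \in V,\; z_e = \prod_{i\in e} z_i\; \forall e \in E,\; z_{ii} = z_i^2\; \forall \{i,i\}\in L \Bigr\},
$$
which is the image of the compact cube $[0,1]^V$ under a continuous map and is therefore compact. Next I would define the recession direction cone
$$
C := \Bigl\{ c \in \R^{V \cup E \cup L} : c_i = 0\; \forall i \in V,\; c_e = 0\; \forall e \in E,\; c_{ii} \geq 0\; \forall \{i,i\}\in L^+,\; c_{ii}\leq 0\; \forall \{i,i\}\in L^-\Bigr\},
$$
which is clearly a closed convex cone.

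Let $S$ denote the (generally nonconvex) set whose convex hull defines $\PP(G)$. I would then verify the decomposition $S = T + C$: for any $z \in S$, the point $t$ obtained by replacing each $z_{ii}$ by $z_i^2$ lies in $T$, and the difference $z - t$ lies in $C$ by the inequalities on the loop coordinates; conversely, adding any $c \in C$ to a point of $T$ preserves all required inequalities. Taking convex hulls, I would show $\conv(S) = \conv(T) + C$. The nontrivial inclusion $\conv(T) + C \subseteq \conv(S)$ follows from the fact that $C$ is a cone: given $\sum_k \lambda_k t_k + c$ with $t_k \in T$, $\lambda_k \geq 0$, $\sum_k \lambda_k = 1$, and $c \in C$, we can rewrite it as $\sum_k \lambda_k (t_k + c)$, and each $t_k + c$ lies in $S$.

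Finally I would invoke the standard fact that $\conv(T)$ is compact (convex hull of a compact set in finite dimension) and close the argument: if $k_n + c_n \to z$ with $k_n \in \conv(T)$, then along a subsequence $k_n \to k \in \conv(T)$, so $c_n \to z - k$, and closedness of $C$ gives $z - k \in C$, hence $z \in \conv(T) + C = \PP(G)$. The only subtle point is the equality $\conv(S) = \conv(T) + C$, and that is handled cleanly by the coning trick in the previous paragraph; the rest is bookkeeping.
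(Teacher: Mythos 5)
Your proof is correct and follows essentially the same route as the paper: both decompose the underlying nonconvex set as the Minkowski sum of the compact ``equality'' piece (your $T$, the paper's $\S_0$) and the closed convex cone of loop directions (your $C$, the paper's $\S_\infty$), pass to convex hulls, and conclude that a compact convex set plus a closed convex cone is closed. You merely spell out two steps the paper cites as standard facts, namely that convex hull commutes with Minkowski sum and that the resulting sum is closed.
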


\begin{proof}
Define the sets
\begin{align*}
     \S = \Big\{ z \in \R^{V \cup E \cup L} : & \; z_{ii} \geq z^2_i, \; \forall \{i, i\} \in L^+, \; z_{ii} \leq z^2_i, \; \forall \{i, i\} \in L^-,\; z_e = \prod_{i \in e} {z_i}, \; \forall e \in E, \\ 
     & z_i \in [0,1], \; \forall i \in V \Big\},\\
     \S_0 = \Big\{ z \in \R^{V \cup E \cup L} : & \; z_{ii} = z^2_i, \; \forall \{i, i\} \in L,\; z_e = \prod_{i \in e} {z_i}, \; \forall e \in E, \; 
     z_i \in [0,1], \; \forall i \in V \Big\},\\
     \S_\infty =\Big\{z \in \R^{V \cup E \cup L} : & \; z_{ii} \geq 0, \; \forall \{i,i\} \in L^+, \; z_{ii} \leq 0, \; \forall \{i,i\} \in L^-, \; z_p = 0, \; \forall p \in V \cup E\Big\}.
\end{align*}
We then have $\S = \S_0 \oplus \S_{\infty}$, where $\oplus$ denotes the Minkowski sum of sets. Since the operations of taking the convex hull and the Minkowski sum commute, it follows that $\PP(G)=\conv(\S) = \conv(\S_0) \oplus \S_{\infty}$. Moreover, $\conv(\S_0)$ is a compact convex set and $\S_{\infty}$ is a closed convex cone. This in turn implies that $\PP(G)$ is closed (see corollary~9.1.2 in~\cite{Roc70}).
\end{proof}

The next lemma characterizes the set of extreme points of $\PP(G)$. This result enables us to obtain our extended formulations. In the following, given a set $\C$, we denote by $\bar \C$ the closure of $\C$. Moreover, we denote by ${\rm ext}(\C)$ and ${\rm exp}(\C)$ the set of extreme points and the set of exposed points of $\C$, respectively.

\begin{lemma}\label{extp}
Let $G=(V,E,L)$ be a hypergraph. Then the set
    \begin{align*}
        \Q = \Big\{z \in \R^{V\cup E \cup L}: \; &z_i \in [0,1], \; \forall i \in V: \{i,i\} \in L^+, z_i \in \{0,1\}, \; \forall i \in V: \{i,i\} \notin L^+, \\
        & z_{p} = \prod_{i \in p}{z_i}, \; \forall p \in E \cup L \Big\}, \nonumber
    \end{align*}
    is the set of extreme points of $\PP(G)$.
\end{lemma}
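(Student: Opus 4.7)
The plan is to prove the two inclusions $\Q \subseteq \mathrm{ext}(\PP(G))$ and $\mathrm{ext}(\PP(G)) \subseteq \Q$ separately. The forward direction is a tightness argument using Jensen's inequality for $x^2$; the reverse direction is handled most cleanly by first showing $\PP(G) = \conv(\Q) + \S_\infty$ and then combining the pointedness of $\S_\infty$ with the Krein--Milman theorem.

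For the forward inclusion, take $\bar z \in \Q$ and a decomposition $\bar z = \lambda z^1 + (1-\lambda) z^2$ with $z^1, z^2 \in \PP(G)$ and $\lambda \in (0,1)$. The node variables must agree on $z^1$ and $z^2$: for any non-plus-loop node $i$, $\bar z_i \in \{0,1\}$ is already extreme in $[0,1]$; for any plus-loop node $i$, the chain
\begin{equation*}
\bar z_i^2 = \bar z_{ii} = \lambda z^1_{ii} + (1-\lambda) z^2_{ii} \geq \lambda (z^1_i)^2 + (1-\lambda) (z^2_i)^2 \geq \bar z_i^2
\end{equation*}
must be tight throughout, and strict convexity of $x^2$ forces $z^1_i = z^2_i = \bar z_i$. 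A parallel computation at minus loops, using the linear inequality $z_{ii} \leq z_i$ (valid on $\S$ and hence on $\PP(G)$ whenever $\{i,i\} \in L^-$), pins the remaining loop coordinates. For the edge coordinates I would write $z^j = \sum_k \mu_k z^{(k)}$ with $z^{(k)} \in \S$ via Carath\'eodory and rerun the Jensen argument inside this representation: each $z^{(k)}_i$ on the support must coincide with $\bar z_i$, so $z^{(k)}_e = \prod_{i \in e} \bar z_i = \bar z_e$, giving $z^j_e = \bar z_e$ and hence $z^1 = z^2 = \bar z$.

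For the reverse inclusion, I would establish $\PP(G) = \conv(\Q) + \S_\infty$; combined with the identity $\PP(G) = \conv(\S_0) + \S_\infty$ already proved for the previous lemma, it suffices to show $\S_0 \subseteq \conv(\Q) + \S_\infty$. Given $z \in \S_0$ with a non-plus-loop node $i$ at which $z_i = t \in (0,1)$, form $z^0, z^1 \in \S_0$ by replacing $z_i$ with $0$ and $1$ respectively and propagating the change through the incident edges and the loop at $i$; then $z = (1-t) z^0 + t z^1 + v$, where $v$ is supported on the loop at $i$ with $v_{ii} = t^2 - t \leq 0$. This $v$ belongs to $\S_\infty$ when $i$ has a minus loop, while $v = 0$ if $i$ has no loop at all (the plus-loop case is excluded by choice of $i$). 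Iterating over all fractional non-plus-loop coordinates places the endpoints of the convex combination in $\Q$. Since $\S_\infty$ is a pointed polyhedral cone (a product of halflines supported only on loop coordinates) and $\Q$ is compact, any extreme point of $\PP(G) = \conv(\Q) + \S_\infty$ must have zero cone component and therefore lie in $\conv(\Q)$; Krein--Milman then places it in $\Q$. I expect the main obstacle to be the bookkeeping of this inductive decomposition of $\S_0$: verifying that the edge values transform so as to keep $z^0, z^1$ in $\S_0$, and checking that the loop discrepancy $t^2 - t$ always has the sign required for membership in $\S_\infty$; both ultimately reduce to elementary identities on $[0,1]$.
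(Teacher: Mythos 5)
Your argument is correct, but it follows a genuinely different route from the paper's on both inclusions. For $\Q \subseteq \mathrm{ext}(\PP(G))$, the paper does not run a convex-combination/Jensen argument: for each $\bar z \in \Q$ it writes down an explicit linear functional, namely $\sum_{i:\{i,i\}\notin L^+}(1-2\bar z_i)z_i+\sum_{\{i,i\}\in L^-}(1-2\bar z_{ii})z_{ii}+\sum_{i:\{i,i\}\in L^+}(z_{ii}-2\bar z_i z_i)$, and verifies (after replacing $z_{ii}$ by $z_i^2$ via the constraint $z_{ii}\ge z_i^2$, which turns the last sum into $\sum (z_i-\bar z_i)^2$) that $\bar z$ is its unique minimizer over $\PP(G)$. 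For $\mathrm{ext}(\PP(G)) \subseteq \Q$, the paper fixes the plus-loop node variables at arbitrary values in $[0,1]$ and invokes the external fact that strictly concave and multilinear functions attain their minima over the hypercube at vertices; you instead establish the structural identity $\PP(G)=\conv(\Q)+\S_\infty$ (building on the decomposition $\PP(G)=\conv(\S_0)\oplus\S_\infty$ from the closedness lemma), use pointedness of $\S_\infty$ to kill the cone component of any extreme point, and finish with Milman's theorem. Your tightness chain for plus loops, the use of the linear consequence $z_{ii}\le z_i$ at minus loops, and the Carath\'eodory step for the edge coordinates (which relies on $\PP(G)=\conv(\S)$ being an honest convex hull, as the closedness lemma guarantees) are all sound, as is the inductive splitting of a fractional non-plus-loop coordinate with discrepancy $t^2-t\le 0$ absorbed into $\S_\infty$. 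What the paper's route buys is brevity and a uniform treatment via supporting functionals; what yours buys is self-containedness -- it avoids the cited result on minimizers over the cube -- at the cost of the decomposition bookkeeping you correctly identify as the delicate part.
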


\begin{proof}
In the following we prove that $\Q$ is the set of exposed points of $\PP(G)$. 
By~\cref{lemma:closed}, $\PP(G)$ is a closed convex set; hence, by an application of Straszewicz's Theorem (see theorem 18.6 in~\cite{Roc70}), we get ${\rm ext}(\PP(G)) = \overline{{\rm exp(\PP(G))}} = \bar \Q = \Q = {\rm exp(\PP(G))} \subseteq {\rm ext}(\PP(G))$, where the third equality follows since $\Q$ is a closed set. Therefore, the equality holds throughout and we conclude that ${\rm ext}(\PP(G))= {\rm exp}(\PP(G))$.

 We now prove that $\Q={\rm exp(\PP(G))}$.  First, fix each $z_i$, $i \in V$ with $\{i,i\} \in L^+$ to any value in $[0,1]$. It then follows that a minimizer of any linear function over the resulting set is attained at a binary point. This is because a minimizer of a concave function or a multilinear function over the unit hypercube is attained at a binary point~\cite{tardella1988class}. This shows that ${\rm exp(\PP(G))} \subseteq \Q$.

   Next, we show that any point $\bar z \in \Q$ is an exposed point of $\PP(G)$. To this end, it suffices to construct a linear function whose unique minimizer is attained at $\bar z$.
   Define $V^-:=\{i \in V: \{i,i\} \in L^-\}$ and $V^+:=\{i \in V: \{i,i\} \in L^+\}$. Consider the linear function
   $$
   f= \sum_{i \in V^+}{\big(z_{ii}-2\bar z_i z_i\big)}- \sum_{i \in V^-}{\big(z_{ii}+(\bar z_i-\frac{3}{2})z_i\big)}+\sum_{i \in V \setminus (V^- \cup V^+)}{(1-2 \bar z_i)z_i}.
   $$
   We show that $\bar z$ is the unique minimizer of $f$ over $\PP(G)$.  Notice that, the minimum of $f$ over $\PP(G)$ is attained because the coefficients of all $z_{ii}$, $\{i,i\} \in L^+$ (resp. $\{i,i\} \in L^-$) are positive (resp. negative).
   It then follows that minimizing $f$ over $\PP(G)$
is equivalent to minimizing the following (quadratic) function over $\PP(G)$: 
   $$
   \tilde f= \sum_{i \in V^+}{(z_{i}-\bar z_i)^2}- \sum_{i \in V^-}{\big(z^2_i+(\bar z_i-\frac{3}{2})z_i\big)}+\sum_{i \in V \setminus (V^- \cup V^+)}{(1-2 \bar z_i)z_i}.
   $$
   First consider the first term in $\tilde f$. Clearly, the unique minimizer of this expression is attained at $z_i = \bar z_i$ for all $i \in V^+$. Next, consider the second term in $\tilde f$ for some $i \in V^-$. If $\bar z_i = 0$, then this term simplifies to $\frac{3}{2}z_i - z^2_i$, whose unique minimizer is attained at $z_i = 0$.   If $\bar z_i = 1$, then this term simplifies to $\frac{1}{2}z_i - z^2_i$, whose unique minimizer is attained at $z_i = 1$. Therefore, the unique minimizer of the second term is obtained at $z_i = \bar z_i$ for all $i \in V^-$. Finally, consider the last term for some $i \in V \setminus (V^- \cup V^+)$. If $\bar z_i = 0$, then this term simplifies to  $z_i$ and it unique minimizer is attained at  $z_i = 0$. If $\bar z_i = 1$, then this term simplifies to $-z_i$ and its unique minimizer is attained at $z_i = 1$. Therefore, the unique minimizer of the third term is attained at $z_i = \bar z_i$ for all $i \in V \setminus (V^- \cup V^+)$.

   Thus far, we have proved that at any minimizer of $f$ over $\PP(G)$ we have $z_i = \bar z_i$ for all $i \in V$ and $z_{ii} = z^2_i = \bar z^2_i = \bar z_{ii}$ for all $\{i,i\} \in L$. Moreover, from the definition of $\PP(G)$ it follows that at a minimizer we have $z_e = \prod_{i \in e} {z_i} = \prod_{i \in e} {\bar z_i} = \bar z_e$. Therefore, the unique minimizer of $f$ over $\PP(G)$ is attained at $\bar z$, implying that $\Q \subseteq {\rm exp}(\PP(G))$. Therefore, $\Q = {\rm exp}(\PP(G))$ and this completes the proof.    
   
\end{proof}

The next lemma enables us to obtain an extended formulation for $\QP(G)$ by obtaining the description of $\PP(G')$ for certain hypergraphs $G'$.

\begin{lemma}\label{obsxxx}
      Consider two hypergraphs $G_1=(V,E_1, L)$ and $G_2 = (V, E_2, L)$ such that $E_1 \subseteq E_2$. Then the formulation for $\PP(G_2)$ is an extended formulation for $\PP(G_1)$. 
\end{lemma}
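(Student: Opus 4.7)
The plan is to prove $\PP(G_1) = \proj_{V \cup E_1 \cup L}(\PP(G_2))$ directly, which by definition makes any formulation of $\PP(G_2)$ an extended formulation of $\PP(G_1)$. Let $\S_i$ denote the non-convex set whose convex hull defines $\PP(G_i)$ via~\eqref{extset}. The key observation is that the defining constraints of $\S_1$ are a subset of those of $\S_2$ once the extra coordinates indexed by $E_2 \setminus E_1$ are projected out.

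First I would verify, at the level of the non-convex defining sets, that $\S_1 = \proj_{V \cup E_1 \cup L}(\S_2)$. For the inclusion $\proj_{V \cup E_1 \cup L}(\S_2) \subseteq \S_1$, note that any $z \in \S_2$ satisfies the loop constraints, the box constraints $z_i \in [0,1]$, and in particular the product constraints $z_e = \prod_{i \in e} z_i$ for every $e \in E_1 \subseteq E_2$; discarding the coordinates in $E_2 \setminus E_1$ therefore yields a point of $\S_1$. For the reverse inclusion, given $z \in \S_1$ I would lift it to $\S_2$ by defining $z_e := \prod_{i \in e} z_i$ for each $e \in E_2 \setminus E_1$; the lifted point trivially satisfies the additional product constraints of $\S_2$ and agrees with $z$ on the original coordinates.

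Then I would invoke the standard fact that a linear projection commutes with taking the convex hull, i.e., $\proj(\conv(\S)) = \conv(\proj(\S))$ for any set $\S$, which follows immediately from the fact that linear maps preserve convex combinations. Combining this with the previous step yields
\[
\PP(G_1) = \conv(\S_1) = \conv\pare{\proj_{V \cup E_1 \cup L}(\S_2)} = \proj_{V \cup E_1 \cup L}\pare{\conv(\S_2)} = \proj_{V \cup E_1 \cup L}(\PP(G_2)),
\]
as desired. The proof is essentially bookkeeping on the defining constraints, so there is no serious obstacle; the only mild subtlety worth noting is that both $\S_1$ and $\S_2$ are unbounded, but this causes no trouble since the commutativity of projection with convex hull is independent of boundedness. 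In particular, neither the extreme-point characterization of~\cref{extp} nor the preceding closedness lemma is needed here.
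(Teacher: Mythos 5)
Your proof is correct, and it takes a cleaner route than the paper's. The paper argues through \cref{extp}: it observes that projecting the extreme points of $\PP(G_2)$ yields exactly the extreme points of $\PP(G_1)$, and then invokes the commutativity of convex hull and projection. You instead establish the identity $\S_1 = \proj_{V\cup E_1\cup L}(\S_2)$ at the level of the underlying nonconvex sets (the lifting $z_e := \prod_{i\in e} z_i$ for $e \in E_2\setminus E_1$ handles one inclusion, and constraint containment handles the other) and then apply the same commutativity fact. Your version is more elementary and, in one respect, more robust: since $\PP(G)$ is unbounded, it is not the convex hull of its extreme points alone (one must also account for the recession cone generated by the loop variables), so the paper's extreme-point argument implicitly leaves a small amount of bookkeeping to the reader that your argument avoids entirely. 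What the paper's route buys in exchange is that \cref{extp} is already needed elsewhere (e.g., in \cref{minusloops} and \cref{cor: decomp}), so reusing it keeps the exposition uniform. Your closing remark that neither \cref{extp} nor the closedness lemma is required is accurate, since $T(\conv(\S)) = \conv(T(\S))$ holds for any linear map $T$ and any set $\S$, bounded or not.
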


\begin{proof}
From~\cref{extp} it follows that the projection of every extreme point of $\PP(G_2)$ onto the space of $\PP(G_1)$ is an extreme point of $\PP(G_1)$ and every extreme point of $\PP(G_1)$ is obtained this way. Since the operations of taking the convex hull and projection commute, it follows that $\PP(G_1)$ is obtained by projecting out variables $z_e$, $e \in E_2 \setminus E_1$ from the description of $\PP(G_2)$.
\end{proof}

Next, we make use of~\cref{th decomposition} to obtain two similar decomposability results for $\PP(G)$. Together with~\cref{obsxxx}, these results enable us to obtain SOC-representable formulations for $\QP(G)$. 
The proof of~\cref{th decomposition}  makes use of~\cref{lemma: simplex}.
More precisely, if we denote by $z^\cap$ the variables corresponding to nodes and edges both in $G_1$ and in $G_2$, by $z^1$ the variables corresponding to nodes and edges in $G_1$ and not in $G_2$, and by $z^2$ the variables corresponding to nodes and edges in $G_2$ and not in $G_1$, then $\MP(G)$ is decomposable into $\MP(G_1)$ and $\MP(G_2)$ if and only if
\begin{align} 
\label{eq def decomposability}
 (z^1,z^\cap) \in \MP(G_1), \ (z^\cap,z^2) \in 
 \MP(G_2)
 \quad \Rightarrow \quad 
 (z^1,z^\cap,z^2) \in \MP(G).
 \end{align}
Since $\MP(G_1 \cap G_2)$ is a simplex, it follows that $z^\cap$ can be uniquely written as a convex combination of the extreme points of $\MP(G_1 \cap G_2)$. Therefore, condition~\eqref{eq def decomposability} is satisfied.

Our first decomposability result indicates that, to study the facial structure of $\PP(G)$ or $\QP(G)$, we can essentially discard the minus loops. 

\begin{lemma}\label{minusloops}
Consider a hypergraph $G=(V,E, L)$, where $L = L^- \cup L^+$. Define a hypergraph $G'$ obtained from $G$ by removing all minus loops; \ie $G' = (V, E, L^+)$. Then the formulation for $\PP(G)$ is obtained by putting together the formulation for $\PP(G')$ with the following linear inequalities:  
\begin{equation}\label{harmless}
    z_{ii} \leq z_i, \; z_i \in [0,1], \; \forall \{i, i\} \in L^-.
\end{equation}
\end{lemma}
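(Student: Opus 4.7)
The plan is to establish both inclusions of the asserted equality between $\PP(G)$ and the set described by a formulation of $\PP(G')$ together with the inequalities $z_{ii} \leq z_i$, $z_i \in [0,1]$ for $\{i,i\}\in L^-$.

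For the inclusion $\PP(G) \subseteq \{\cdots\}$, I would invoke the Minkowski-sum decomposition $\PP(G) = \conv(\S_0) \oplus \S_\infty$ established in the preceding lemma. On $\S_0$ one has $z_{ii} = z_i^2 \leq z_i$ for each $\{i,i\}\in L^-$ because $z_i \in [0,1]$, and this inequality is linear in the lifted variables, so it survives convex combinations. On $\S_\infty$ the coordinates $z_i$ vanish while $z_{ii}\leq 0$, so $z_{ii}\leq z_i$ holds there too. Adding gives the inequality throughout $\PP(G)$. That the projection of $\PP(G)$ onto the coordinates indexed by $V \cup E \cup L^+$ is contained in $\PP(G')$ follows because convex hull commutes with linear projection.

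For the reverse inclusion, I would take any $z^*$ satisfying the proposed description and set $\bar z^* := z^*|_{V \cup E \cup L^+} \in \PP(G')$. Using the analogous decomposition for $\PP(G')$ together with \cref{extp}, write $\bar z^* = \sum_k \mu_k \bar w^k + \bar d$, where the $\bar w^k$ are extreme points of $\PP(G')$ and $\bar d$ lies in the recession cone of $\PP(G')$. Because $\{i,i\}\in L^-$ means $i$ carries no plus loop in $G'$, \cref{extp} forces $\bar w^k_i \in \{0,1\}$ for every $k$. I would then lift each $\bar w^k$ to $w^k \in \R^{V \cup E \cup L}$ by setting $w^k_{ii} := \bar w^k_i = (\bar w^k_i)^2$ for every $\{i,i\}\in L^-$; by \cref{extp}, $w^k$ is an extreme point of $\PP(G)$. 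Extending $\bar d$ by zero in the new coordinates gives a vector $\tilde d$ that still lies in the recession cone $\S_\infty$ of $\PP(G)$, and the combination $z^\circ := \sum_k \mu_k w^k + \tilde d$ lies in $\PP(G)$, agrees with $z^*$ on all coordinates outside the $L^-$ block, and has $z^\circ_{ii} = \bar z^*_i = z^*_i$ for $\{i,i\}\in L^-$. Since $z^*_{ii} \leq z^*_i$, I would finally subtract the nonnegative slack $z^*_i - z^*_{ii}$ from coordinate $z_{ii}$ using the recession direction $-e_{ii}$, which belongs to $\S_\infty$ precisely because $\{i,i\}\in L^-$. Applying this correction for each minus loop yields $z^* \in \PP(G)$.

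The main technical point is the simultaneous verification that (i) the lifted vector $w^k$ is indeed an extreme point of $\PP(G)$ (which follows from \cref{extp}, using $(\bar w^k_i)^2 = \bar w^k_i$ for $\bar w^k_i \in \{0,1\}$), and (ii) the direction $-e_{ii}$ lies in the recession cone $\S_\infty$ of $\PP(G)$. Both of these steps rely on the hypothesis $\{i,i\}\in L^-$ rather than $L^+$, which is precisely what makes the linear inequality $z_{ii}\leq z_i$ sufficient to encode the minus-loop constraint in the convex hull; no analogous statement holds for plus loops.
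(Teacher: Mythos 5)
Your proof is correct, but it takes a more explicit route than the paper. The paper's argument peels off one minus loop $\{j,j\}$ at a time: it isolates the two-variable set $T_j=\{(z_j,z_{jj}): z_{jj}\leq z_j,\ z_j\in[0,1]\}$, observes that $z_j$ is the only variable shared with $\PP(\tilde G)$ and that $z_j$ is binary at the extreme points of both sets (via \cref{extp}), and then concludes by the same decomposability principle as \cref{cor: decomp} that the two formulations may simply be concatenated; recursion over $L^-$ finishes the proof. You instead prove the two inclusions directly: validity of $z_{ii}\leq z_i$ via the Minkowski decomposition $\conv(\S_0)\oplus\S_\infty$, and the converse by writing a point of the candidate description as a convex combination of extreme points of $\PP(G')$ plus a recession direction, lifting each extreme point using the fact that minus-loop nodes are binary there (again \cref{extp}), and absorbing the slack $z_i^*-z_{ii}^*$ along the recession direction $-e_{ii}\in\S_\infty$. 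In effect you are supplying an explicit proof of the decomposability step that the paper asserts in one line; this makes the handling of the unbounded $z_{ii}$ coordinates completely transparent (the paper's appeal to \cref{th decomposition}, which is stated for polytopes, is being stretched slightly to an unbounded setting), at the cost of invoking the representation of a line-free closed convex set as the convex hull of its extreme points plus its recession cone. Both arguments rest on the same two pillars, namely \cref{extp} and the closedness/recession structure of $\PP(\cdot)$, so neither buys additional generality; yours is longer but more self-contained, the paper's is shorter but leans on machinery developed for the bounded multilinear case.
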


\begin{proof}
Let $j \in V$ such that $\{j, j\} \in L^-$. Define the hypergraph $\tilde G:= (V,E, L \setminus \{j, j\})$. We show that the formulation for $\PP(G)$ is obtained by combining the formulation for $\PP(\tilde G)$ together with linear inequalities $z_{jj} \leq z_{j}$, $0 \leq z_{j} \leq 1$. The proof then follows from a recursive application of this argument.  Define the set $T_{j}=\conv\{(z_j, z_{jj}): z_{jj} \leq z^2_{j}, \; z_{j} \in [0,1]\} = \{(z_{j}, z_{jj}): z_{jj} \leq z_{j}, \; z_{j} \in [0,1]\}$.
Clearly, at an extreme point of $T_{j}$ we have $z_{j} \in \{0,1\}$. By~\cref{extp}, at any extreme point of $\PP(\tilde G)$ and $\PP(G)$ we also have $z_{j} \in \{0,1\}$. Moreover, $z_{j}$ is the only common variable between the two sets $T_{j}$ and $\PP(\tilde G)$. Therefore, the formulation for $\PP(G)$ is obtained by putting together the formulation for $\PP(\tilde G)$ and the formulation for $T_{j}$, and this completes the proof.
\end{proof}

If $L^+ = \emptyset$, then from~\cref{extp} it follows that $\QP(G)$ is a polytope. Moreover, by~\cref{minusloops}, the description of $\QP(G)$ is obtained by putting together the description of the Boolean quadric polytope $\BQP(G')$ together with inequalities~\eqref{harmless}.

Next, we obtain a rather straightforward generalization of~\cref{th decomposition} to the case where hypergraphs $G, G_1, G_2$ also have loops. To this end, we modify the definition of section hypergraph as follows. Given a hypergraph $G=(V,E, L)$, and $V' \subseteq V$, the section hypergraph of $G$ induced by $V'$ is the hypergraph $G'=(V',E', L')$, where $E' = \{e \in E : e \subseteq V'\}$ and $L'=\{\{i,i\} \in L: i \in V'\}$. Moreover, if a loop is a plus (resp. minus) loop in $G$, it is also a plus (resp. minus) loop in $G'$. Given hypergraphs $G_1=(V_1,E_1,L_1)$ and $G_2=(V_2,E_2,L_2)$, we denote by $G_1 \cap G_2$ the hypergraph $(V_1 \cap V_2, E_1 \cap E_2, L_1 \cap L_2)$,
and we denote by $G_1 \cup G_2$, the hypergraph $(V_1 \cup V_2, E_1 \cup E_2, L_1 \cup L_2)$. 
We say that $\PP(G)$ is \emph{decomposable} into $\PP(G_1)$ and $\PP(G_2)$ if the system comprising the description of $\PP(G_1)$ and the description of $\PP(G_2)$, is the description of $\PP(G)$. 
Then the following decomposability result is a direct consequence of~\cref{minusloops} and~\cref{th decomposition}.

\begin{corollary}\label{cor: decomp}
Let $G=(V,E,L)$ be a hypergraph.
Let $G_1$,$G_2$ be section hypergraphs of $G$ such that $G_1 \cup G_2 = G$ and $G_1 \cap G_2$ is a complete hypergraph. Moreover, suppose that the hypergraph $G_1 \cap G_2$ has no plus loops. 
Then, $\PP(G)$ is decomposable into $\PP(G_1)$ and $\PP(G_2)$.    
\end{corollary}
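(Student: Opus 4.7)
My plan is to adapt the simplex-based decomposability argument recalled just before \cref{cor: decomp} (the argument underlying \cref{th decomposition}), while carefully handling the unboundedness that loops introduce. Set $V_\cap := V_1 \cap V_2$, $E_\cap := E_1 \cap E_2$, $L_\cap := L_1 \cap L_2$, and let $G_\cap := G_1 \cap G_2$. The first step is a structural observation: by the definition of section hypergraph, every loop incident to a node of $V_\cap$ belongs to $L_\cap$; since $L_\cap$ contains no plus loops by hypothesis, no node of $V_\cap$ carries a plus loop in $G$, i.e.\ $V_\cap \cap V^+ = \emptyset$. Combined with \cref{extp}, this implies that at every extreme point of $\PP(G)$, $\PP(G_1)$, $\PP(G_2)$, or $\PP(G_\cap)$, the $V_\cap$-coordinates are $\{0,1\}$-valued, and the projection of any extreme point of $\PP(G_1)$ or $\PP(G_2)$ onto the $z^\cap$ coordinates is already an extreme point of $\PP(G_\cap)$. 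I would index these latter extreme points by $\sigma \in \{0,1\}^{V_\cap}$ and denote them $y^\sigma$.

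The nontrivial direction of decomposability is: if $(z^1,z^\cap)\in \PP(G_1)$ and $(z^\cap,z^2)\in \PP(G_2)$, then $(z^1,z^\cap,z^2)\in \PP(G)$. I would write Minkowski--Weyl decompositions $(z^1,z^\cap)=\sum_k \lambda^1_k\,\xi^{1,k}+r^1$ and $(z^\cap,z^2)=\sum_l \lambda^2_l\,\xi^{2,l}+r^2$ into extreme points plus recession directions, and then match them on the shared $z^\cap$ part. The required uniqueness inside $\PP(G_\cap)$ comes from the simplex argument behind \cref{th decomposition}: projecting $z^\cap$ further onto just the $V_\cap$-node and $E_\cap$-edge coordinates lands in $\MP$ of the complete loopless hypergraph on $V_\cap$, which is a simplex. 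Consequently the aggregated weights $\mu_\sigma$, defined as the total $\lambda^1_k$-mass on extreme points $\xi^{1,k}$ whose $z^\cap$-projection equals $y^\sigma$, are determined by $z^\cap$ alone and coincide with the analogous masses coming from the $G_2$-side. The remaining shared loop coordinates $z^\cap_{ii}$, $\{i,i\}\in L_\cap$, are all minus loops by hypothesis, and their recession contributions in both decompositions are forced by $z^\cap$ to equal $z^\cap_{ii}-z^\cap_i \le 0$.

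Next I would group and glue. Write $\sum_k\lambda^1_k \xi^{1,k}=\sum_\sigma \mu_\sigma\,\hat\xi^{1,\sigma}$ with $\hat\xi^{1,\sigma}\in \PP(G_1)$ having $z^\cap$-part equal to $y^\sigma$, and similarly build $\hat\xi^{2,\sigma}\in \PP(G_2)$. For each $\sigma$, stitch the pair into a single vector $\hat\xi^\sigma\in \R^{V\cup E\cup L}$ taking the values of $\hat\xi^{1,\sigma}$ on the $V_1\setminus V_\cap$ coordinates, the values of $y^\sigma$ on the $V_\cap$ coordinates, and the values of $\hat\xi^{2,\sigma}$ on the $V_2\setminus V_\cap$ coordinates. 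Because $y^\sigma$ is binary on $V_\cap$, and because every edge of $G=G_1\cup G_2$ lies wholly inside $V_1$ or inside $V_2$, the defining relations of $\PP(G)$ decouple across $V_1\setminus V_\cap$ and $V_2\setminus V_\cap$ once $z_{V_\cap}=y^\sigma$ is fixed, giving $\hat\xi^\sigma\in \PP(G)$. The recession directions $r^1$ and $r^2$ agree on $L_\cap$ and together assemble into a valid recession direction $r$ of $\PP(G)$, yielding $z=\sum_\sigma \mu_\sigma\,\hat\xi^\sigma+r\in \PP(G)$. The step needing the most care is the bookkeeping for recession directions, which was absent from the original bounded $\MP$ proof; the no-plus-loop hypothesis is used exactly there, forcing the shared loops to be minus loops so that the associated recession components are pinned by $z^\cap$ itself and therefore automatically match between $r^1$ and $r^2$.
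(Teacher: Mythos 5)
Your proof is correct and follows essentially the same route as the paper, which simply asserts that the corollary is a direct consequence of \cref{extp} together with the simplex-based argument behind \cref{th decomposition} (the intersection has binary node values at extreme points, so $z^\cap$ decomposes uniquely). Your extra bookkeeping for recession directions --- pinning the shared minus-loop components at $z^\cap_{ii}-z^\cap_i$ so that $r^1$ and $r^2$ automatically agree --- and the product-of-faces gluing make explicit what the paper leaves implicit, and both steps are sound.
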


\begin{proof}
Let $L= L^- \cup L^+$, where as before $L^-$ and $L^+$ denote the set of minus loops and plus loops of $G$, respectively. Define the hypergraph $G':=(V,E,L^+)$.
    First, by~\cref{minusloops}, the description of $\PP(G)$ is obtained by putting together the description of $\PP(G')$ together with inequalities~\eqref{harmless}. Let $G_1=(V_1, E_1, L_1)$ with $L_1=L^-_1 \cup L^+_1$ and $G_2=(V_2, E_2, L_2)$ with
    $L_2=L^-_2 \cup L^+_2$. Define the hypergraphs $G'_1 := (V_1, E_1, L^+_1)$
    and $G'_2 := (V_2, E_2, L^+_2)$. Notice that $G'=G'_1 \cup G'_2$.
    Moreover $G'_1 \cap G'_2$ is a complete hypergraph without any loops; by~\cref{lemma: simplex}, $\PP(G'_1 \cap G'_2)$ is a simplex. Therefore, $\PP(G')$ is decomposable into $\PP(G'_1)$ and $\PP(G'_2)$. Together with~\cref{minusloops}, this in turn implies that $\PP(G)$ is decomposable into $\PP(G_1)$ and $\PP(G_2)$.
\end{proof}

\subsection{The RLT  and the multilinear polytope}

The RLT developed by Sherali and Adams~\cite{SheAda90} provides an automated mechanism to construct hierarchies of increasingly stronger linear programming relaxations for binary polynomial programs with $n$ variables,  such that the $n$th level relaxation of the hierarchy coincides with the convex hull of the feasible solutions. Our convexification technique in this paper can be considered as a generalization of RLT for box-constrained nonconvex quadratic programs. 

In the following, we provide a brief overview of the RLT terminology and results that we will use to develop our convexification technique. Let $G=(V,E)$ be a hypergraph and let $\S(G):=\{z \in \{0,1\}^{V\cup E}: z_e = \prod_{i \in e}{z_i}, \forall e \in E\}$. Define $n:=|V|$. In a similar vein to~\cite{SheAda90}, for some $d \in [n]$, we define a \emph{polynomial factor} as:
$$
f_d(J_1, J_2) := \prod_{i \in J_1}{z_i}\prod_{i \in J_2}{(1-z_i)}, \quad J_1, J_2 \subseteq [n], \; J_1 \cap J_2 = \emptyset, \;|J_1 \cup J_2| = d,
$$
where we define $z_{\emptyset}: = 1$.
Since by assumption $z$ is binary valued, it follows that $f_d(J_1, J_2) \geq 0$ for all $d \in [n]$ and all $J_1, J_2$ satisfying the above conditions. We then expand these polynomial factors and rewrite them as: 
$$
f_d(J_1, J_2) = \sum_{t: t\subseteq J_2}{(-1)^{|t|} \prod_{i\in J_1}{z_i}\prod_{i\in t}{z_i}}.
$$
Next, we linearize the polynomial factors 
by introducing new variables $z_{J_1\cup t} :=  \prod_{i\in J_1} {z_i}\prod_{i\in t}{z_i}$ for all $t \subseteq J_2$. Notice that 
$J_1 \cup t$ might be present in $E$ for some $t \subseteq J_2$, in which case no new variable is introduced.
We then obtain the following system of valid linear inequalities (possibly in an extended space) for the set $\S(G)$: 
\begin{equation}\label{rlteq}
\ell_d(J_1, J_2) := \sum_{t: t\subseteq J_2}{(-1)^{|t|} z_{J_1 \cup t}} \geq 0 , \quad J_1, J_2 \subseteq [n], \; J_1 \cap J_2 = \emptyset, \;|J_1 \cup J_2| = d.
\end{equation}

\begin{observation}\label{remark:dmnc}
     In~\cite{SheAda90}, the authors prove that for any $1\leq d < n$,  the system of inequalities $\ell_d(J_1, J_2) \geq 0$ for all $J_1, J_2 \subseteq [n]$ satisfying $J_1 \cap J_2 = \emptyset$ and $|J_1 \cup J_2| = d$ is implied by the system of inequalities $\ell_{d+1}(J_1, J_2) \geq 0$ for all $J_1, J_2 \subseteq [n]$ satisfying $J_1 \cap J_2 = \emptyset$ and $|J_1 \cup J_2| = d+1$. The proof uses the fact that for any $k \in [n] \setminus (J_1 \cup J_2)$ we have
     $$
     f_{d}(J_1, J_2) = f_{d+1}(J_1 \cup \{k\}, J_2)+ f_{d+1}(J_1 , J_2\cup \{k\}),
     $$ 
     implying that the nonnegativity of $\ell_{d}(J_1, J_2)$ follows from the nonnegativity of
     $\ell_{d+1}(J_1 \cup \{k\}, J_2)$ and $\ell_{d+1}(J_1 , J_2\cup \{k\})$
     (see Lemma~1~\cite{SheAda90}). 
\end{observation}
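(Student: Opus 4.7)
The plan is to establish the claim by constructing an explicit linear identity that expresses each level-$d$ RLT inequality as a sum of two level-$(d+1)$ RLT inequalities. First, I would begin from the trivial polynomial identity $z_k + (1-z_k) = 1$, valid for any index $k \in [n] \setminus (J_1 \cup J_2)$; such a $k$ exists precisely because $d < n$, and this is the only place where the hypothesis $d<n$ is used. Multiplying both sides by the polynomial factor $f_d(J_1, J_2)$ and using the definition of the factors yields
$$
f_d(J_1, J_2) \;=\; f_{d+1}(J_1 \cup \{k\}, J_2) \;+\; f_{d+1}(J_1, J_2 \cup \{k\}),
$$
which is the identity already highlighted in the statement.

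Second, I would lift this polynomial identity to the linearized setting. Since $J_1$, $\{k\}$, and $J_2$ are pairwise disjoint, no variable $z_i$ ever appears squared in either side's monomial expansion, so substituting each monomial $\prod_{i \in S} z_i$ by the extended variable $z_S$ is unambiguous and distributes over sums. The routine-but-careful step is to expand both right-hand quantities $\ell_{d+1}(J_1 \cup \{k\}, J_2)$ and $\ell_{d+1}(J_1, J_2 \cup \{k\})$ using~\eqref{rlteq}, and then partition the sum defining the latter according to whether the selected subset $t \subseteq J_2 \cup \{k\}$ contains $k$. The contributions from subsets containing $k$ cancel exactly with the expansion of $\ell_{d+1}(J_1 \cup \{k\}, J_2)$, the sign flip between $|t|$ and $|t|-1$ providing the cancellation, and what remains is $\sum_{t \subseteq J_2} (-1)^{|t|} z_{J_1 \cup t} = \ell_d(J_1, J_2)$. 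This gives the linear identity
$$
\ell_d(J_1, J_2) \;=\; \ell_{d+1}(J_1 \cup \{k\}, J_2) \;+\; \ell_{d+1}(J_1, J_2 \cup \{k\}).
$$

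Once this identity is in hand the desired implication is immediate: if every level-$(d+1)$ RLT inequality is satisfied at a point of the extended space, then each level-$d$ inequality is exhibited as a sum of two nonnegative quantities. Since $(J_1, J_2)$ was an arbitrary level-$d$ pair, the entire level-$d$ system follows.

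The main obstacle is really just the sign-bookkeeping in the cancellation step; the conceptual content, namely that the telescoping $z_k + (1-z_k) = 1$ refines a level-$d$ factor into two level-$(d+1)$ factors, is transparent once one has the right identity in mind. It is worth observing explicitly that the hypothesis $d<n$ is essential: without an index $k \notin J_1 \cup J_2$ the refinement step has no room to apply, which is consistent with the fact that at level $d=n$ the RLT hierarchy already exactly describes $\conv(\S)$ and no further refinement is possible.
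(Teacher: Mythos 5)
Your proposal is correct and follows essentially the same route as the paper: the paper's own justification (citing Lemma~1 of Sherali--Adams) rests on exactly the identity $f_{d}(J_1,J_2)=f_{d+1}(J_1\cup\{k\},J_2)+f_{d+1}(J_1,J_2\cup\{k\})$ for some $k\notin J_1\cup J_2$, whose linearization $\ell_{d}(J_1,J_2)=\ell_{d+1}(J_1\cup\{k\},J_2)+\ell_{d+1}(J_1,J_2\cup\{k\})$ immediately gives the implication. Your additional verification of the sign cancellation in the linearized identity is a correct filling-in of the detail the paper leaves to the reference.
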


An explicit description for the multilinear polytope of a complete hypergraph can then be obtained using the RLT.
We present this description next. 

\begin{proposition}[\cite{SheAda90}]\label{prop:RLT} 
Let $G=(V,E)$ be a complete hypergraph with $n:=|V|$. Then the multilinear polytope $\MP(G)$ is given by
\begin{equation}\label{eq:rlt}
\ell_n(J, V \setminus J) \geq 0, \quad \forall J \subseteq V,
\end{equation}
where $\ell_n(J, V \setminus J)$ is defined by~\eqref{rlteq}.
\end{proposition}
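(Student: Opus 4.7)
The plan is to prove the two inclusions $\MP(G) \subseteq P$ and $P \subseteq \MP(G)$, where $P$ denotes the polytope defined by the $2^n$ inequalities in~\eqref{eq:rlt}. For the first inclusion, I would invoke validity of the RLT inequalities: since $z \in \{0,1\}^n$ makes every factor $z_i$ and $1 - z_i$ nonnegative, the polynomial factor $f_n(J, V \setminus J) = \prod_{i \in J} z_i \prod_{i \in V \setminus J}(1 - z_i)$ is nonnegative on $\{0,1\}^n$. On the lifted characteristic vectors $(z, z_E)$ corresponding to points of $\{0,1\}^V$ (where $z_e = \prod_{i \in e} z_i$), each monomial in the expansion of $f_n(J, V\setminus J)$ coincides with the corresponding lifted variable, so $\ell_n(J, V\setminus J)$ equals $f_n(J, V\setminus J) \ge 0$ at every vertex of $\MP(G)$. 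By convexity this extends to all of $\MP(G)$.

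For the reverse inclusion, the key observation is that the polynomial identity
\[
\sum_{J \subseteq V} \prod_{i \in J} z_i \prod_{i \in V \setminus J}(1 - z_i) \; = \; \prod_{i \in V}\bigl(z_i + (1 - z_i)\bigr) \; = \; 1
\]
is purely algebraic and therefore survives linearization: treating $z_\emptyset = 1$, it yields the linear identity $\sum_{J \subseteq V} \ell_n(J, V\setminus J) = 1$ in the lifted variables $(z_e)_{e \subseteq V}$. Hence every point of $P$ additionally satisfies this equality, so the values $m_J := \ell_n(J, V\setminus J)$ form a probability distribution on $2^V$.

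Next I would invert the linearization by Möbius inversion on the Boolean lattice: a short computation shows that for every nonempty $e \subseteq V$,
\[
z_e \; = \; \sum_{J \supseteq e} \ell_n(J, V \setminus J),
\]
because when one expands the right-hand side and groups terms by $S = J \cup t$, the inner alternating sum $\sum_{J : e \subseteq J \subseteq S}(-1)^{|S \setminus J|}$ vanishes unless $S = e$. This formula exhibits an affine bijection between the lifted variables $(z_e)_{e \neq \emptyset}$ and the coefficients $(m_J)_{J \subseteq V}$ on the hyperplane $\sum_J m_J = 1$. Under this bijection, $P$ is mapped precisely to the standard simplex $\{m \ge 0 : \sum_J m_J = 1\}$, so $P$ is a simplex with exactly $2^n$ vertices obtained by setting $m_{J_0} = 1$ for a single $J_0$. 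Each such vertex satisfies $z_e = [e \subseteq J_0] = \prod_{i \in e}\mathbf{1}_{\{i \in J_0\}}$, which is the lifted characteristic vector of $J_0 \in \{0,1\}^V$, and therefore lies in $\MP(G)$. Since $P$ is the convex hull of these points, $P \subseteq \MP(G)$.

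The only step that requires a bit of bookkeeping rather than routine manipulation is the Möbius inversion establishing the affine bijection; once that is in place, everything else follows from nonnegativity of the polynomial factors and the partition-of-unity identity. I do not anticipate a genuine obstacle, but the exchange-of-summation argument in the inversion is where a careless index can go wrong, so I would verify it explicitly before concluding.
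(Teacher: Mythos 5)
Your proposal is correct. The paper does not prove this proposition itself (it is imported from Sherali--Adams), but the argument you give---validity of the linearized factors on lifted $0/1$ points for one inclusion, and the partition-of-unity identity plus M\"obius inversion exhibiting $P$ as a simplex whose $2^n$ vertices are exactly the lifted characteristic vectors for the other---is precisely the standard proof, and it is the same structural fact (that $\MP$ of a complete hypergraph is a simplex, so every point is a unique convex combination of its extreme points) that the paper invokes when justifying the decomposition result of \cref{th decomposition}.
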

We are interested in generalizing~\cref{prop:RLT} to the case in which $G$ has plus loops. 

\section{A new convexification scheme}
\label{sec: convexification}

In this section, we propose a new technique to build SOC-representable convex relaxations for $\QP(G)$. Our convexification technique serves as a generalization of RLT for box-constrained quadratic programs. In~\cite{SheTun95} the authors proposed a relatively direct extension of the original RLT to nonconvex quadratic programs.
In~\cite{Hertog21}, the authors present a generalization of RLT, which they refer to as the Reformulation Perspectification Technique, for nonconvex continuous optimization problems.
However, in these studies, it remains unclear whether and under what conditions the resulting relaxations constitute extended formulations for $\QP(G)$. In contrast, as we detail in the next section, our relaxation technique yields an exact characterization of $\QP(G)$ for a large class of graphs $G$.

To formalize our technique, we need to introduce some terminology first. Consider the function $f(u,v) := \frac{u^2}{v}$, $u \in \R$ and $v > 0$. Recall that $f(u,v)$ is a convex function because it is the perspective of the convex function $u^2$. We define the closure of $f(u,v)$, denoted by $\hat f(u,v)$ as follows:
\begin{align}\label{cpersp}
\hat f(u,v) =
\begin{cases}
& \frac{u^2}{v}, \qquad {\rm if}\; v > 0\\
& 0,           \; \;  \qquad     {\rm if} \; u=v=0\\
& +\infty        \qquad      {\rm if} \; u\neq 0, \; v=0.
\end{cases}
\end{align}
For notational simplicity, in the remainder of this paper, whenever we write a function of the form $\frac{u^2}{v}$, or its composition with an affine mapping, we imply its closure as defined by~\eqref{cpersp}.

Consider a graph $G=(V,E, L)$, with $L= L^- \cup L^+$ and suppose that $L^+ \neq \emptyset$. For each plus loop $\{i,i\} \in L^+$, define 
\begin{equation}\label{neigbours}
N(i) := \{j \in V: \{i, j\} \in E \cup L\}.
\end{equation}
Notice that $N(i) \ni i$.  We are now ready to present our new convexification technique. 

\begin{proposition}\label{convRelax}
Let $G=(V, E, L)$ be a graph with $L = L^- \cup L^+$.
Let $\{i,i\} \in L^+$ and let $M \subseteq N(i)$ such that $M \ni i$. Define $d:= |M|$. Then the following inequalities form a closed convex set:
\begin{align}
& z_{ii} \geq\sum_{J \subseteq M: J \ni i} \frac{(\ell_{d}(J,M \setminus J))^2}{\ell_{d-1}(J\setminus \{i\}, M \setminus J)}\label{newineq}\\
& \ell_{d}(J,M \setminus J) \geq 0, \; \forall J \subseteq M,\label{support}
\end{align}
where $\ell_d(\cdot, \cdot)$ is defined by~\eqref{rlteq}.
Moreover, the projection of~\eqref{newineq}-\eqref{support}  onto the space $z_p$, $p \in V \cup E \cup L$ is a convex relaxation of $\QP(G)$.
\end{proposition}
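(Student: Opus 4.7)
The proposition has two parts: (i) the system \eqref{newineq}-\eqref{support} defines a closed convex set, and (ii) its projection onto the coordinates $z_p$, $p \in V \cup E \cup L$, contains $\QP(G)$. My plan is to handle (i) by recognizing each summand in \eqref{newineq} as a closed perspective function and (ii) by lifting the extreme points of $\QP(G)$ characterized in \cref{extp} and exploiting the partition-of-unity structure of the RLT polynomial factors.

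For (i), I would observe that $(u,v) \mapsto u^2/v$ is the perspective of the closed convex function $u \mapsto u^2$, hence closed and jointly convex on $\{v \geq 0\}$ under the extension \eqref{cpersp}. Composed with the affine maps $\ell_d(J,M\setminus J)$ and $\ell_{d-1}(J\setminus\{i\}, M\setminus J)$, each summand on the right-hand side of \eqref{newineq} is therefore a closed convex function of $z$, provided the denominator is nonnegative. This nonnegativity follows from \eqref{support} via \cref{remark:dmnc}: taking $k=i \notin (J\setminus\{i\}) \cup (M\setminus J) = M\setminus\{i\}$, one has
\[
\ell_{d-1}(J\setminus\{i\}, M\setminus J) = \ell_d(J, M\setminus J) + \ell_d(J\setminus\{i\}, (M\setminus J)\cup\{i\}),
\]
and both summands on the right are instances of \eqref{support} indexed by subsets of $M$. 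A finite sum of closed convex functions is closed convex, so \eqref{newineq}-\eqref{support} defines a closed convex set; equivalently, each summand can be expressed via a rotated second-order cone constraint $w_J \cdot \ell_{d-1}(J\setminus\{i\}, M\setminus J) \geq (\ell_d(J, M\setminus J))^2$ with $w_J \geq 0$, which makes the SOC-representability manifest and closedness immediate.

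For (ii), I would show that every extreme point $\bar z$ of $\QP(G)$ admits a lift satisfying \eqref{newineq}-\eqref{support}; the general case then follows since convex combinations of lifts remain feasible in the convex system. By \cref{extp}, $\bar z_k \in [0,1]$ for every $k \in V$ and $\bar z_p = \prod_{k \in p}\bar z_k$ for $p \in E \cup L$. Extend $\bar z$ by setting $\bar z_S := \prod_{k \in S}\bar z_k$ for each $S \subseteq M$ with $|S| \geq 2$ that is not already among the problem variables. A direct expansion gives
\[
\ell_d(J, M\setminus J) = \prod_{k \in J}\bar z_k \prod_{k \in M\setminus J}(1-\bar z_k) \geq 0,
\]
verifying \eqref{support}, and similarly $\ell_{d-1}(J\setminus\{i\}, M\setminus J) = \prod_{k \in J\setminus\{i\}}\bar z_k \prod_{k \in M\setminus J}(1-\bar z_k)$. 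Cancelling common factors (with the closure convention taking care of any simultaneous vanishing of numerator and denominator), the $J$-th summand of the right-hand side of \eqref{newineq} collapses to $\bar z_i^2 \prod_{k \in J\setminus\{i\}}\bar z_k \prod_{k \in M\setminus J}(1-\bar z_k)$. Summing over all $J \subseteq M$ with $J \ni i$ and reindexing by $J' = J\setminus\{i\} \subseteq M \setminus \{i\}$ yields the partition-of-unity identity
\[
\bar z_i^2 \sum_{J' \subseteq M\setminus\{i\}} \prod_{k \in J'}\bar z_k \prod_{k \in (M\setminus\{i\})\setminus J'}(1-\bar z_k) = \bar z_i^2 = \bar z_{ii},
\]
so \eqref{newineq} holds with equality at the lifted point.

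The main obstacle, in my view, is less conceptual than bookkeeping: one must be attentive to the degenerate cases in which $\ell_{d-1}(J\setminus\{i\}, M\setminus J) = 0$, to confirm that the numerator $(\ell_d(J, M\setminus J))^2$ vanishes in tandem and that the closure convention \eqref{cpersp} assigns the correct value $0$, which is exactly what is needed to reconcile the perspective-function viewpoint in step (i) with the explicit product-expansion computation in step (ii).
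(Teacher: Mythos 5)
Your proposal follows essentially the same route as the paper: closedness via the perspective function $\hat f$ composed with the affine maps (with nonnegativity of the denominator from \cref{remark:dmnc} and the identity $\ell_{d-1}(J\setminus\{i\}, M\setminus J) = \ell_d(J, M\setminus J) + \ell_d(J\setminus\{i\}, (M\setminus J)\cup\{i\})$ handling the degenerate $0/0$ case), and validity via the partition-of-unity collapse of the right-hand side of \eqref{newineq} to $z_i^2$. The one imprecision is in how you deduce validity on all of $\QP(G)$: you verify the inequality only at the extreme points characterized in \cref{extp} and then invoke convex combinations, but $\QP(G)$ is unbounded (its recession cone contains directions increasing $z_{ii}$ for plus loops), so it is not the convex hull of its extreme points. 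The paper instead runs the identical computation at an arbitrary point of the nonconvex generating set $\S$, obtaining $\sum_{J} (\ell_d)^2/\ell_{d-1} = z_i^2 \leq z_{ii}$ directly from the constraint $z_{ii}\geq z_i^2$; validity on $\conv(\S)$ then follows because the lifted system is convex and closed. Your computation works verbatim in that generality (your final step already uses $z_i^2 = \bar z_{ii}$ only because extreme points satisfy it with equality), so the fix is to drop the restriction to extreme points, or alternatively to note that the recession directions of $\QP(G)$ only increase the left-hand side of \eqref{newineq} and do not appear in \eqref{support}.
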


\begin{proof}
For any $J \subseteq M$ such that $J \ni i$, define the function
$$g(M,J):= \frac{(\ell_{d}(J,M \setminus J))^2}{\ell_{d-1}(J\setminus \{i\}, M \setminus J)}.$$ 
First, note that by~\cref{remark:dmnc}, inequalities~\eqref{support} imply that 
$\ell_{d-1}(J\setminus \{i\}, M \setminus J) \geq 0$. It then follows that $g(M,J)$ is a closed convex function because it is obtained by composing the closed convex function $\hat f(u,v)$ defined by~\eqref{cpersp}, with the affine mapping $(u,v) \rightarrow (\ell_{d}(J,M \setminus J), \ell_{d-1}(J\setminus \{i\}, M \setminus J))$. Notice that $g(M,J) \neq +\infty$, because
$\ell_{d-1}(J\setminus \{i\}, M \setminus J) = \ell_{d}(J, M \setminus J) + \ell_{d}(J \setminus \{i\}, (M \setminus J) \cup\{i\})$, which together with inequalities~\eqref{support} implies that whenever $\ell_{d-1}(J\setminus \{i\}, M \setminus J) = 0$, we also have $\ell_{d}(J, M \setminus J) = 0$ and as a result $g(M, J) = 0$. We then deduce that the set defined by inequalities~\eqref{newineq}-\eqref{support} is a closed convex set.

Next, we show the validity of inequalities~\eqref{newineq}-\eqref{support} for $\QP(G)$. 
Inequalities~\eqref{support} are RLT 
inequalities and hence their validity follows. Now consider inequality~\eqref{newineq}. 
Define $M' = M \setminus \{i\}$. We have
\begin{align*}
z^2_i &= \sum_{K \subseteq M'}{z^2_i\prod_{j\in K}{z_j}\prod_{j \in M' \setminus K}(1-z_j)}\\
&= \sum_{K \subseteq M'}{\frac{\Big(\prod_{j\in K \cup\{i\}}{z_j}\prod_{j \in M' \setminus K}(1-z_j)\Big)^2}{\prod_{j\in K}{z_j}\prod_{j \in M' \setminus K}(1-z_j)}}\\
& =\sum_{K \subseteq M'}{\frac{\Big(\ell_{d}(K\cup\{i\},M' \setminus K)\Big)^2}{\ell_{d-1}(K, M'\setminus K)}}\\
& = \sum_{J \subseteq M: J \ni i} \frac{(\ell_{d}(J,M \setminus J))^2}{\ell_{d-1}(J\setminus \{i\}, M \setminus J)},
\end{align*}
where the first equality follows thanks to the following identity: 
$$\sum_{K \subseteq M'}{\prod_{j\in K}{z_j}\prod_{j \in M' \setminus K}(1-z_j)} = 1,$$
and where in the second equality we define $\frac{(0)^2}{0} := 0$.
Finally, using $z_{ii} \geq z_i^2$, inequality~\eqref{newineq} follows.
\end{proof}

The following example illustrates the construction of the proposed relaxations.

\begin{example}
    Let $G=(V,E,L)$ be a graph with $V=\{1,2,3\}$, $E=\{\{1,2\},\{1,3\}, \{2,3\}\}$, and $L=L^+=\{\{1,1\}\}$. In this case, we have $N(1)=V$. Letting $M \subseteq N(1)$ such that $M \ni 1$ and $|M|=2$, we obtain the following valid inequalities for $\QP(G)$:
    \begin{align*}
       & z_{11} \geq \frac{z^2_{12}}{z_2}+\frac{(z_1-z_{12})^2}{1-z_2}, \;z_{12} \geq 0, \; z_1 -z_{12} \geq 0, \; z_2 -z_{12} \geq 0, \;1-z_1-z_2 + z_{12} \geq 0 \\
       & z_{11} \geq \frac{z^2_{13}}{z_3}+\frac{(z_1-z_{13})^2}{1-z_3}, \; z_{13} \geq 0, \; z_1 -z_{13} \geq 0, \; z_3-z_{13} \geq 0, \; 1-z_1-z_3 +z_{13} \geq 0.
    \end{align*}
    Alternatively, letting $M = N(1)$, we obtain the following valid inequalities for $\QP(G)$ in an extended space:
    \begin{align*}
    & z_{11} \geq \frac{(z_{123})^2}{z_{23}}+\frac{(z_{12}-z_{123})^2}{z_2-z_{23}}+\frac{(z_{13}-z_{123})^2}{z_3-z_{23}}+\frac{(z_1-z_{12}-z_{13}+z_{123})^2}{1-z_2-z_3+z_{23}} \\
    & z_{123} \geq 0, \;z_{12}-z_{123} \geq 0, \; z_{13}-z_{123}, \; z_{23}-z_{123} \geq 0, \;  z_1-z_{12}-z_{13}+z_{123} \geq 0\\
    & z_2-z_{12}-z_{23}+z_{123} \geq 0, \; z_3-z_{13}-z_{23}+z_{123} \geq 0, \;  1-z_1-z_2-z_3+z_{12}+z_{13}+z_{23}-z_{123}\geq 0.
    \end{align*}
$\diamond$    
\end{example}

Given a plus loop $\{i,i\} \in L^+$ and the set $N(i)$ defined by~\eqref{neigbours}, there are $2^{|N(i)|-1}$ choices for the set $M$ and hence as many choices for constructing inequalities~\eqref{newineq}-~\eqref{support}. The following proposition provides a dominance relationship among the corresponding set of inequalities.

\begin{proposition}\label{dominance}
Let $G=(V, E, L)$ be a graph with $L = L^- \cup L^+$.
Let $\{i, i\} \in L^+$ and let $M_1, M_2 \subseteq N(i)$ such that $M_1, M_2 \ni i$. If $M_1 \subset M_2$, then the system of inequalities~\eqref{newineq}-~\eqref{support} with $M=M_1$ is implied by the system of inequalities~\eqref{newineq}-~\eqref{support} with $M=M_2$.   
\end{proposition}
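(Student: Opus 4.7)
The plan is to reduce to the case $|M_2|=|M_1|+1$ and then show that the inequality for $M_1$ is obtained by summing, term by term, instances of the elementary inequality $\tfrac{(a_1+a_2)^2}{b_1+b_2}\leq \tfrac{a_1^2}{b_1}+\tfrac{a_2^2}{b_2}$ (equivalently, Cauchy--Schwarz, or convexity of the perspective $\hat f$). Since we can go from $M_1$ to $M_2$ by adding one element at a time, iterating the one-step result yields the general claim.

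Fix $k\in M_2\setminus M_1$ and let $M:=M_1\cup\{k\}$ with $d:=|M_1|$, so $|M|=d+1$. First, the RLT support inequalities~\eqref{support} with $M$ imply those with $M_1$: this is a direct application of~\cref{remark:dmnc}, using the identity
\[
\ell_{d}(J,M_1\setminus J)=\ell_{d+1}(J\cup\{k\},M_1\setminus J)+\ell_{d+1}(J,(M_1\setminus J)\cup\{k\}),\qquad J\subseteq M_1.
\]
The same identity (applied with $d$ replaced by $d-1$ and $J$ replaced by $J\setminus\{i\}$) gives
\[
\ell_{d-1}(J\setminus\{i\},M_1\setminus J)=\ell_{d}(J\cup\{k\}\setminus\{i\},M_1\setminus J)+\ell_{d}(J\setminus\{i\},(M_1\setminus J)\cup\{k\}).
\]

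Now fix $J\subseteq M_1$ with $J\ni i$, and set
\begin{align*}
a_1&:=\ell_{d+1}(J\cup\{k\},M_1\setminus J), & a_2&:=\ell_{d+1}(J,(M_1\setminus J)\cup\{k\}),\\
b_1&:=\ell_{d}(J\cup\{k\}\setminus\{i\},M_1\setminus J), & b_2&:=\ell_{d}(J\setminus\{i\},(M_1\setminus J)\cup\{k\}).
\end{align*}
The two identities above read $\ell_{d}(J,M_1\setminus J)=a_1+a_2$ and $\ell_{d-1}(J\setminus\{i\},M_1\setminus J)=b_1+b_2$. By~\cref{remark:dmnc}, $a_1,a_2,b_1,b_2\geq 0$ under~\eqref{support} for $M$. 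Using the convexity of $\hat f(u,v)=u^2/v$ (the perspective of $u^2$), equivalently the Cauchy--Schwarz inequality, we obtain
\[
\frac{(\ell_{d}(J,M_1\setminus J))^2}{\ell_{d-1}(J\setminus\{i\},M_1\setminus J)}=\frac{(a_1+a_2)^2}{b_1+b_2}\;\leq\;\frac{a_1^2}{b_1}+\frac{a_2^2}{b_2}.
\]
The two summands on the right are precisely the terms in the sum~\eqref{newineq} with $M$ corresponding to $J':=J\cup\{k\}$ and $J':=J$ respectively; note that as $J$ ranges over subsets of $M_1$ containing $i$, the pairs $\{J,J\cup\{k\}\}$ partition the subsets of $M$ containing $i$. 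Summing the above inequality over all such $J$ therefore yields
\[
\sum_{\substack{J\subseteq M_1\\ J\ni i}}\frac{(\ell_{d}(J,M_1\setminus J))^2}{\ell_{d-1}(J\setminus\{i\},M_1\setminus J)}\;\leq\; \sum_{\substack{J'\subseteq M\\ J'\ni i}}\frac{(\ell_{d+1}(J',M\setminus J'))^2}{\ell_{d}(J'\setminus\{i\},M\setminus J')},
\]
so~\eqref{newineq} with $M$ implies~\eqref{newineq} with $M_1$.

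The main obstacle is handling the degenerate cases where some denominator vanishes. These are handled by the closure convention~\eqref{cpersp}: whenever $b_1+b_2=0$ then both $b_1=b_2=0$ and, as argued in the proof of~\cref{convRelax}, also $a_1=a_2=0$, so both sides of the Cauchy--Schwarz step read as $0$. Finally, to go from $M_1\subset M_2$ with arbitrary gap in size, we apply the one-step dominance along any chain $M_1=N_0\subset N_1\subset\dots\subset N_r=M_2$ with $|N_{j+1}|=|N_j|+1$, which completes the argument.
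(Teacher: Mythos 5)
Your proof is correct and follows essentially the same route as the paper's: reduce to $M_2=M_1\cup\{k\}$, invoke \cref{remark:dmnc} for the support inequalities, split each term of the $M_1$ sum via the identities $\ell_{d}(J,M_1\setminus J)=a_1+a_2$ and $\ell_{d-1}(J\setminus\{i\},M_1\setminus J)=b_1+b_2$, and conclude by the subadditivity of $\hat f(u,v)=u^2/v$ (which the paper derives from convexity plus positive homogeneity, equivalent to your Cauchy--Schwarz step). Your explicit treatment of the vanishing-denominator case is a small addition the paper leaves implicit, but the argument is the same.
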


\begin{proof}
Without loss of generality, let $M_2 = M_1 \cup \{k\}$ for some $k \in N(i) \setminus M_1$. Define $d:=|M_1|$. By~\cref{remark:dmnc}, inequalities~\eqref{support} with $M=M_1$ are implied by inequalities~\eqref{support} with $M=M_2$. Therefore, to prove the statement, it suffices to show that  
$$
\sum_{J \subseteq M_2: J \ni i} \frac{(\ell_{d+1}(J,M_2 \setminus J))^2}{\ell_{d}(J\setminus \{i\}, M_2 \setminus J)} \geq \sum_{J \subseteq M_1: J \ni i} \frac{(\ell_{d}(J,M_1 \setminus J))^2}{\ell_{d-1}(J\setminus \{i\}, M_1 \setminus J)}.
$$
Substituting $M_2 = M_1 \cup \{k\}$, the above inequality can be equivalently written as:
$$
\sum_{\substack{J \subseteq M_1:\\ J \ni i}} \Big(\frac{(\ell_{d+1}(J,M_1 \cup \{k\} \setminus J))^2}{\ell_{d}(J\setminus \{i\}, M_1 \cup \{k\} \setminus J)}+
 \frac{(\ell_{d+1}(J \cup \{k\},M_1 \setminus J))^2}{\ell_{d}(J \cup \{k\}\setminus \{i\}, M_1 \setminus J)}\Big)\geq
 \sum_{\substack{J \subseteq M_1: \\J \ni i}} \frac{(\ell_{d}(J,M_1 \setminus J))^2}{\ell_{d-1}(J\setminus \{i\}, M_1 \setminus J)}.
$$
To show the validity of the above inequality, it suffices to show that for each $J \subseteq M_1$ satisfying $J \ni i$, we have:
\begin{equation}\label{subadd}
\frac{(\ell_{d+1}(J,M_1 \cup \{k\} \setminus J))^2}{\ell_{d}(J\setminus \{i\}, M_1 \cup \{k\} \setminus J)}+
 \frac{(\ell_{d+1}(J \cup \{k\},M_1 \setminus J))^2}{\ell_{d}(J \cup \{k\}\setminus \{i\}, M_1 \setminus J)}\geq  \frac{(\ell_{d}(J,M_1 \setminus J))^2}{\ell_{d-1}(J\setminus \{i\}, M_1 \setminus J)}.
\end{equation}
Recall that by definition, we have 
\begin{align*}
& \ell_{d}(J,M_1 \setminus J) = \ell_{d+1}(J,M_1 \cup \{k\} \setminus J)+ \ell_{d+1}(J \cup \{k\},M_1 \setminus J)\\
& \ell_{d-1}(J\setminus \{i\}, M_1 \setminus J) = \ell_{d}(J\setminus \{i\}, M_1 \cup \{k\} \setminus J)+\ell_{d}(J \cup \{k\}\setminus \{i\}, M_1 \setminus J).
\end{align*}
That is, to show the validity of inequality~\eqref{subadd}, it suffices to show that the function $\hat f(u,v)$ defined by~\eqref{cpersp} is subadditive. This is indeed true, since $\hat f$ is both convex and positively homogeneous, and this completes the proof.
\end{proof}

By~\cref{dominance}, the strongest relaxation is obtained by letting $M=N(i)$ for all $\{i,i\} \in L^+$. However, it is important to note that the system~\eqref{newineq}-~\eqref{support} consists of $\Theta(2^{|M|})$ variables and inequalities. Hence, to obtain a polynomial-size convex relaxation of $\QP(G)$, we should set $|M| \in O(\log|V|)$. 
In the same spirit as in RLT, we can then define a hierarchy of relaxations, where in the $r$th level relaxation, for each $\{i,i\} \in L^+$ we consider all $M \subseteq N(i)$ with $M \ni i$ such that $|M| = \min\{|N(i)|, \; r\}$. Denoting by $d_{\max}$ the maximum degree of a node with a plus loop in $G$, we have $1 \leq r \leq d_{\max}+1$.

\begin{observation}
By~\cref{dominance}, the weakest type of inequalities~\eqref{newineq}-\eqref{support} is obtained by letting $M=\{i\}$, in which case inequality~\eqref{newineq} simplifies to $z_{ii} \geq z^2_i$ and inequalities~\eqref{support} simplify to $z_i \geq 0$ and $1-z_i \geq 0$. These inequalities are indeed redundant because they are already present in the description of $\QP(G)$. The second weakest type of inequalities~\eqref{newineq}-~\eqref{support} is obtained by letting $M=\{i,j\}$ for some $j \in N(i)$. In this case, inequality~\eqref{newineq} simplifies to:
\begin{align*}
z_{ii} \geq & \frac{z^2_{ij}}{z_j} + \frac{(z_i-z_{ij})^2}{1-z_j}= \frac{z^2_{ij}-2z_iz_jz_{ij}+z^2_i z_j}{z_j(1-z_j)}\\
=& \frac{(z_{ij}-z_i z_j)^2+z^2_i z_j (1-z_j)}{z_j (1-z_j)}
=  z^2_i+\frac{(z_{ij}-z_iz_j)^2}{z_j(1-z_j)},
\end{align*}
and inequalities~\eqref{support} simplify to $z_{ij} \geq 0$, $z_i-z_{ij} \geq 0$, $z_j - z_{ij} \geq 0$, and $1-z_i-z_j+z_{ij} \geq 0$. Therefore, in this case, inequality~\eqref{newineq} dominates inequality $z_{ii} \geq z^2_i$ at any point satisfying $z_{ij} \neq z_i z_j$.
\end{observation}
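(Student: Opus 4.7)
The plan is to verify the observation by direct substitution into the definition~\eqref{rlteq} of $\ell_d(J_1,J_2)$ for the two cases $M=\{i\}$ and $M=\{i,j\}$, and then to establish the claimed closed-form identity by a short algebraic manipulation. Since $N(i)\ni i$ by definition, the requirement $M\ni i$ with $|M|=d$ determines $M$ up to the choice of $d-1$ additional neighbours, so enumerating the subsets $J\subseteq M$ with $J\ni i$ (of which there are $2^{d-1}$) is straightforward for small $d$.

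For the first case, $d=1$ forces $J=\{i\}$ as the only summand in~\eqref{newineq}. Using $\ell_1(\{i\},\emptyset)=z_i$ and $\ell_0(\emptyset,\emptyset)=1$ (with the empty product convention $z_\emptyset=1$), inequality~\eqref{newineq} reduces to $z_{ii}\geq z_i^2$, and the two instances of~\eqref{support} give $\ell_1(\{i\},\emptyset)=z_i\geq 0$ and $\ell_1(\emptyset,\{i\})=1-z_i\geq 0$; all three are already imposed in the definition of $\QP(G)$. For the second case, $d=2$ yields $J\in\{\{i\},\{i,j\}\}$, and direct evaluation gives $\ell_2(\{i,j\},\emptyset)=z_{ij}$, $\ell_1(\{j\},\emptyset)=z_j$, $\ell_2(\{i\},\{j\})=z_i-z_{ij}$, and $\ell_1(\emptyset,\{j\})=1-z_j$, so~\eqref{newineq} becomes
\[
z_{ii}\;\geq\;\frac{z_{ij}^2}{z_j}+\frac{(z_i-z_{ij})^2}{1-z_j}.
\]
Putting this over the common denominator $z_j(1-z_j)$, the numerator expands to $z_{ij}^2-2z_iz_jz_{ij}+z_i^2z_j$; rewriting this as $(z_{ij}-z_iz_j)^2+z_i^2z_j(1-z_j)$ and dividing yields the claimed form $z_i^2+(z_{ij}-z_iz_j)^2/(z_j(1-z_j))$. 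Similarly, the four inequalities~\eqref{support} unfold to $1-z_i-z_j+z_{ij}\geq 0$, $z_i-z_{ij}\geq 0$, $z_j-z_{ij}\geq 0$, $z_{ij}\geq 0$, which are the McCormick inequalities.

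The dominance assertion is then immediate: the extra summand $(z_{ij}-z_iz_j)^2/(z_j(1-z_j))$ is strictly positive whenever $z_{ij}\neq z_iz_j$ and $z_j\in(0,1)$. There is no genuine obstacle in this observation; the only delicate point is the status of the fraction at $z_j\in\{0,1\}$, which I would handle via the closure convention~\eqref{cpersp}, noting that the McCormick inequalities~\eqref{support} already force $z_{ij}=0$ when $z_j=0$ and $z_{ij}=z_i$ when $z_j=1$, so the numerator vanishes at those boundary points and the standing convention $0^2/0:=0$ applies.
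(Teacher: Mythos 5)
Your proposal is correct and follows essentially the same route as the paper: direct evaluation of the polynomial factors $\ell_d(\cdot,\cdot)$ for $M=\{i\}$ and $M=\{i,j\}$, the same common-denominator algebra yielding $z_i^2+(z_{ij}-z_iz_j)^2/(z_j(1-z_j))$, and the same identification of~\eqref{support} with the McCormick inequalities. Your added remark on the boundary cases $z_j\in\{0,1\}$ via the closure convention~\eqref{cpersp} is a sound (if implicit in the paper) clarification.
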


\begin{observation}\label{SOCrep}
    Inequalities~\eqref{newineq}-~\eqref{support} are SOC-representable. To see this, for each $J \subseteq M$ satisfying $J \ni i$ define a new variable $t(J)$. It then follows that an equivalent reformulation of inequalities~\eqref{newineq}-~\eqref{support} in an extended space of variables is given by:
    \begin{align*}
& z_{ii} \geq\sum_{J \subseteq M: J \ni i} {t(J)}\\
& t(J) \ell_{d-1}(J\setminus \{i\}, M \setminus J) \geq (\ell_{d}(J,M \setminus J))^2, \; \forall J \subseteq M: J \ni i\\
& \ell_{d}(J,M \setminus J) \geq 0, \; \forall J \subseteq M.
\end{align*}
The first and third sets of the above inequalities are linear, while the second set consists of rotated second-order cone inequalities composed with an affine mapping. Again, notice that if we let $|M| \in O(\log|V|)$, then the above system is of polynomial size.
\end{observation}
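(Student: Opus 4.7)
The plan is to verify two things: that the displayed lifted system is equivalent to \eqref{newineq}-\eqref{support} after projecting out the new variables $t(J)$, and that each constraint of the lifted system is either linear or a rotated second-order cone constraint composed with an affine map.

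For the equivalence, I would argue both projection directions. In the forward direction, given $z$ satisfying \eqref{newineq}-\eqref{support}, I set
\[
t(J) := \frac{\ell_d(J, M \setminus J)^2}{\ell_{d-1}(J \setminus \{i\}, M \setminus J)}
\]
under the closure convention \eqref{cpersp}; then each rotated-cone inequality holds with equality, the linear inequalities are precisely \eqref{support}, and $z_{ii} \geq \sum_{J \ni i} t(J)$ is the content of \eqref{newineq}. In the reverse direction, suppose $(z, t)$ is feasible in the lifted system. Applying the identity
\[
\ell_{d-1}(J \setminus \{i\}, M \setminus J) = \ell_d(J, M \setminus J) + \ell_d(J \setminus \{i\}, (M \setminus J) \cup \{i\}),
\]
which was already used in the proof of \cref{convRelax}, together with \eqref{support}, I get $\ell_{d-1}(J \setminus \{i\}, M \setminus J) \geq 0$. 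Whenever this quantity is strictly positive, the rotated-cone inequality forces $t(J) \geq \ell_d(J, M \setminus J)^2/\ell_{d-1}(J \setminus \{i\}, M \setminus J)$. Summing over $J \ni i$ and combining with $z_{ii} \geq \sum_{J \ni i} t(J)$ recovers \eqref{newineq}.

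For the SOC representability, the first and third families are manifestly affine in the lifted variables $(z, t)$ since each $\ell_d(\cdot,\cdot)$ is an affine combination of the original $z_p$'s. Each constraint of the second family has the form $\alpha \beta \geq \gamma^2$ where $\alpha = t(J)$, $\beta = \ell_{d-1}(J \setminus \{i\}, M \setminus J)$, and $\gamma = \ell_d(J, M \setminus J)$; together with the nonnegativity of $\alpha$ and $\beta$ that is implicit in a rotated Lorentz cone, this is the defining inequality of a rotated second-order cone composed with an affine map, as required. The polynomial-size claim is then immediate, since there are $2^{|M|-1}$ subsets $J \subseteq M$ containing $i$ and $2^{|M|}$ subsets of $M$ in total, which is polynomial in $|V|$ whenever $|M| \in O(\log |V|)$.

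The main obstacle is handling the boundary case $\ell_{d-1}(J \setminus \{i\}, M \setminus J) = 0$, where the closure \eqref{cpersp} demands care. The identity above, combined with \eqref{support}, guarantees that a zero denominator forces $\ell_d(J, M \setminus J) = 0$; thus the ratio evaluates to $0$ under the closure convention, and the rotated-cone inequality $t(J) \cdot 0 \geq 0$ is trivially consistent with setting $t(J) = 0$ in the forward direction. This reconciliation between the closure convention and the rotated-cone slackness at the boundary is the only nontrivial subtlety, and it mirrors the argument already carried out in the proof of \cref{convRelax}.
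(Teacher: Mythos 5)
Your proposal is correct and follows essentially the same route as the paper, which simply introduces the variables $t(J)$, observes that the first and third families are affine and the second is a rotated second-order cone composed with an affine map, and counts the constraints. The extra care you take with the projection equivalence---in particular noting that membership in the rotated Lorentz cone implicitly enforces $t(J)\geq 0$, which is what makes the reverse direction go through when $\ell_{d-1}(J\setminus\{i\}, M\setminus J)=0$---is a detail the paper leaves implicit, and your treatment of it is consistent with the closure convention~\eqref{cpersp} and the identity already used in the proof of \cref{convRelax}.
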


Let $G=(V, E, L)$ be a graph. We next show that for any plus loop $\{i,i\} \in L^+$, if the degree of node $i$ is larger than one, then the proposed convexification scheme enables us to obtain stronger convex relaxations for $\QP(G)$ than the existing ones.

\begin{proposition}\label{compareSDP}
    Consider a graph $G=(V, E, L)$ with $n:=|V|$, $L = L^-\cup L^+$
    and $L^+ \neq \emptyset$.
    Consider inequalities~\eqref{newineq}-~\eqref{support} for some $\{i,i\} \in L^+$ and let $M \subseteq N(i)$ such that $M \ni i$, where $N(i)$ is defined by~\eqref{neigbours}.
    Denote by $\S^i_M$ the projection of these inequalities onto the space $z_p$, $p \in V \cup E \cup L$. Then we have the following:
    \medskip
    
    \begin{itemize}
        \item [(i)] if $|M|  = 2$, then $\S^i_M$ is implied by the relaxation $\C^{\rm SDP+MC}_n$ defined by~\eqref{SDPMC}.
        
        \item [(ii)] if $|M| > 2$, then $\S^i_M$ is not implied by the relaxation $\C^{\rm SDP+MC+Tri}_n$ defined by~\eqref{SDPMCTri}.
    \end{itemize}
\end{proposition}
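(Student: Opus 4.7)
The plan is to handle the two items by distinct techniques: a Schur-complement reduction for (i), and a concrete separating point for (ii).

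For (i), the observation following~\cref{dominance} already simplifies the system~\eqref{newineq}--\eqref{support} with $M=\{i,j\}$ to the single convex inequality $z_{ii}\geq z_i^{2}+(z_{ij}-z_iz_j)^{2}/(z_j(1-z_j))$ together with the four McCormick inequalities for the pair $\{i,j\}$; the latter are contained in $\C_n^{\rm SDP+MC}$ by definition. For the main inequality, I would restrict the $(n+1)\times(n+1)$ PSD constraint of $\C_n^{\rm SDP}$ to the $3\times 3$ principal submatrix indexed by rows $0,i,j$ (with $0$ the row of the constant~$1$) and apply the Schur complement on the $(0,0)$-entry. This yields $(Y_{ii}-x_i^{2})(Y_{jj}-x_j^{2})\geq (Y_{ij}-x_ix_j)^{2}$, after which the McCormick diagonal bound $Y_{jj}\leq x_j$, which is equivalent to $Y_{jj}-x_j^{2}\leq x_j(1-x_j)$, gives exactly the inequality defining $\S^i_{\{i,j\}}$ in the original space.

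For (ii), by~\cref{dominance} it suffices to exhibit a single point of $\C_n^{\rm SDP+MC+Tri}$ that fails the projected inequalities $\S^i_M$ for some $M$ with $|M|=3$, since enlarging $M$ only tightens the relaxation. I would take $n=3$, a plus loop at $i$, $M=\{i,j,k\}$, and the point
\[
(x_i,x_j,x_k)=(1/2,1/3,1/3),\quad (Y_{ij},Y_{ik},Y_{jk})=(0,0,1/6),\quad (Y_{ii},Y_{jj},Y_{kk})=(9/20,1/3,1/3).
\]
Verifying this point lies in $\C_n^{\rm SDP+MC+Tri}$ is routine: all McCormick bounds are immediate; three of the four triangle inequalities on $\{i,j,k\}$ are strict while $x_i+x_j+x_k-Y_{ij}-Y_{ik}-Y_{jk}\leq 1$ is tight; and a direct eigenvalue computation shows that $Y-xx^\top$ is PSD with $Y_{ii}=9/20$ lying exactly on the PSD boundary.

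To see the point is outside $\S^i_{\{i,j,k\}}$, note that at our values the RLT support constraints $z_{ij}-z_{ijk}\geq 0$, $z_{ik}-z_{ijk}\geq 0$, and $1-z_i-z_j-z_k+z_{ij}+z_{ik}+z_{jk}-z_{ijk}\geq 0$ all collapse to $-z_{ijk}\geq 0$, which together with $z_{ijk}\geq 0$ forces $z_{ijk}=0$. Substituting into~\eqref{newineq}, three of the four ratios vanish and the last term equals $x_i^{2}/(1-x_j-x_k+Y_{jk})=(1/4)/(1/2)=1/2$, so the inequality demands $z_{ii}\geq 1/2$, contradicting $Y_{ii}=9/20$. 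The main obstacle is the choice of the point: the combination of a saturated diagonal McCormick bound $Y_{jj}=x_j$ and the tight triangle inequality is what pins $z_{ijk}$ down and forces the RLT lower bound $z_{ii}\geq 1/2$, while the Schur complement still leaves room for a strictly smaller $Y_{ii}$.
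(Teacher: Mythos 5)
Your proposal is correct, and it diverges from the paper's proof in both parts in ways worth noting. For part (i), the paper simply cites Theorem~2 of Anstreicher--Burer ($\QP_2=\C_2^{\rm SDP+MC}$), whereas you derive the $|M|=2$ inequality directly: the $3\times 3$ principal submatrix of the PSD constraint gives $(Y_{ii}-x_i^2)(Y_{jj}-x_j^2)\geq (Y_{ij}-x_ix_j)^2$, and combining this with $Y_{ii}-x_i^2\geq 0$ and the diagonal McCormick bound $Y_{jj}-x_j^2\leq x_j(1-x_j)$ yields exactly the simplified form of~\eqref{newineq}; this is a valid and more self-contained argument (the degenerate case $x_j\in\{0,1\}$ is handled by the closure convention~\eqref{cpersp}, since McCormick then forces $Y_{ij}=x_ix_j$). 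For part (ii), your strategy is structurally identical to the paper's---reduce to $|M|=3$ via \cref{dominance}, exhibit a point of $\C_n^{\rm SDP+MC+Tri}$ whose forced lift has $z_{123}=0$ and violates~\eqref{magic}---but with a different witness: the paper uses $(x_1,x_2,x_3)=(1/4,1/2,1/2)$, $Y_{11}=3/16$, $Y_{23}=1/4$ (violation $3/16<1/4$), while you use $(1/2,1/3,1/3)$, $Y_{11}=9/20$, $Y_{23}=1/6$ (violation $9/20<1/2$). I checked your point: all McCormick and triangle inequalities hold (the last triangle inequality is tight), $Y-xx^\top$ has determinant zero with positive leading $1\times 1$ and $2\times 2$ minors, and $z_{12}=z_{13}=0$ together with $z_{12}-z_{123}\geq 0$ and $z_{123}\geq 0$ pins $z_{123}=0$, after which the last perspective term equals $1/2>9/20$. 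One small loose end: the proposition is stated for a general graph on $n$ nodes, and your counterexample lives in $n=3$; you should pad the point with zeros in the remaining coordinates (as the paper does explicitly in~\eqref{point}) and observe that the extended matrix remains PSD and the additional McCormick/triangle inequalities hold trivially. This is routine and does not affect correctness.
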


\begin{proof}
Part~(i) follows from theorem~2 of~\cite{AnsBur10} in which the authors prove that for $n=2$, we have $\QP_n = \C_n^{\rm SDP+MC}$.  Henceforth, let $n \geq |M| \geq 3$. 
By~\cref{dominance} it suffices to show that $\S^i_M$ for $|M| = 3$ is not implied by $\C^{\rm SDP+MC+Tri}_n$. Without loss of generality, let $i = 1$ and $M=\{1,2,3\}$.
Consider the point:
\begin{eqnarray}\label{point}
    \begin{split}
        & \tilde z_1 = \frac{1}{4}, \; \tilde z_2 = \tilde z_3 = \frac{1}{2}, \; \tilde z_i = 0,\; \forall i \in [n] \setminus \{1,2,3\}\\
        & \tilde z_{11} = \frac{3}{16}, \; \tilde z_{22} = \tilde z_{33}=\frac{1}{2}, \; \tilde z_{ii} = 0, \; \forall i \in [n] \setminus \{1,2,3\}\\
        & \tilde z_{23} = \frac{1}{4}, \; \tilde z_{ij} = 0, \; \forall 1 \leq i < j \leq n, \; (i,j) \neq (2,3).
    \end{split}
\end{eqnarray}
We first show that $\tilde z \in \C^{\rm SDP+MC+Tri}_n$. By direct calculation, it can be verified that $\tilde z$ satisfies both McCormick and triangle inequalities. Moreover, inequalities $z_{ii} \leq z_i$, $i \in [n]$ are satisfied. Hence, to complete the argument, it suffices to show that the following matrix is positive semidefinite:
$$
\A = \begin{bmatrix}
1 & \frac{1}{4} & \frac{1}{2} & \frac{1}{2} \\
\frac{1}{4} & \frac{3}{16} & 0 & 0 \\
\frac{1}{2} & 0 & \frac{1}{2} & \frac{1}{4} \\
\frac{1}{2} & 0 & \frac{1}{4} & \frac{1}{2}
\end{bmatrix}.
$$
To this end, it suffices to factorize $\A$ as $\A=L D L^T$ 
where $L$ is a lower triangular matrix with ones in the diagonal and $D$ is a nonnegative diagonal matrix.
By direct calculation it can be checked that the following 
are valid choices for $L$ and $D$:
$$
L = \begin{bmatrix}
1 & 0 & 0 & 0 \\
\frac{1}{4} & 1 & 0 & 0 \\
\frac{1}{2} & -1 & 1 & 0 \\
\frac{1}{2} & -1 & -1 & 1
\end{bmatrix},
\qquad
D = \begin{bmatrix}
1 & 0 & 0 & 0 \\
0 & \frac{1}{8} & 0 & 0 \\
0 & 0 & \frac{1}{8} & 0 \\
0 & 0 & 0 & 0
\end{bmatrix}.
$$
Therefore, $\tilde z \in \C^{\rm SDP+MC+Tri}_n$.
However, as we show next, $(\tilde z_1, \tilde z_2, \tilde z_3, \tilde z_{12}, \tilde z_{13}, \tilde z_{23})\notin \S^i_M$, implying that $\S^i_M$ is not implied by relaxation $\C^{\rm SDP+MC+Tri}_n$.   

Denote by $(\hat z_1, \hat z_2, \hat z_3, \hat z_{12}, \hat z_{13}, \hat z_{23}, \hat z_{123})$ a point satisfying inequalities~\eqref{support} for $M=\{1,2,3\}$ such that 
$\hat z_1 = \tilde z_1$, $\hat z_2= \tilde z_2$, $\hat z_3 = \tilde z_3$ $\hat z_{12} = \tilde z_{12}$, $ \hat z_{13} = \tilde z_{13}$, $\hat z_{23} = \tilde z_{23}$.
Then substituting $\hat z_{12} = \tilde z_{12} = 0$ in inequalities $\hat z_{12} - \hat z_{123} \geq 0$ and $\hat z_{123} \geq 0$, we obtain $\hat z_{123} = 0$. Now consider inequality~\eqref{newineq} with $i=1$ and $M=\{1,2,3\}$:
\begin{equation}\label{magic}
z_{11} \geq \frac{(z_{123})^2}{z_{23}}+\frac{(z_{12}-z_{123})^2}{z_2-z_{23}}+\frac{(z_{13}-z_{123})^2}{z_3-z_{23}}+\frac{(z_1-z_{12}-z_{13}+z_{123})^2}{1-z_2-z_3+z_{23}}.
\end{equation}
Substituting $\hat z$ in the above inequality we get
$$\frac{3}{16} \not\geq \frac{(\frac{1}{4})^2}{1-\frac{1}{2}-\frac{1}{2}+\frac{1}{4}}=\frac{1}{4}.$$
Therefore, $\hat z$ violates inequality~\eqref{newineq} and this completes the proof.
\end{proof}

Recall that in~\cite{BurLet09}, the authors show that for $n=3$, the relaxation $\C^{\rm SDP+MC+Tri}_n$ strictly contains $\QP_n$.
Interestingly, the proof of~\cref{compareSDP} implies an even stronger result. Namely, when $n=3$, even if we have only one square term $z_{11} = z^2_1$, then the relaxation $\C^{\rm SDP+MC+Tri}_n$ is not tight.

\begin{corollary}
Consider the set:
$$
\S =\Big\{z:
z_{11} = z_1^2, \; z_{ij} = z_i z_j, \; 1 \leq i < j \leq 3, \; 0 \leq z_i \leq 1, \; i \in \{1,2,3\}\Big\}.
$$
Then, $\C^{\rm SDP+MC+Tri}_n$ defined by~\eqref{SDPMCTri} is not an extended formulation for the convex hull of $\S$.
\end{corollary}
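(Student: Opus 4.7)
The plan is to derive this corollary as a direct consequence of the separating construction already produced in the proof of~\cref{compareSDP}. First, I would identify $\conv(\S)$ with $\QP(G)$ for the graph $G=(V,E,L)$ with $V=\{1,2,3\}$, $E=\{\{1,2\},\{1,3\},\{2,3\}\}$, and $L=L^+=\{\{1,1\}\}$. Although $\QP(G)$ is defined using the inequality $z_{11}\geq z_1^2$ whereas $\S$ enforces the equality $z_{11}=z_1^2$, by~\cref{extp} every extreme point of $\QP(G)$ already satisfies $z_{11}=z_1^2$, so the two convex hulls coincide. Consequently,~\cref{convRelax} applied with $i=1$ and $M=N(1)=\{1,2,3\}$ supplies a valid extended system~\eqref{newineq}-\eqref{support} for $\conv(\S)$.

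Next, I would reuse the point $\tilde z$ built in the proof of~\cref{compareSDP}. That proof already verifies $\tilde z\in\C^{\rm SDP+MC+Tri}_n$ through an explicit check of the McCormick, triangle, and diagonal inequalities combined with an $LDL^\top$ factorization certifying positive semidefiniteness; no further work is needed on this side. The same proof also shows that any lift $\hat z_{123}$ consistent with~\eqref{support} must be $0$, because $\tilde z_{12}=0$ together with $z_{12}-z_{123}\geq 0$ and $z_{123}\geq 0$ forces $\hat z_{123}=0$. Substituting this into~\eqref{newineq} reduces it to $\tfrac{3}{16}\geq\tfrac{1}{4}$, which fails. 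Hence the projection of $\tilde z$ onto the variables of $\S$ cannot be lifted to satisfy the valid system~\eqref{newineq}-\eqref{support}, and therefore lies outside $\conv(\S)$.

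Combining the two steps, $\tilde z$ belongs to $\C^{\rm SDP+MC+Tri}_n$ while its natural projection onto the coordinates of $\S$ does not belong to $\conv(\S)$, so $\C^{\rm SDP+MC+Tri}_n$ cannot serve as an extended formulation for $\conv(\S)$. I do not foresee any genuine obstacle: the corollary is essentially a repackaging of the separating point already produced, with the only bookkeeping item being the identification $\conv(\S)=\QP(G)$ noted above.
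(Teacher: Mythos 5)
Your proposal is correct and follows essentially the same route as the paper: it reuses the point $\tilde z$ from the proof of~\cref{compareSDP}, which lies in $\C^{\rm SDP+MC+Tri}_n$ yet cannot be lifted to satisfy the valid system~\eqref{newineq}--\eqref{support}, forcing $\hat z_{123}=0$ and the contradiction $\tfrac{3}{16}\geq\tfrac14$. One minor inaccuracy: $\conv(\S)$ and $\QP(G)$ are not actually equal ($\QP(G)$ is unbounded in the $z_{11}$ direction, so it is not the convex hull of its extreme points), but this is harmless since you only need $\conv(\S)\subseteq\QP(G)$ for the inequalities of~\cref{convRelax} to be valid for $\conv(\S)$, which is exactly how the paper phrases it.
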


\begin{proof}
Consider the point $\hat z$ defined in the proof of~\cref{compareSDP}. 
    The proof follows by noting $\hat z \in \C^{\rm SDP+MC+Tri}_n$, while $\hat z$ violates inequality~\eqref{newineq}, which by~\cref{convRelax} is a valid inequality for the set $\S$.
\end{proof}

\begin{observation}
In~\cite{AnsPug25}, the authors introduced valid inequalities for $\QP_3$, referred to as the \emph{extended triangle inequalities}, which are not implied by $\C_n^{\rm SDP+MC+Tri}$. The first group is given by:
\begin{align}
\begin{split}\label{ETRI1}
   & 2 z_1 + z_{11} - 2 z_{12} - 2 z_{13} + z_{23} \geq 0, \\
   & 2 z_2 - 2 z_{12} + z_{13} + z_{22} - 2 z_{23} \geq 0,\\
   & 2 z_3 + z_{12} - 2 z_{13} -2 z_{23} + z_{33} \geq 0.
   \end{split}
\end{align}
Substituting the point $\tilde z$ defined by~\eqref{point}, in the first inequality we get $\frac{2}{4}+\frac{3}{16}-2(0)-2(0)+\frac{1}{4} = \frac{15}{16} \geq 0$, substituting in the second inequality we get $\frac{2}{2}-2(0)+0+\frac{1}{2}-\frac{2}{4} = 1 \geq 0$, substituting in the third inequality we get $\frac{2}{2}+0-2(0)-\frac{2}{4}+\frac{1}{2} = 1 \geq 0$. Hence $\S^i_M$ with $|M| > 2$ is not implied by inequalities~\eqref{ETRI1}. The second group of inequalities introduced in~\cite{AnsPug25} is given by: 
\begin{align}
\begin{split}\label{ETRI2}
   & 4 z_1 + 4 z_{11} - 4 z_{12} - 4 z_{13} + z_{23} \geq 0, \\
   & 4 z_2 - 4 z_{12} + z_{13} + 4 z_{22} - 4 z_{23} \geq 0,\\
   & 4 z_3 + z_{12} - 4 z_{13} -4 z_{23} +4 z_{33} \geq 0.
   \end{split}
\end{align}   
Substituting the point $\tilde z$ defined by~\eqref{point}, in the first inequality we get $\frac{4}{4} + \frac{12}{16}-4(0)-4(0)+\frac{1}{4} = 2 \geq 0$, substituting in the second inequality we get $\frac{4}{2}-4(0)+0+\frac{4}{2}-\frac{4}{4} = 3 \geq 0$, substituting in the third inequality we get $\frac{4}{2}+0-4(0)-\frac{4}{4}+\frac{4}{2} = 3 \geq 0$. Hence $\S^i_M$ with $|M| > 2$ is not implied by inequalities~\eqref{ETRI2}. The third and last group of linear inequalities introduced in~\cite{AnsPug25} consists of nine inequalities, and by direct calculation it can be checked that the point $\tilde z$ defined by~\eqref{point} satisfies these inequalities as well implying that $\S^i_M$ with $|M| > 2$ is not implied by these inequalities. 
\end{observation}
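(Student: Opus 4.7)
The plan is to reduce the statement to a short arithmetic verification. Since the proof of~\cref{compareSDP} already exhibits the point $\tilde z$ from~\eqref{point} as lying in $\C_n^{\rm SDP+MC+Tri}$ while failing to admit any lift satisfying inequality~\eqref{newineq}, so that $\tilde z \notin \S^i_M$ for $|M|>2$, it suffices to show that $\tilde z$ also satisfies every inequality in each of the three families of extended triangle inequalities introduced in~\cite{AnsPug25}. Once that is done, the convex relaxation obtained by adjoining all extended triangle inequalities to $\C_n^{\rm SDP+MC+Tri}$ still contains $\tilde z$, and therefore cannot imply $\S^i_M$.

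My first step would be to isolate the only coordinates of $\tilde z$ that appear in an extended triangle inequality on indices $\{1,2,3\}$, namely $\tilde z_1=1/4$, $\tilde z_2=\tilde z_3=1/2$, $\tilde z_{11}=3/16$, $\tilde z_{22}=\tilde z_{33}=1/2$, $\tilde z_{23}=1/4$, and $\tilde z_{12}=\tilde z_{13}=0$. Because the extended triangle inequalities from~\cite{AnsPug25} are supported on just these variables, the verification collapses to arithmetic on six scalars.

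For~\eqref{ETRI1}, direct substitution yields left-hand sides $15/16$, $1$, and $1$, all nonnegative; for~\eqref{ETRI2}, substitution yields $2$, $3$, and $3$, again all nonnegative. These dispose of the two explicit groups. The third family from~\cite{AnsPug25} consists of nine additional linear inequalities that are not reproduced in the excerpt; I would copy each of them from the source, substitute the six nonzero values of $\tilde z$ above, and confirm that each resulting rational is nonnegative.

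The main obstacle is simply the bookkeeping of the third group, not any conceptual difficulty: each of the nine evaluations is a short rational computation once the inequality is written down, and the sparsity of $\tilde z$ eliminates most terms. Combining the three families, $\tilde z$ satisfies every extended triangle inequality of~\cite{AnsPug25}; together with $\tilde z \in \C_n^{\rm SDP+MC+Tri}$ and $\tilde z \notin \S^i_M$ already established in the proof of~\cref{compareSDP}, the observation follows.
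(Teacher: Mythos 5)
Your proposal is correct and follows exactly the paper's argument: it reuses the point $\tilde z$ from~\eqref{point} (already shown in the proof of~\cref{compareSDP} to lie in $\C_n^{\rm SDP+MC+Tri}$ but violate~\eqref{newineq}), verifies by direct substitution that $\tilde z$ satisfies the two explicit groups of extended triangle inequalities with the same values ($15/16$, $1$, $1$ and $2$, $3$, $3$), and defers the third group of nine inequalities to a routine check, just as the paper does.
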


\begin{observation}
In~\cite{AnsPug25}, the authors introduced a SOC relaxation for $\QP_3$ by introducing one extended variable. Interestingly, this variable corresponds to the variable $z_{123}$ in our setting. Their SOC relaxation consists of linear inequalities $\ell_3(J,\{1,2,3\}\setminus J) \geq 0$ for all $J \subseteq \{1,2,3\}$ together with the following inequalities:
$$z^2_{123} \leq z_{11} z_{23}, \quad z^2_{123} \leq z_{22} z_{13}, \quad z^2_{123} \leq z_{33} z_{12},
$$
and all their switchings (see definition~2 in~\cite{BurLet09} for the definition of switching for $\QP_n$). Consider the first inequality above. First, consider all switchings of this inequality in which the variable $z_1$ is not switched.
These inequalities are given by
\begin{align*}
    & z^2_{123} \leq z_{11} z_{23} \quad \Leftrightarrow \quad  z_{11} \geq \frac{z^2_{123}}{z_{23}}\\
    & (z_{12}-z_{123})^2 \leq z_{11} (z_3-z_{23}) \quad \Leftrightarrow \quad  z_{11} \geq \frac{(z_{12}-z_{123})^2}{z_3-z_{23}}\\
    & (z_{13}-z_{123})^2 \leq z_{11} (z_2-z_{23}) \quad \Leftrightarrow \quad  z_{11} \geq \frac{(z_{13}-z_{123})^2}{z_2-z_{23}}\\
    & (z_1-z_{12}-z_{13}+z_{123})^2 \leq z_{11}(1-z_2-z_3+z_{23}) \quad \Leftrightarrow \quad  z_{11} \geq \frac{(z_1-z_{12}-z_{13}+z_{123})^2}{1-z_2-z_3+z_{23}}. 
\end{align*}
It is then simple to see that the above four inequalities are implied by inequality~\eqref{magic}. Next, consider all switchings of $z^2_{123} \leq z_{11} z_{23}$ in which the variable $z_1$ is switched. Using a similar line of arguments as above, it follows that the resulting four inequalities are implied by the following inequality which is obtained by switching $z_1$ in inequality~\eqref{magic}:
\begin{align}\label{notmagic}
    z_{11}-2z_1+1 \geq &\frac{(z_{23}-z_{123})^2}{z_{23}}+\frac{(z_2-z_{12}-z_{23}+z_{123})^2}{z_2-z_{23}}+\frac{(z_3-z_{13}-z_{23}+z_{123})^2}{z_3-z_{23}}\nonumber\\
    +&\frac{(1-z_1-z_2-z_3+z_{12}+z_{13}+z_{23}-z_{123})^2}{1-z_2-z_3+z_{23}}. 
\end{align}
Finally, we prove that inequality~\eqref{notmagic} and inequality~\eqref{magic} are equivalent. Denote by $R_1$ the right-hand side of inequality~\eqref{magic} and by $R_2$ the right-hand side of inequality~\eqref{notmagic}. To show the equivalence of the two inequalities, it suffices to show that: 
\begin{equation}\label{goal}
R_2-R_1 =  1-2z_1.
\end{equation}
We then have: 
\begin{align*}
    R_2-R_1 &= \frac{(z_{23}-z_{123})^2-(z_{123})^2}{z_{23}}+\frac{(z_2-z_{12}-z_{23}+z_{123})^2-(z_{12}-z_{123})^2}{z_2-z_{23}}\\
    & +\frac{(z_3-z_{13}-z_{23}+z_{123})^2-(z_{13}-z_{123})^2}{z_3-z_{23}}\\
    &+\frac{(1-z_1-z_2-z_3+z_{12}+z_{13}+z_{23}-z_{123})^2-(z_1-z_{12}-z_{13}+z_{123})^2}{1-z_2-z_3+z_{23}}\\
    & = z_{23}-2z_{123}+z_2-2z_{12}-z_{23}+2z_{123}+z_3-2z_{13}-z_{23}+2 z_{123}+1-2z_1-z_2-z_3\\
    &+2z_{12}+2z_{13}+z_{23}-2z_{123}\\
    & =1-2z_1
\end{align*}
Therefore, equation~\eqref{goal} is valid. We then conclude that the SOC relaxation of~\cite{AnsPug25} is implied by inequalities~\eqref{newineq} and~\eqref{support} with $|M| =3$.
\end{observation}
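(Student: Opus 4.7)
The plan is to verify that each inequality in the AnsPug25 SOC relaxation of $\QP_3$ is implied by~\eqref{newineq} and~\eqref{support} with $|M|=3$. The AnsPug25 relaxation has three families: the linear inequalities $\ell_3(J, \{1,2,3\}\setminus J) \geq 0$, three base conic inequalities $z_{123}^2 \leq z_{ii} z_{jk}$ for $i \in \{1,2,3\}$ and $\{j,k\} = \{1,2,3\}\setminus\{i\}$, and all their switchings. The linear family coincides with our~\eqref{support}, so nothing is to show there. By symmetry it suffices to handle the switchings of $z_{123}^2 \leq z_{11} z_{23}$ using~\eqref{newineq} with $i=1$ and $M = \{1,2,3\}$; the other two base inequalities are dispatched analogously by applying~\eqref{newineq} for $i=2$ and $i=3$.

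The switchings of $z_{123}^2 \leq z_{11} z_{23}$ split according to whether $z_1$ is flipped. For the four switchings leaving $z_1$ untouched, each takes the form $z_{11} \geq (\ell_3(J, M\setminus J))^2 / \ell_2(J \setminus \{1\}, M\setminus J)$ for a suitable $J \subseteq M$ with $J \ni 1$, which is precisely one of the four summands on the right-hand side of~\eqref{newineq}. Because~\eqref{support} forces every $\ell_2$ to be non-negative, every summand of~\eqref{newineq} is non-negative, so the whole sum dominates any single summand and the AnsPug25 inequality follows immediately.

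For the four switchings that flip $z_1$, the lifted version of $(1-z_1)^2$ is $z_{11} - 2z_1 + 1$, so these inequalities take the form $z_{11} - 2z_1 + 1 \geq (\ell_3(K, (M'\setminus K)\cup\{1\}))^2 / \ell_2(K, M'\setminus K)$ with $M' := M \setminus \{1\}$ and $K \subseteq M'$. I would introduce the analogue of~\eqref{newineq} obtained by re-running the derivation of \cref{convRelax} starting from $(1-z_1)^2 \leq z_{11} - 2z_1 + 1$ (valid on $\QP(G)$ because $z_{11} \geq z_1^2$) instead of $z_1^2 \leq z_{11}$. This yields $z_{11} - 2z_1 + 1 \geq \sum_{K \subseteq M'}(\ell_3(K,(M'\setminus K)\cup\{1\}))^2 / \ell_2(K, M'\setminus K)$, of which each $z_1$-switched AnsPug25 inequality is one non-negative summand, so the same domination argument closes the case.

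To complete the picture, one checks that this switched analogue is actually equivalent to, not merely alongside,~\eqref{newineq} itself. I would pair each summand $(d-b)^2/d$ of the switched right-hand side with the summand $b^2/d$ of the original, where $d = \ell_2(J\setminus\{1\}, M\setminus J)$ is common to both and the two numerators sum to $d$. The identity $(d-b)^2 - b^2 = d(d-2b)$ makes each pairwise difference equal $d - 2b$. Summing over $J \ni 1$, the RLT partition-of-unity $\sum_{K \subseteq M'} \ell_2(K, M'\setminus K) = 1$ together with the aggregation $\sum_{J \ni 1}\ell_3(J, M\setminus J) = z_1$ (both instances of the telescoping argument in~\cref{remark:dmnc}) collapse the sum of differences to $1 - 2z_1$. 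The main obstacle is spotting these RLT aggregation identities; without them the computation becomes an opaque term-by-term expansion, while with them the equivalence reduces to a three-line calculation.
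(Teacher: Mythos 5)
Your proposal is correct and follows essentially the same route as the paper: the unswitched inequalities are individual summands of~\eqref{magic}, the $z_1$-switched ones are summands of~\eqref{notmagic}, and the two inequalities are reconciled by showing their right-hand sides differ by exactly $1-2z_1$. Your pairing identity $(d-b)^2-b^2=d(d-2b)$ combined with the RLT aggregation identities is just a cleaner packaging of the paper's term-by-term expansion, which performs the same cancellation explicitly.
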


\section{Convex hull characterizations}
\label{sec: convexHull}

In this section, we examine the tightness of our proposed convexification technique.  As we prove shortly, given a graph $G=(V,E,L)$, as long as the plus loops are located on non-adjacent nodes of $G$, the set $\QP(G)$ is SOC-representable. 
We use the following lemma regarding a property of product factors to prove our first convex hull characterization.

\begin{lemma}\label{keylemma}
Let $G=(V,E)$ be a complete hypergraph.
For any $J \subseteq V$, consider inequality~\eqref{eq:rlt}. Let $j \notin V$. Then we have the following:
\medskip
\begin{itemize}
\item [(i)] By replacing each variable $z_p$ in inequality~\eqref{eq:rlt} by the variable $z_{p\cup\{j\}}$ for all $p \in \emptyset \cup V \cup E$, we obtain 
$$\ell_{n+1}(J \cup\{j\}, V \setminus J) \geq 0.$$
\item [(ii)] By replacing the variable $z_p$ in inequality~\eqref{eq:rlt} by the expression $z_p- z_{p\cup\{j\}}$ for all $p \in \emptyset \cup V \cup E$, we obtain $$\ell_{n+1}(J, V \cup\{j\}\setminus J) \geq 0.$$
\end{itemize}
\end{lemma}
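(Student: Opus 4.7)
The plan is to derive both identities by direct expansion of the definition $\ell_d(J_1,J_2) = \sum_{t \subseteq J_2}(-1)^{|t|} z_{J_1 \cup t}$ from~\eqref{rlteq}, using only the hypothesis $j \notin V$ together with the fact that every subset of $(V\setminus J)\cup\{j\}$ is uniquely either of the form $t$ or $t\cup\{j\}$ for some $t\subseteq V\setminus J$. Intuitively, the substitutions in parts~(i) and~(ii) correspond to multiplying the underlying polynomial factor $\prod_{i\in J}z_i \prod_{i\in V\setminus J}(1-z_i)$ by $z_j$ and by $(1-z_j)$, respectively, so the outputs must be RLT factors of one higher order.

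For part~(i), I would expand
$$\ell_n(J, V\setminus J) = \sum_{t \subseteq V\setminus J}(-1)^{|t|}\, z_{J \cup t},$$
and then apply the termwise substitution $z_p \mapsto z_{p\cup\{j\}}$. Recalling the convention $z_\emptyset=1$ so that $z_{\emptyset\cup\{j\}} = z_j$, the result reads $\sum_{t \subseteq V\setminus J}(-1)^{|t|}\, z_{(J \cup\{j\})\cup t}$. Since $(J\cup\{j\})\cap(V\setminus J)=\emptyset$ and $|J\cup\{j\}|+|V\setminus J|=n+1$, this is precisely the expansion of $\ell_{n+1}(J\cup\{j\}, V\setminus J)$, giving~(i) by relabeling.

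For part~(ii), the substitution $z_p \mapsto z_p - z_{p\cup\{j\}}$ applied to the same starting expansion yields
$$\sum_{t\subseteq V\setminus J}(-1)^{|t|}\bigl(z_{J\cup t} - z_{J\cup t\cup\{j\}}\bigr).$$
I would match this with $\ell_{n+1}(J,(V\cup\{j\})\setminus J) = \sum_{s \subseteq (V\setminus J)\cup\{j\}}(-1)^{|s|}\, z_{J\cup s}$ by partitioning the summation index $s$ according to whether $j\in s$: the case $j\notin s$ contributes $s=t$ with sign $(-1)^{|t|}$, and the case $j\in s$ contributes $s=t\cup\{j\}$ with sign $(-1)^{|t|+1}=-(-1)^{|t|}$. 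Grouping the two contributions reproduces the displayed expression, which proves~(ii).

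The argument is essentially bookkeeping on the definition~\eqref{rlteq}, so I do not anticipate any serious obstacle. The only points worth double-checking are that the hypothesis $j\notin V$ makes the set-theoretic partition used in~(ii) well-defined and uniquely determined, and that the resulting index sets in both parts have cardinality $n+1$, so that the notation $\ell_{n+1}(\cdot,\cdot)$ is consistent with~\eqref{rlteq}.
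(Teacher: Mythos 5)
Your proposal is correct and follows essentially the same route as the paper: both arguments reduce to partitioning the subsets of $(V\cup\{j\})\setminus J$ according to whether they contain $j$ and tracking the resulting sign, the only cosmetic difference being that the paper expands the polynomial factor $f_{n+1}$ and then linearizes, whereas you manipulate the linearized sums $\ell_n$, $\ell_{n+1}$ directly. The points you flag for double-checking (the convention $z_{\emptyset\cup\{j\}}=z_j$ and the cardinality count $|J\cup\{j\}|+|V\setminus J|=n+1$) are exactly the details that make the identification with~\eqref{rlteq} legitimate, so nothing is missing.
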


\begin{proof}
First, consider part~(i); we have:
$$
f_{n+1}(J \cup \{j\}, V \setminus J) = \sum_{t\subseteq V \setminus J}{(-1)^{|t|} z_{j}\prod_{i\in J}{z_i}\prod_{i\in t}{z_i}}.
$$ 
It then follows that
$\ell_{n+1}(J \cup \{j\}, V \setminus J) = \sum_{t\subseteq V \setminus J}{(-1)^{|t|} z_{\{j\} \cup J \cup t}}$. Therefore, by~\eqref{rlteq}, inequality $\ell_{n+1}(J \cup \{j\}, V \setminus J) \geq 0$ is obtained by replacing each variable $z_p$ by the variable $z_{p\cup\{j\}}$ in inequality $\ell_{n}(J, V \setminus J) \geq 0$. Recall that we define $z_{\emptyset}: = 1$. 

Next, consider part~(ii); we have:
\begin{align*}
   f_{n+1}(J, V\cup \{j\} \setminus J) = & \sum_{t\subseteq V\cup\{j\}\setminus J}{(-1)^{|t|} \prod_{i\in J}{z_i}\prod_{i\in t}{z_i}}\\
   = & \sum_{t\subseteq V \setminus J}{(-1)^{|t|}\prod_{i\in J}{z_i}\prod_{i\in t}{z_i}}+\sum_{t\cup\{j\}: t\subseteq V \setminus J}{(-1)^{|t|+1} z_{j}\prod_{i\in J}{z_i}\prod_{i\in t}{z_i}}\\
   = &\sum_{t\subseteq V \setminus J}{(-1)^{|t|}\Big(\prod_{i\in J}{z_i}\prod_{i\in t}{z_i}-z_{j}\prod_{i\in J}{z_i}\prod_{i\in t}{z_i}\Big)}.
\end{align*}
It then follows that
$$
\ell_{n+1}(J, V \cup \{j\}\setminus J) = \sum_{t\subseteq V \setminus J}{(-1)^{|t|}(z_{J \cup t}-z_{\{j\} \cup J \cup t})}.
$$ 
Therefore, by~\eqref{rlteq}, inequality $\ell_{n+1}(J, V\cup \{j\} \setminus J) \geq 0$ is obtained by replacing each variable $z_p$ by the expression $z_p-z_{p\cup\{j\}}$ in inequality $\ell_{n}(J, V \setminus J) \geq 0$. 
\end{proof}

We are now ready to give our first convex hull characterization.
This result serves as a generalization of~\cref{prop:RLT} to the case where the hypergraph $G$ has one plus loop.

\begin{theorem}\label{convres}
    Let $G=(V,E, L)$ be a complete hypergraph with $n$ nodes.
    Suppose that $L=L^+=\{j, j\}$ for some $j \in V$. Then an explicit description for the set $\PP(G)$ defined by~\eqref{extset} is given by:
    \begin{eqnarray}
    \begin{split}\label{convHull}
      & z_{jj} \geq \sum_{J \subseteq V: J \ni \{j\}}\frac{(\ell_n(J, V\setminus J))^2}{\ell_{n-1}(J\setminus \{j\},V\setminus J)}\\
            & \ell_n(J, V \setminus J) \geq 0,  \qquad \forall J \subseteq V.
      \end{split}
    \end{eqnarray}
\end{theorem}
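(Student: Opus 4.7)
The plan is to prove the theorem by explicit convex combination, using the fact that the inequalities $\ell_n(J, V \setminus J) \geq 0$ for $J \subseteq V$ already describe the multilinear polytope of the loopless part.

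Validity of~\eqref{convHull} for $\PP(G)$ is an immediate consequence of Proposition~\ref{convRelax} applied with $M = V$, so the entire content of the proof is the reverse direction: any $(\bar z, \bar z_{jj})$ satisfying~\eqref{convHull} lies in $\PP(G)$. The key structural observation is that the loopless hypergraph $G_0 = (V,E)$ is complete, so by Proposition~\ref{prop:RLT}, its multilinear polytope $\MP(G_0)$ is a simplex with $2^n$ vertices $\chi^J$ indexed by $J \subseteq V$ (where $\chi^J_p = \prod_{i \in p} \mathbb{1}[i \in J]$), and the linear functionals $\lambda_J := \ell_n(J, V \setminus J)$ evaluated at $\bar z$ are precisely the (nonnegative, unit-sum) barycentric coordinates of the projection of $\bar z$ onto the $z_p$, $p \in V \cup E$ coordinates.

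Next I would group the $2^n$ vertices into $2^{n-1}$ pairs by $j$-status. For each $J' \subseteq V \setminus \{j\}$, pair $\chi^{J'}$ with $\chi^{J' \cup \{j\}}$; these vertices agree on all coordinates except $z_j$. Let
\[
\mu_{J'} := \lambda_{J'} + \lambda_{J' \cup \{j\}}, \qquad t_{J'} := \frac{\lambda_{J' \cup \{j\}}}{\mu_{J'}} \text{ if } \mu_{J'} > 0, \text{ else } t_{J'} := 0,
\]
and define a point $y^{J'} \in \PP(G)$ by $y^{J'}_j = t_{J'}$, $y^{J'}_i = \mathbb{1}[i \in J']$ for $i \ne j$, $y^{J'}_{jj} = t_{J'}^2$, with the remaining coordinates set to the corresponding products. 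By Lemma~\ref{extp} each $y^{J'}$ is an extreme point of $\PP(G)$. The identity
\[
\ell_{n-1}(J \setminus \{j\}, V \setminus J) = \ell_n(J, V \setminus J) + \ell_n(J \setminus \{j\}, (V \setminus J) \cup \{j\}) = \mu_{J \setminus \{j\}},
\]
which is the special case of Observation~\ref{remark:dmnc} with $k = j$, aligns the denominators in~\eqref{convHull} with my pairing.

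I would then verify by direct calculation that $\sum_{J'} \mu_{J'} = 1$, that $\sum_{J'} \mu_{J'} y^{J'}$ reproduces $\bar z$ on every coordinate $z_p$ with $p \in V \cup E$, and that
\[
\sum_{J' \subseteq V \setminus \{j\}} \mu_{J'} (y^{J'}_{jj}) = \sum_{J' \subseteq V \setminus \{j\}} \frac{\lambda_{J' \cup \{j\}}^2}{\mu_{J'}},
\]
matching the right-hand side of the first inequality of~\eqref{convHull} exactly. Thus if $\bar z_{jj}$ equals this lower bound, I have written $(\bar z, \bar z_{jj})$ as a convex combination of extreme points of $\PP(G)$; if $\bar z_{jj}$ strictly exceeds the bound, I add the excess along the recession direction $e_{jj}$ of $\PP(G)$ (legal since $z_{jj}$ is bounded below but not above), again landing inside $\PP(G)$.

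The main obstacle will be the degenerate case $\mu_{J'} = 0$. Here both $\lambda_{J'}$ and $\lambda_{J' \cup \{j\}}$ vanish, so the corresponding pair contributes nothing to any coordinate and can be dropped from the convex combination; this matches the closure convention in~\eqref{cpersp} under which the offending term $\lambda_{J' \cup \{j\}}^2/\mu_{J'}$ equals $0$. A minor secondary check is that the $t_{J'}$ values lie in $[0,1]$, which follows from $\lambda_{J' \cup \{j\}} \leq \mu_{J'}$. Once these are handled, the verification of each coordinate equality amounts to a reindexing $\sum_{J \subseteq V} = \sum_{J' \subseteq V \setminus \{j\}} (\text{contribution from } J' \text{ and } J' \cup \{j\})$, which is routine.
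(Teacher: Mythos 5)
Your proposal is correct, and it proves \cref{convres} by a genuinely different route than the paper. The paper argues by induction on $n$: it picks a loopless node $k$, writes $\PP(G)=\conv(\PP^0(G)\cup\PP^1(G))$ for the faces $z_k=0$ and $z_k=1$, invokes the induction hypothesis on the complete hypergraph over $V\setminus\{k\}$, forms the Balas disjunctive extended formulation, and then projects out all auxiliary variables by hand, using \cref{keylemma} to recognize the projected inequalities as the level-$n$ RLT system. You instead give a direct, non-inductive certificate: the constraints $\ell_n(J,V\setminus J)\geq 0$ place the loopless part of $\bar z$ in the simplex $\MP$ of the complete hypergraph, whose barycentric coordinates are exactly the $\lambda_J$; pairing vertices by the status of $j$ and using the splitting identity $\ell_{n-1}(J\setminus\{j\},V\setminus J)=\lambda_J+\lambda_{J\setminus\{j\}}$ (the $d\to d+1$ recursion of \cref{remark:dmnc}), you exhibit $\bar z$ explicitly as a convex combination of the extreme points from \cref{extp} plus a nonnegative multiple of the recession direction $e_{jj}$. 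All the pieces check out: the coordinate identities $\sum_{J\supseteq p}\lambda_J=z_p$, the bound $t_{J'}\in[0,1]$, the degenerate case $\mu_{J'}=0$ matching the closure convention~\eqref{cpersp}, and the legitimacy of the recession direction (which is exactly the cone $\S_\infty$ from the paper's closedness lemma). The one point worth flagging is cosmetic: you cite \cref{convRelax} for validity, but that proposition is stated for graphs rather than complete hypergraphs with all $z_J$ present as genuine coordinates; the validity computation there transfers verbatim, but you should say so. What each approach buys: the paper's induction reuses a projection engine (\cref{keylemma}) and mirrors how such descriptions are typically discovered, whereas your argument is shorter, avoids disjunctive programming entirely, and makes the tightness of the bound on $z_{jj}$ completely transparent by naming the extremal convex combination that attains it.
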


\begin{proof}
    The proof is by induction on the number of nodes $n$ in $V$. In the base case, \ie $n=1$, the hypergraph $G$ consists of a single node $j$ and a single plus loop $\{j, j\}$ and the set $\PP(G)$ is given by $\PP(G)=\conv\{(z_{j}, z_{jj}): \; z_{jj} \geq z^2_{j},\; z_{j} \in [0,1]\}$. In this case, we have 
    $$
    \ell_1(\{j\}, \emptyset) = z_{j}, \;\;
    \ell_1(\emptyset, \{j\}) = 1-z_{j}, \;\; \ell_0(\{j\}\setminus \{j\}, \emptyset) = 1.
    $$ 
    Therefore, inequalities~\eqref{convHull} simplify to $z_{jj} \geq \frac{z^2_{j}}{1} = z^2_{j}$, $z_{j} \geq 0$, and
    $1-z_{j} \geq 0$, which clearly coincides with $\PP(G)$.

     Henceforth, let $n \geq 2$. Let $k \in V \setminus \{j\}$. 
     Let $\PP^0(G)$ (resp. $\PP^1(G)$) denote the face of $\PP(G)$ defined by $z_{k} = 0$ (resp. $z_{k} = 1$). 
    Since by~\cref{lemma:closed}, $\PP(G)$ is a closed convex set, 
     by~\cref{extp} at every extreme point of $\PP(G)$ we have $z_{k} \in \{0,1\}$ and at every extreme direction of $\PP(G)$ we have $z_k = 0$, we deduce that:
     \begin{equation}\label{convdisj}
      \PP(G) = \conv(\PP^0(G) \cup \PP^1(G)).    
     \end{equation}
     Let $\bar G = (V \setminus \{k\}, \bar E, L)$, be a complete hypergraph; \ie $\bar E$ consists of all subsets of $V \setminus \{k\}$ of cardinality at least two, and let $L=L^+=\{j, j\}$. Since the hypergraph $\bar G$ is complete with a plus loop and with one fewer node than the hypergraph $G$, by the induction hypothesis, an explicit description for $\PP(\bar G)$ is given by:     
    \begin{align*}
      & z_{jj} \geq \sum_{\substack{J \subseteq V \setminus \{k\}:\\ J \ni \{j\}}}\frac{(\ell_{n-1}(J, V\setminus (J\cup \{k\})))^2}{\ell_{n-2}(J\setminus \{j\},V\setminus (J\cup \{k\}))}\\
            & \ell_{n-1}(J, V\setminus (J\cup \{k\})) \geq 0,  \qquad \forall J \subseteq V \setminus \{k\}.
    \end{align*}
Denote by $\bar z$ the vector consisting of $z_v$, $v \in V \setminus \{k\}$ and $z_e$ for all $e \in \bar E \cup L$.
It then follows that
\begin{align*}
& \PP^0(G) = \{z \in \R^{V\cup E \cup L}: z_{k} = 0, \; z_e = 0, \; \forall e \in E: e \ni k, \; \bar z \in \PP(\bar G)\},\\
& \PP^1(G) = \{z \in \R^{V\cup E \cup L}: z_{k} = 1, \; z_e = z_{e \setminus \{k\}}, \; \forall e \in E: e \ni k,\; \bar z \in \PP(\bar G)\}.
\end{align*}
By~\eqref{convdisj}, and using the disjunctive programming technique~\cite{Bal85,Roc70}, it follows that $\PP(G)$ is the projection onto the space of variables $z_v$, $v \in V$, and $z_e$, $e \in E \cup L$ of the system~\eqref{sys1}-~\eqref{sys3}:
\begin{align}
    \begin{split}\label{sys1}
        & \lambda_0 + \lambda_1 = 1, \; \lambda_0 \geq 0, \; \lambda_1 \geq 0 \\
        & z_v = z^0_v + z^1_v, \; \forall v \in V \\
        & z_e = z^0_e + z^1_e, \; \forall e \in E\\
        & z_{jj} = z^0_{jj} + z^1_{jj}
    \end{split}
\end{align}

\begin{align}
    \begin{split}\label{sys2}
     & z^0_{k} = 0 \\
     & z^0_{e} = 0, \; \forall e \in E: e\ni k\\ 
     & \ell^0_{n-1}(J, V\setminus (J\cup \{k\})) \geq 0,  \qquad \forall J \subseteq V \setminus \{k\}\\
     & z^0_{jj} \geq \sum_{\substack{J \subseteq V \setminus \{k\}:\\ J \ni \{j\}}}\frac{(\ell^0_{n-1}(J, V\setminus (J\cup \{k\})))^2}{\ell^0_{n-2}(J\setminus \{j\},V\setminus (J\cup \{k\}))},
    \end{split}
\end{align}
where we define $\ell^0_{n}(J, V\setminus J) := \sum_{t: t\subseteq V \setminus J}{(-1)^{|t|} z^0_{J \cup t}}$, and $z^0_{\emptyset} := \lambda_0$; \ie $\ell^0_{n}(\cdot, \cdot)$ is obtained from $\ell_{n}(\cdot, \cdot)$ by replacing $z_v$ with $z^0_v$ for all $v \in V$, $z_e$ with $z^0_e$ for all $e \in E$ and $z_{\emptyset}$,  with $z^0_{\emptyset}$.
\begin{align}
    \begin{split}\label{sys3}
     & z^1_{k} = \lambda_1\\
     & z^1_{e} = z^1_{e \setminus \{k\}}, \; \forall e \in E: e\ni k\\ 
     & \ell^1_{n-1}(J, V\setminus (J\cup \{k\})) \geq 0,  \qquad \forall J \subseteq V \setminus \{k\}\\
     & z^1_{jj} \geq \sum_{\substack{J \subseteq V \setminus \{k\}:\\ J \ni \{j\}}}\frac{(\ell^1_{n-1}(J, V\setminus (J\cup \{k\})))^2}{\ell^1_{n-2}(J\setminus \{j\},V\setminus \{J\cup \{k\}\})},
    \end{split}
\end{align}
where we define $\ell^1_{n}(J, V\setminus J) := \sum_{t: t\subseteq V \setminus J}{(-1)^{|t|} z^1_{J \cup t}}$, and $z^1_{\emptyset} := \lambda_1$;  \ie $\ell^1_{n}(\cdot, \cdot)$ is obtained from $\ell_{n}(\cdot, \cdot)$ by replacing $z_v$ with $z^1_v$ for all $v \in V$, $z_e$ with $z^1_e$ for all $e \in E$ and $z_{\emptyset}$, with $z^1_{\emptyset}$. Notice that to obtain the above formulation, we made use of the fact that $\ell_n(J, V \setminus J)$ is homogeneous in the variables, $z_{J \cup t}$ for all $t \subseteq V \setminus J$, where we define $z_{\emptyset} = 1$.

\medskip 

In the remainder of the proof we project out variables $\lambda_0, \lambda_1, z^0, z^1$ from the system~\eqref{sys1}-~\eqref{sys3} to obtain the description of $\PP(G)$ in the original space. From $z_{k} = z^0_{k} + z^1_{k}$, $z^0_{k} = 0$, and $z^1_{k} = \lambda_1$ it follows that
\begin{equation}\label{first}
\lambda_0 = 1-z_{k}, \quad \lambda_1 = z_{k}.
\end{equation}
By $z_e = z^0_e + z^1_e$ for all $e \in E$, $z^0_e = 0$ and $z^1_e = z^1_{e\setminus \{k\}}$ for all $e \in E$ such that $e \ni k$, we get:
\begin{align}
  &  z^1_e = z^1_{e \setminus \{k\}} = z_e \quad \forall e \in E: e \ni k\label{second}\\ 
  &  z^0_{e \setminus \{k\}} = z_{e \setminus \{k\}} -z_e \quad \forall e \in E: e \ni k\label{third}
\end{align}
We use equations~\eqref{first} to project out $\lambda_0$ and $\lambda_1$, equations~\eqref{second} to project out $z^1_v$ for all $v \in V \setminus \{k\}$ and for all $z^1_e$ for all $e \in \bar E$, and we use equations~\eqref{third} to project out variables $z^0_v$ for all $v \in V \setminus \{k\}$ and $z^0_e$ for all $e \in \bar E$. By part(i) of~\cref{keylemma}, replacing all variables $z^1_p$ with $z_{p \cup \{k\}}$ in inequality
$\ell^1_{n-1}(J, V\setminus (J\cup \{k\})) \geq 0$, we obtain
$\ell_{n}(J \cup \{k\}, V\setminus (J\cup \{k\})) \geq 0$
for any $J \subseteq V \setminus \{k\}$. Moreover, by part(ii) of~\cref{keylemma}, replacing all variables $z^0_p$ with $z_p-z_{p \cup \{k\}}$ in inequality $\ell^0_{n-1}(J, V\setminus (J\cup \{k\})) \geq 0$, we get $\ell_{n}(J, V\setminus J) \geq 0$ for all $J \subseteq V \setminus \{k\}$.
Notice that inequalities $\ell_{n}(J \cup \{k\}, V\setminus (J\cup \{k\})) \geq 0$
for any $J \subseteq V \setminus \{k\}$ together with inequalities $\ell_{n}(J, V\setminus J) \geq 0$ for all $J \subseteq V \setminus \{k\}$ can be equivalently written as inequalities $\ell_n(J, V \setminus J) \geq 0$ for all $J \subseteq V$.
Finally, we use $z_{jj} = z^0_{jj}+ z^1_{jj}$ to project out $z^0_{jj}$.
Hence, system~\eqref{sys1}-\eqref{sys3} simplifies to:
\begin{align*}
      & \ell_n(J, V \setminus J) \geq 0,  \qquad \forall J \subseteq V\\
      &    z_{jj}-z^1_{jj} \geq  \sum_{\substack{J \subseteq V \setminus \{k\}:\\ J \ni \{j\}}}\frac{(\ell_{n}(J, V\setminus J))^2}{\ell_{n-1}(J\setminus \{j\},V\setminus J)}\\
    & z^1_{jj} \geq \sum_{\substack{J \subseteq V \setminus \{k\}:\\ J \ni j}}\frac{(\ell_{n}(J\cup\{k\}, V\setminus (J\cup \{k\})))^2}{\ell_{n-1}(J\cup\{k\}\setminus \{j\},V\setminus (J \cup \{k\}))}= \sum_{\substack{J \subseteq V:\\ J \supseteq \{j,k\}}}\frac{(\ell_{n}(J, V\setminus J))^2}{\ell_{n-1}(J\setminus \{j\},V\setminus J)}.
\end{align*}
Finally, projecting out the variable $z^1_{jj}$, we obtain: 
\begin{align*}
       \ell_n(J, V \setminus J) & \geq 0,  \qquad \forall J \subseteq V\\
       z_{jj} &\geq  \sum_{\substack{J \subseteq V \setminus \{k\}:\\ J \ni \{j\}}}\frac{(\ell_{n}(J, V\setminus J))^2}{\ell_{n-1}(J\setminus \{j\},V\setminus J)}+  \sum_{\substack{J \subseteq V:\\ J \supseteq \{j,k\}}}\frac{(\ell_{n}(J, V\setminus J))^2}{\ell_{n-1}(J\setminus \{j\},V\setminus J)} \\
       & = \sum_{\substack{J \subseteq V:\\ J \ni \{j\}}}\frac{(\ell_{n}(J, V\setminus J))^2}{\ell_{n-1}(J\setminus \{j\},V\setminus J)}.
\end{align*}
Therefore, the statement follows.
\end{proof}

\begin{observation}
   In~\cref{convres} it is necessary to assume that the complete hypergraph $G$ contains exactly one plus loop. To see this, let us revisit the derivation of the first inequality in~\eqref{convHull}. As explained in the proof of~\cref{convRelax}, this inequality is obtained by multiplying the inequality $z_{jj} \geq z^2_j$ by the identity
   $$\sum_{K \subseteq M \setminus \{i\}}{\prod_{j\in K}{z_j}\prod_{j \in M \setminus (K \cup \{i\})}(1-z_j)} = 1.$$
   That is, the derivation of the first inequality in~\eqref{convHull} relies on considering only one node with a plus loop. Indeed, the well-known result of Burer and Anstreicher~\cite{AnsBur10}, which states that $\QP_2 = \C^{\rm SDP+MC}_2$, implies that if the complete hypergraph $G$ contains two plus loops, then the formulation of $\PP(G)$ must include inequalities involving variables corresponding to both plus loops. Moreover, this result suggests that, in this case, $\PP(G)$ may not be SOC-representable.  
\end{observation}

Thanks to~\cref{obsxxx},~\cref{minusloops}, and~\cref{convres}, we obtain an extended formulation for $\QP(G)$, where $G$ is a complete graph with one plus loop. 

\begin{corollary}\label{completeG}
    Let $G=(V,E,L)$, $L = L^- \cup L^+$ be a complete graph with $L^+=\{j, j\}$ for some $j \in V$. Define $n:= |V|$. Then inequalities~\eqref{harmless} together with inequalities~\eqref{convHull} define an extended formulation for $\QP(G)$.
\end{corollary}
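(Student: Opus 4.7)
The plan is to assemble \cref{completeG} by chaining together three of the prior results: \cref{minusloops}, \cref{obsxxx}, and \cref{convres}. Since $G$ is a graph (not a hypergraph) with edges of cardinality two, one has $\PP(G) = \QP(G)$ directly from the definitions, so it suffices to produce an extended formulation for $\PP(G)$.

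First I would peel off the minus loops. Writing $L = L^- \cup L^+$ with $L^+ = \{\{j,j\}\}$, I apply \cref{minusloops} to reduce the problem to producing a formulation for $\PP(G')$, where $G' = (V, E, L^+)$, and then appending the inequalities \eqref{harmless} for each $\{i,i\} \in L^-$. Thus the task reduces to describing $\PP(G')$, where $G'$ is a complete graph on $V$ with a single plus loop at $j$ and no minus loops.

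Next I would embed $G'$ into the complete hypergraph on $V$ with the same plus loop. Formally, let $\hat G = (V, \hat E, L^+)$, where $\hat E$ consists of all subsets of $V$ of cardinality at least two; then $E \subseteq \hat E$, so by \cref{obsxxx} any formulation for $\PP(\hat G)$ is an extended formulation for $\PP(G')$ (the extra hyperedge variables $z_e$ for $|e| \geq 3$ play the role of auxiliary variables). Now \cref{convres} applies to $\hat G$ and yields precisely the system \eqref{convHull} as an explicit description of $\PP(\hat G)$.

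Combining these three observations, the system consisting of \eqref{convHull} together with \eqref{harmless} for each minus loop is an extended formulation for $\PP(G) = \QP(G)$, proving the corollary. There is no real obstacle here beyond bookkeeping: the conceptual content is already supplied by \cref{convres}, and the corollary is essentially the statement that \cref{minusloops} and \cref{obsxxx} let us transfer that description from the complete-hypergraph setting with one plus loop to the complete-graph setting with one plus loop and arbitrary minus loops.
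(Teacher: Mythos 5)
Your proposal is correct and follows exactly the route the paper intends: the paper gives no separate proof of this corollary, stating only that it follows from \cref{obsxxx}, \cref{minusloops}, and \cref{convres}, which is precisely the chain you spell out (strip minus loops via \cref{minusloops}, embed the complete graph into the complete hypergraph via \cref{obsxxx}, then invoke \cref{convres}). Your writeup just makes the bookkeeping explicit.
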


The next example illustrates the application of the proposed extended formulation.

\begin{example}
Let $G=(V,E,L)$ be a graph with $V=\{1,2,3\}$, $E=\{\{1,2\},\{1,3\}, \{2,3\}\}$, $L=L^- \cup L^+$, $L^+=\{\{1,1\}\}$ and $L^-=\{\{2,2\}\}$. In this case, by~\cref{completeG}, an extended formulation for $\QP(G)$ is given by:
    \begin{align*}
    & z_{11} \geq \frac{(z_{123})^2}{z_{23}}+\frac{(z_{12}-z_{123})^2}{z_2-z_{23}}+\frac{(z_{13}-z_{123})^2}{z_3-z_{23}}+\frac{(z_1-z_{12}-z_{13}+z_{123})^2}{1-z_2-z_3+z_{23}} \\
    & z_{123} \geq 0, \; \; z_{12}-z_{123} \geq 0, \;\; z_{13}-z_{123} \geq 0, \;\; z_{23}-z_{123} \geq 0 \\ 
    & z_1-z_{12}-z_{13}+z_{123} \geq 0, \; z_2-z_{12}-z_{23}+z_{123} \geq 0, \; z_3-z_{13}-z_{23}+z_{123} \geq 0\\
    & 1-z_1-z_2-z_3+z_{12}+z_{13}+z_{23}-z_{123} \geq 0\\
    & z_{22} \leq z_2, \; 0 \leq z_2 \leq 1.
    \end{align*}
    $\diamond$
\end{example}


Recall that for a graph, a \emph{stable set} is a subset of nodes such that no two nodes are adjacent. For a hypergraph $G=(V,E)$, we say that $V' \subseteq V$ is a \emph{stable set} of $G$, if there exist no two nodes $u,v \in V'$ such that $u,v \in e$ for some $e \in E$. We should remark that our definition of stable sets for hypergraphs is different from the standard definition. 
For a graph (resp. hypergraph) with loops $G=(V,E,L)$ we define the stable set of $G$ as the stable set of the corresponding loopless graph (resp. hypergraph); \ie $(V,E)$.
Thanks to the decomposability result of~\cref{cor: decomp},  the next theorem implies that if the subset of nodes associated with the plus loops of $G$ is a stable set of $G$, then $\PP(G)$ is SOC-representable. See Figure~\ref{fig1} for an illustration of the proof technique.

\begin{figure}[htbp]
\centering
  \epsfig{figure=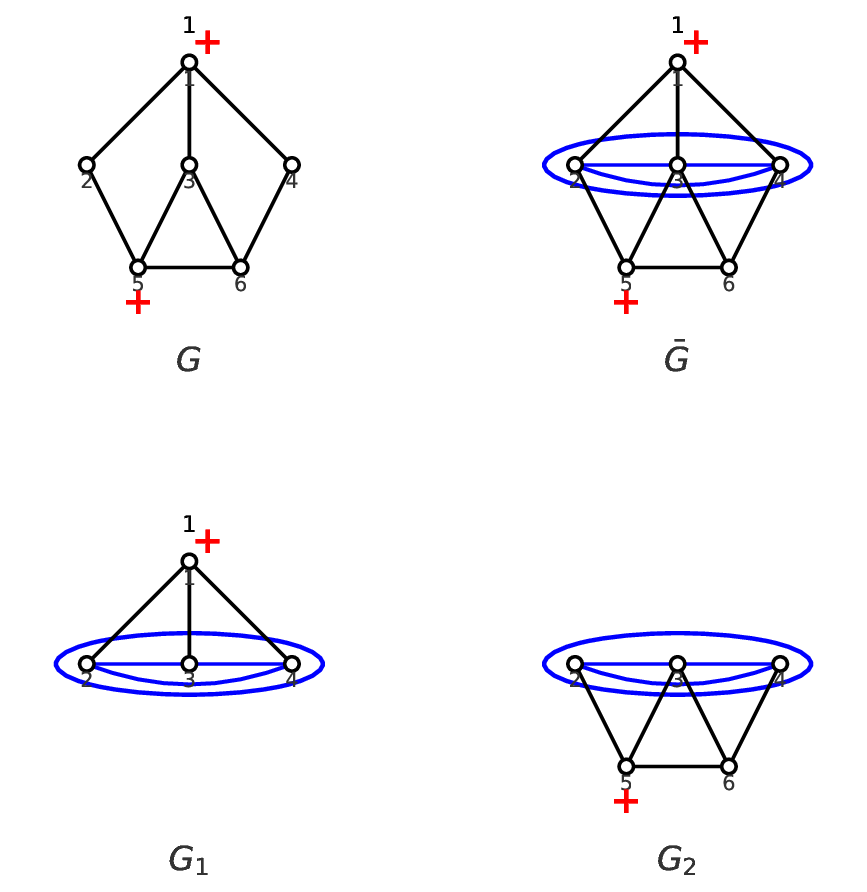, scale=0.5, trim=0mm 0mm 0mm 0mm,clip}
  \caption{An illustration of the proof technique for~\cref{th: stableSet}. Consider the graph $G=(V,E)$; the plus signs on nodes 1 and 5 indicate plus loops on these nodes. Consider node 1; we have $N(1)=\{1,2,3,4\}$. The hypergraph $\bar G$ is constructed by adding to graph $G$ all edges that are subsets of $N(1)\setminus \{1\}$. 
  By~\cref{obsxxx}, the formulation for $\PP(\bar G)$ is an extended formulation for $\QP(G)$. Hypergraph $G_1$ is the section hypergraph of $\bar G$ induced by $N(1)$ and hypergraph $G_2$ is the section hypergraph of $\bar G$ induced by $V \setminus \{1\}$. 
  By~\cref{cor: decomp}, $\PP(\bar G)$ decomposes into $\PP(G_1)$ and $\PP(G_2)$. By~\cref{convHull} and~\cref{obsxxx}, both $\PP(G_1)$ and $\PP(G_2)$ are SOC-representable, implying that $\QP(G)$ is SOC-representable.}
  \label{fig1}
\end{figure}

\begin{theorem}\label{th: stableSet}
    Let $G=(V,E, L)$ be a hypergraph. Define $V^+=\{i \in V: \{i,i\} \in L^+\}$. If $V^+$ is a stable set of $G$, then $\PP(G)$ is SOC-representable.
\end{theorem}

\begin{proof}
    The proof is by induction on the number of nodes in $V^+$. In the base case, we have $|V^+| = 0$; \ie $L= L^-$. Define the loopless hypergraph $G' =(V,E)$. By~\cref{minusloops}, the formulation for $\PP(G)$ is obtained by putting together the formulation for the multilinear polytope $\MP(G')$ together with the linear inequalities~\eqref{harmless}. Therefore, in this case $\PP(G)$  is a polyhedron and the statement follows.

    Now suppose that $|V^+| = k$ for some $k \geq 1$. Let $i \in V^+$, and define 
    $$N(i) := \bigcup_{p \in E \cup L: \; p \ni i}{p}.$$ Moreover, define $N'(i) := N(i) \setminus \{i\}$.  Now, define the hypergraph $\bar G:=(V, \bar E, L)$, where $\bar E = E \cup \{p:\; p \subseteq N'(i), \; |p| \geq 2\}$. By~\cref{obsxxx}, an extended formulation for $\PP(\bar G)$ serves as an extended formulation for $\PP(G)$.
Let $G_1$ be the section hypergraph of $\bar G$ induced by $N(i)$ and let $G_2$ be the section hypergraph of $\bar G$ induced by $V \setminus \{i\}$. By construction $G_1 \cap G_2$ is a complete hypergraph with node set $N'(i)$. Moreover, since $V^+$ is a stable set of $G$ and $i \in V^+$, we have $N'(i) \cap V^+ = \emptyset$; \ie $G_1 \cap G_2$ has no plus loops. Therefore, all the assumptions of~\cref{cor: decomp} are satisfied and $\PP(\bar G)$ is decomposable into $\PP(G_1)$ and $\PP(G_2)$. Since the hypergraph $G_1$ has one plus loop, by~\cref{obsxxx},~\cref{minusloops}, and~\cref{convres}, $\PP(G_1)$ is SOC-representable. Now consider the set $\PP(G_2)$. Note that the hypergraph $G_2$ has one fewer plus loops than the hypergraph $G$. Therefore, by the induction hypothesis $\PP(G_2)$ is SOC-representable as well. We then conclude that $\PP(\bar G)$ and as a result $\PP(G)$ is SOC-representable.
\end{proof}

By~\cref{th: stableSet} and~\cref{obsxxx}, the following result regarding the SOC-representability of $\QP(G)$ is immediate:

\begin{corollary}\label{stableSet}
    Let $G=(V,E, L)$ be a graph. Define $V^+=\{i \in V: \{i,i\} \in L^+\}$. If $V^+$ is a stable set of $G$, then $\QP(G)$ is SOC-representable.
\end{corollary}

\section{Polynomial-size extended formulations}
\label{sec: polysize}

In this section, we obtain sufficient conditions under which $\QP(G)$ has a polynomial-size SOC-representable formulation. Recall that~\cref{stableSet} provides a sufficient condition under which $\QP(G)$ is SOC-representable. However, this representation might be of exponential size. For example, letting $V^+ = \emptyset$ in~\cref{stableSet} and using~\cref{minusloops}, we deduce that in this special case $\QP(G)$ has a polynomial-size extended formulation if and only if the Boolean-quadric polytope $\BQP(G)$ has a polynomial-size extended formulation.
From~\cite{ChekChu16,AboFio19} it follows that the linear extension complexity of $\BQP(G)$ grows exponentially in the treewidth of $G$. Hence, it seems that a bounded treewidth for $G$ is a necessary condition for the polynomial-size representability of $\QP(G)$. In addition, by~\cref{SOCrep}, a necessary condition for polynomial-size representability of our proposed SOC relaxations is that the degree of each node with a plus loop should be $O(\log|V|)$.

In order to formally state our result, we need to introduce some terminology. Given a graph $G = (V, E)$, a \emph{tree decomposition} of $G$ is a pair $(\X, T)$, where $\X = \{X_1, \cdots, X_m\}$ is a family of subsets of $V$, called \emph{bags}, and $T$ is a tree with $m$  nodes,
where each node is associated with bag $X_i$, $i \in [m]$, such that:

\medskip

\begin{enumerate}
    \item $V = \bigcup_{i \in [m]}{X_i}$.
    \item For every edge $\{v_j, v_k\} \in E$, there is a bag $X_i$ for some $i \in [m]$ such that $X_i \ni v_j, v_k$. 
    \item For each node $v \in V$, the set of all bags containing $v$ induces a connected subtree of $T$.
\end{enumerate}
\medskip
The \emph{width} $\omega(\X)$ of a tree decomposition $(\X,T)$ is the size of its largest bag $X_i$ minus one. The \emph{treewidth} $\tw(G)$ of a graph $G$ is the minimum width among all possible tree decompositions of $G$.  In the remainder of this section, without loss of generality, we consider \emph{nonredundant} (or \emph{reduced}) tree decompositions, \ie no bag is contained in any other bag, in which we have $m \leq |V|$~\cite{kloks94}.

Now consider a graph $G=(V,E)$ and let $(\X, T)$ be a tree decomposition of $G$. For each $v \in V$, we define the \emph{spread} of node $v$ with respect to tree decomposition $(\X, T)$, as:
\begin{equation}\label{spread}
    s_v(\X) :=\sum_{i \in [m]: X_i \ni v}(|X_i|-1).
\end{equation}
For any node $v \in V$ that is present only in one bag $X_i$, we have $s_v(\X) = |X_i|-1$. Therefore, for a graph $G$ with $\tw(G) = k$, for any tree decomposition $(\X, T)$ of $G$ we have $s_v (\X) \geq k$ for some $v \in V$.
Moreover, from property 2 of a tree decomposition it follows that the spread of a node is lower bounded by its \emph{degree}; \ie the number of edges incident to $v$. 
If the graph $G$ is a tree, then for any valid tree decomposition of $G$ in which each bag consists of one edge,
the spread of a node $v$ is equal to its degree.

For a graph with loops, \ie $G=(V,E, L)$, we define its tree decomposition, treewidth, and spread, as the tree decomposition, treewidth, and spread of the corresponding loopless graph; \ie $(V,E)$.  Given a hypergraph $G=(V,E,L)$, we define its tree decomposition, treewidth, and spread, as the tree decomposition, treewidth, and spread of its \emph{intersection} graph; \ie the graph $(V,E')$ in which any $v \neq v' \in V$ are adjacent, if $v,v' \in e$ for some $e \in E$. Note that according to this definition, the intersection graph of a graph is the graph itself.

\medskip
We are now ready to state the main result of this section.

\begin{theorem}\label{th:polysize}
    Let $G=(V, E, L)$ be a hypergraph and denote by $(\X,T)$ a tree decomposition of $G$.
    Suppose that $(\X,T)$ satisfies the following properties:
    \medskip
    \begin{itemize}[leftmargin=3em]
        \item [(C1)] For each bag $X_j \in \X$, there exists at most one node $i \in X_j$ such that $\{i,i\} \in L^+$.
        \item [(C2)] The width $\omega(\X)$ is bounded; \ie $\omega(\X) \in O(\log|V|)$.
        \item [(C3)] For each plus loop $\{i,i\} \in L^+$, the spread of node $i$ is bounded; \ie $s_i (\X) \in O(\log|V|)$. 
    \end{itemize}
    \medskip
    Then $\PP(G)$ has a polynomial-size SOC-representable formulation.
\end{theorem}

By~\cref{th:polysize} and~\cref{obsxxx} the following result regarding $\QP(G)$ is immediate:

\begin{corollary}\label{polysize}
    Let $G=(V, E, L)$ be a graph and denote by $(\X,T)$ a tree decomposition of $G$.
    Suppose that $(\X,T)$ satisfies Conditions~(C1)-(C3) of~\cref{th:polysize}.
    Then $\QP(G)$ has a polynomial-size SOC-representable formulation.
\end{corollary}

To prove~\cref{th:polysize}, we make use of the following two lemmata regarding some properties of tree decompositions:

\begin{lemma}\label{bagunify}
 Let $G=(V,E,L)$ be a graph and denote by $(\X,T)$ a tree decomposition of $G$. Suppose that $(\X,T)$ satisfies conditions~(C1)-(C3) of~\cref{th:polysize}. Then $G$ has a tree decomposition $(\X',T')$ satisfying conditions (C1) and (C2) of~\cref{th:polysize} such that each node with a plus loop is present only in one bag.    
\end{lemma}
\begin{proof}
Let $\X = \{X_1, \cdots, X_m\}$ and consider a node $\bar v\in V$ such that $\{\bar v, \bar v\} \in L^+$. Define $Q := \{i \in [m]: X_i \ni \bar v\}$. Denote by $T_{\bar v}$ the connected subtree of $T$ containing node $\bar v$. Now construct $(\X', T')$ as follows. Define a new bag $X_{\bar v}= \cup_{i \in Q} {X_i}$ and let $\X' = (\X  \setminus \{X_i, i \in Q\}) \cup \{X_{\bar v}\}$. We then construct a new tree $T'$ from tree $T$, by first deleting all nodes corresponding to $T_{\bar v}$, then adding a new node $\bar v$ corresponding to the bag $X_{\bar v}$, and connecting $\bar v$ to any node in $T$ that was adjacent to some node in $T_{\bar v}$. It is simple to check that $(\X',T')$ is a valid tree decomposition for $G$. By applying the above argument recursively for each node with a plus loop, we obtain a tree decomposition of $G$ denoted by $(\bar \X, \bar T)$  in which each node with a plus loop is present in one bag.
Notice that since $(\X,T)$ satisfies condition~1 of~\cref{th:polysize}, there is no overlap between the set of bags containing any pair of nodes with plus loops and this property is preserved during the above recursive bag consolidation algorithm.

Now consider the tree decomposition $(\bar \X,\bar T)$ of $G$ in which each node with a plus loop is present in precisely one bag in $\bar \X$. By construction, $(\bar \X,\bar T)$ satisfies condition~1 of~\cref{th:polysize}. Define 
$$s_{\max} (\X) := \max_{i\in V: \{i,i\} \in L^+}{s_i(\X)}.$$
It then follows that $\omega(\bar \X) = \max\{\omega(\X), s_{\max}(\X)\}$. Since $(\bar \X,\bar T)$ satisfies conditions~2 and~3 of~\cref{th:polysize}, we deduce that $\omega(\bar \X) \in O(\log(|V|))$.

\end{proof}

Let $G=(V,E)$ be a graph and let $S \subseteq V$. The graph obtained from $G$ by \emph{removing} nodes in $S$ is the subgraph of $G$ induced by $V \setminus S$; \ie the graph $(V \setminus S, E')$, where $E':=\{\{u,v\} \in E: u,v  \in V \setminus S\}$.

\begin{lemma}\label{auxtree}
    Let $G=(V,E)$ be a graph, and let $(\X, T)$ be a tree decomposition of $G$. 
    Then we have:
    
    \begin{itemize}
        \item [(i)] Let $X_i \in \X$, let $C \subseteq X_i$, and let $G'$ be a graph obtained from $G$ by adding edges $\{u, v\}$ for all $u\neq v \in C$. Then $(\X,T)$ is a tree decomposition of $G'$.
\item [(ii)] Let $X_i \in \X$ be a bag corresponding to a leaf $v_i$ of $T$, and let $T'$ be a subtree of $T$ obtained by removing the leaf $v_i$. Let $G'$ be a graph obtained from $G$ by removing all nodes that appear only in $X_i$. Then $(\X \setminus \{X_i\}, T')$ is a tree decomposition of $G'$.
\end{itemize}
\end{lemma}
\begin{proof}
    First, consider part~(i). Since the node set of $G'$ is identical to that of $G$ and we are not changing the bags, property~1 of a tree decomposition is trivially satisfied. All additional edges in $G'$ are inside the bag $X_i$; therefore, property~2 of a tree decomposition is satisfied.
    Finally, notice that the tree decomposition remains unchanged, implying that property~3 of a tree decomposition is trivially satisfied. 

    Next, consider part~(ii). Let $G'=(V',E')$, where $V' = V \setminus \{v \in X_i: v \notin X_j, \; \forall X_j \in \X, j \neq i\}$. Consider some node $v \in V'$; by definition of $V'$, $v$ must be present in some bag $X_j \in \X$ with $j \neq i$. Since the bag $X_j$ is present in $\X \setminus \{X_i\}$, we deduce that property~1 of a tree decomposition is satisfied.
    Consider some $\{u,v\} \in E'$; since $u \in V'$ and $v \in V'$, we deduce that $u,v \in X_j$ for some bag $X_j \in \X$ with $j \neq i$. Therefore, property~2 of a tree decomposition is satisfied.
    Finally, $T'$ is obtained by removing a leaf from  $T$. Therefore, property~3 of a tree decomposition is satisfied.
\end{proof}

We are now ready to prove~\cref{th:polysize}.
In the following, by $\poly(|V|)$, we imply a polynomial function in $|V|$.

\begin{proof}[Proof of~\cref{th:polysize}]
First consider assumption~(C1); by property~2 of a tree decomposition, (C1) implies that the set $\{i \in V: \{i,i\}\in L^+\}$ is a stable set of $G$. Therefore,~\cref{th: stableSet} implies that $\PP(G)$ is SOC-representable. In the remainder of this proof, we construct a SOC-representable formulation for $\PP(G)$ that is polynomial size.    
Consider a tree decomposition $(\bar \X, \bar T)$ of (the intersection graph) of the hypergraph $G$ satisfying conditions~(C1)-(C3). From~\cref{bagunify} it follows that using $(\bar \X, \bar T)$, we can construct in polynomial time, a tree decomposition $(\X,T)$ of $G$ satisfying conditions~(C1) and~(C2) such that each node with a plus loop is present in only one bag of $\X$. Consider the tree decomposition $(\X, T)$.  
Without loss of generality, assume that $T$ is a rooted tree with any node chosen as its root. Let $X_k$ be a bag corresponding to a leaf of $T$ and denote by $X_j$ the bag whose node is the parent of the node corresponding to $X_k$. 
The following cases arise:
\medskip
\begin{itemize}
    \item [(I)] There exists no node $i \in X_k$ such that $\{i,i\} \in L^+$. Define $X_{\cap} = X_j \cap X_k$. Define the hypergraph $\bar G=(V,\bar E, L)$, where $\bar E = E \cup \{p \subseteq X_{\cap}: |p| \geq 2\}$. By~\cref{obsxxx}, a formulation for $\PP(\bar G)$ serves as an extended formulation for $\PP(G)$. Let $G_1$ be the section hypergraph of $\bar G$ induced by $X_k$, and let $G_2$ be the section hypergraph of $\bar G$ induced by $(V \setminus X_k) \cup X_{\cap}$. 
    By construction, $G_1 \cap G_2$ is a complete hypergraph. Moreover, from property~2 of a tree decomposition it follows that $\bar G = G_1 \cup G_2$. By assumption, there exists no node $i \in X_\cap$ such that $\{i,i\} \in L^+$.  Therefore, by~\cref{cor: decomp}, the set $\PP(\bar G)$ decomposes into $\PP(G_1)$ and $\PP(G_2)$. Now, consider $\PP(G_1)$; notice that $G_1$ has $n_1: = |X_k|$ nodes. Let $G'_1$ be the hypergraph obtained from $G_1$ by removing the loops. Since $G_1$ has no plus loops, by~\cref{minusloops}, an extended formulation for $\PP(G_1)$ is given by an extended formulation for the multilinear polytope $\MP(G'_1)$ together with at most $n_1$ linear inequalities of the form~\eqref{harmless}. 
    An extended formulation for $\MP(G'_1)$ with $2^{n_1}$ variables and inequalities is given by~\cref{prop:RLT}. By definition of the width, we have  $n_1 \leq \omega(\X)$.
    Therefore, from assumption~(C2) it follows that $\PP(G_1)$ has a linear extended formulation with at most $\poly(|V|)$ variables and inequalities. 
    
\medskip

    \item [(II)] There exists a node $i \in X_k$ such that $\{i,i\} \in L^+$. In this case, by construction, $X_k$ is the only bag containing node $i$. This in turn implies that $i \notin X_{\cap}$, where $X_{\cap}$ is as defined in part~(I).
    Moreover, by assumption~(C1), there exists no other node $j \in X_k$ such that $\{j,j\} \in L^+$. Define the hypergraph $\bar G$ as in Part~(I). Again, let $G_1$ be the section hypergraph of $\bar G$ induced by $X_k$, and let $G_2$ be the section hypergraph of $\bar G$ induced by $(V \setminus X_k) \cup X_{\cap}$. Using a similar line of arguments as in Part~(I) above we deduce that $\PP(\bar G)$ decomposes into $\PP(G_1)$
    and $\PP(G_2)$. Now consider $\PP(G_1)$ and denote by $G'_1$ the hypergraph obtained from $G_1$ by removing all its minus loops. By~\cref{minusloops} an extended formulation for $\PP(G_1)$ is obtained by putting together an extended formulation for $\PP(G'_1)$ together with $n_1:=|X_k|$ linear inequalities of the form~\eqref{harmless}. Finally, consider $\PP(G'_1)$; notice that $G'_1$ has one plus loop. Therefore, from~\cref{convres} and~\cref{obsxxx} it follows that $\PP(G'_1)$ is SOC-representable and this extended formulation contains at most $2^{n_1}$ variables and inequalities.
    By definition of the width, we have  $n_1 \leq \omega(\X)$.
    Therefore, from assumption~(C2) it follows that $\PP(G_1)$ has a SOC-representable formulation with at most $\poly(|V|)$ variables and inequalities. 
\end{itemize}
\medskip
Now consider the hypergraph $G_2$ constructed in Parts~(I) and~(II) above. Notice that $G_2$ has at least one fewer node than the hypergraph $G$. By~\cref{auxtree}, a tree decomposition of the hypergraph $G_2$ is given by $(\X\setminus\{X_k\}, T_2)$, where $T_2$ is a subtree of $T$ obtained by removing the leaf corresponding to the bag $X_k$. Clearly, this tree decomposition satisfies conditions~(C1)-(C2) and each node with a plus loop is contained in only one bag. Therefore, the proof follows by induction on the number of nodes of $G$.
\end{proof}

Let $G=(V,E,L)$ be a graph. From the proof of~\cref{polysize} it follows that if a tree decomposition of $G$ that satisfies conditions~(C1)-(C3) is given, then a polynomial-size SOC representable formulation of $\QP(G)$ can be constructed in polynomial time.  A natural question is whether it is possible to check in polynomial-time whether $G$ has a tree decomposition satisfying conditions~(C1)-(C3). It is well-known that condition~(C2) can be checked in polynomial time~\cite{bod93}. However, such a tree decomposition may not satisfy conditions~(C1) and~(C3). We leave as an open question the complexity of checking conditions~(C1)-(C3) of~\cref{th:polysize}.

\medskip

The next three propositions are consequences of~\cref{polysize}. In all cases, given a graph $G$, a polynomial-size SOC representable formulation of $\QP(G)$ can be constructed in polynomial time.
The first result can be considered as a generalization of~\cref{padtree} to the continuous case: 

\begin{proposition}\label{treepoly}
    Let $G=(V, E, L)$ be a graph, and let $V^+ :=\{i \in V: \{i,i\} \in L^+\}$ be a stable set of $G$. Suppose that $(V,E)$ is acyclic and the degree of each node $i \in V^+$ is $O(\log|V|)$.
    Then $\QP(G)$ has a polynomial-size SOC-representable formulation.
\end{proposition}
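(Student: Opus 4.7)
The plan is to invoke \cref{polysize} by exhibiting a tree decomposition of $G$ satisfying its three hypotheses (C1)--(C3). Since $(V,E)$ is acyclic, it is a forest, and it admits the standard \emph{edge-bag} tree decomposition $(\X,T)$ of width at most $1$: take one bag $X_e := \{i,j\}$ for each edge $e = \{i,j\} \in E$, together with a singleton bag $\{v\}$ for every vertex $v$ isolated in $(V,E)$, and form the tree $T$ on these bags by rooting each connected component of the forest and connecting each edge-bag to the edge-bag (or singleton bag) of its parent. A direct check of the three tree-decomposition axioms shows that, for every $v \in V$, the bags containing $v$ induce a connected subtree of $T$ and that every $v$ with $\deg_{(V,E)}(v) \geq 1$ lies in exactly $\deg_{(V,E)}(v)$ bags.

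Next I would verify (C1)--(C3) in order. For (C2), every bag has at most two elements, so $\omega(\X) \leq 1 \in O(\log|V|)$. For (C1), since $V^+$ is a stable set of $G$, no edge $\{i,j\} \in E$ has both endpoints in $V^+$, so each edge-bag contains at most one node with a plus loop, and singleton bags contain only a single node. For (C3), for each $i \in V^+$, the bags containing $i$ are precisely the edge-bags of the edges of $(V,E)$ incident to $i$ (or the singleton bag $\{i\}$, if $i$ is isolated in $(V,E)$), and each such bag has size at most $2$, so
\[
s_i(\X) \;=\; \sum_{j:\, X_j \ni i}(|X_j| - 1) \;\leq\; \deg_{(V,E)}(i) \;\in\; O(\log|V|)
\]
by hypothesis. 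All three conditions are verified, so \cref{polysize} applies and yields the desired polynomial-size SOC-representable formulation of $\QP(G)$.

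There is essentially no obstacle here: the result is a direct continuous analogue of \cref{padtree}, where the forest is handled by the trivial width-$1$ edge-bag decomposition and the only new ingredient beyond the binary case is controlling the blow-up coming from the plus loops via condition (C3), which is exactly what the logarithmic degree hypothesis on $V^+$ delivers. All the substantive work has already been done in \cref{polysize} together with the convexification machinery of \cref{sec: convexHull}.
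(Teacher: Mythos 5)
Your proposal is correct and follows essentially the same route as the paper: both use the width-one edge-bag tree decomposition of the forest and verify (C1)--(C3) exactly as you do, with the paper simply reducing to a single tree component up front rather than handling isolated vertices and multiple components explicitly as you did.
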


\begin{proof}
Without loss of generality, assume that $G$ is a tree. Otherwise, we can apply the following argument to each connected component of $G$ separately.
Consider a natural tree decomposition of $G$, where we create a bag for each edge $\{i,j\} \in E$, and the bags are connected according to the topology of the tree $(V,E)$. First, since by assumption $V^+$ is a stable set of $G$, each bag contains at most one node $i$ such that $\{i,i\} \in L^+$, hence satisfying condition~(C1) of~\cref{polysize}.
Second, the width of this tree decomposition is equal to one, hence satisfying condition~(C2) of~\cref{polysize}. Third, since all bags have size two, the spread of each node equals its degree. Since by assumption for each node $i \in V^+$, the degree is $O(\log|V|)$, we deduce that condition~(C3) of~\cref{polysize} is satisfied. Therefore, all the conditions of~\cref{polysize} are satisfied, implying that $\QP(G)$ has a polynomial-size SOC-representable formulation. 
\end{proof}

The next result can be considered as a generalization of~\cref{padcycle} to the continuous case:

\begin{proposition}\label{Cyclepoly}
Let $G=(V,E,L)$ be a graph, and let $V^+ :=\{i \in V: \{i,i\} \in L^+\}$ be a stable set of $G$. Suppose that $(V,E)$ is a chordless cycle on V.  Then $\QP(G)$ has a polynomial-size SOC-representable formulation.
\end{proposition}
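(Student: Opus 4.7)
The plan is to apply~\cref{polysize} by exhibiting a tree decomposition $(\X,T)$ of the loopless graph $(V,E)$ satisfying conditions (C1)--(C3). First I would observe that $V\setminus V^+\ne\emptyset$: since $V^+$ is a stable set of the chordless cycle, $|V^+|\le\lfloor|V|/2\rfloor<|V|$. Pick any $v_0\in V\setminus V^+$ and relabel the cycle so that $v_0=v_1$ and the edges are $\{v_i,v_{i+1}\}$ for $i\in[n-1]$ together with $\{v_n,v_1\}$, where $n:=|V|$.

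Next I would build a ``fan'' tree decomposition rooted at $v_1$: for each $i\in\{2,\dots,n-1\}$ introduce the bag $X_i:=\{v_1,v_i,v_{i+1}\}$, and connect $X_2,\dots,X_{n-1}$ as a path. Every vertex lies in some bag; every cycle edge is contained in some bag (the edges incident to $v_1$ lie in $X_2$ and $X_{n-1}$, while every other edge $\{v_i,v_{i+1}\}$ lies in $X_i$); and for each vertex the set of bags containing it forms a contiguous subpath of the tree. Hence $(\X,T)$ is a legitimate tree decomposition.

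It then remains to verify the three conditions of~\cref{polysize}. Each bag $X_i$ contains $v_1\notin V^+$ together with two cyclically consecutive cycle vertices $v_i,v_{i+1}$; since $V^+$ is stable, at most one of $v_i,v_{i+1}$ belongs to $V^+$, so (C1) holds. Every bag has size at most $3$, hence $\omega(\X)\le 2$, which is $O(\log|V|)$, giving (C2). Finally, each vertex $v_j$ with $j\ne 1$ belongs to at most two bags, namely $X_{j-1}$ and $X_j$ (with the natural convention at the endpoints), each of size at most $3$, so its spread is at most $4=O(\log|V|)$; in particular every plus-loop node, which cannot coincide with $v_1$, has spread bounded by an absolute constant, verifying (C3). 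Applying~\cref{polysize} then yields the desired polynomial-size SOC-representable formulation of $\QP(G)$.

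The only substantive step is the choice of a non-plus-loop pivot vertex, which is immediate from $V^+$ being stable in a cycle; the remainder amounts to routine verification of the tree-decomposition axioms and of the bounds in (C1)--(C3), so I do not anticipate any real technical obstacle.
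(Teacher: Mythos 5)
Your proposal is correct and follows essentially the same route as the paper: both construct the ``fan'' tree decomposition whose bags each consist of a fixed non-plus-loop pivot vertex together with two consecutive cycle vertices, arranged along a path, and then verify (C1)--(C3) of \cref{polysize} in the same way. The only difference is cosmetic (you pivot on a relabeled $v_1$ while the paper pivots on $v_n$).
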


\begin{proof}
    Define $n:=|V|$. Let $V=\{v_1, \cdots, v_n\}$, and $E=\{\{v_i, v_{i+1}\}, \forall i \in [n]\}$, where we define $v_{n+1}:= v_1$. Suppose that
    $v_n \notin V^+$. This assumption is without loss of generality because $V^+$ is a stable set of $G$ and $G$ consists of a chordless cycle. Define the bags $X_i:=\{v_i, v_{i+1}, v_n\}$ for all $i \in [n-2]$. Connect the bags so that they form a path, \ie $X_{i}$ is adjacent to $X_{i+1}$ for all $i \in [n-1]$. It is simple to check that this is a valid tree decomposition for $G$. First, consider a bag $X_i:=\{v_i, v_{i+1}, v_n\}$ for some $i \in [n-2]$. By assumption, $v_n \notin V^+$. Moreover, since $V^+$ is a stable set of $G$,
    and $\{v_i, v_{i+1}\} \in E$, at most one of the two nodes $v_i$ and $v_{i+1}$ are in $V^+$. Therefore, each bag contains at most one node in $V^+$, implying that condition~(C1) of~\cref{polysize} is satisfied.  
    Second, the width of this tree decomposition is two, hence, condition~(C2) of~\cref{polysize} is satisfied. Third, nodes $v_1$ and $v_{n-1}$ are each contained in one bag, while nodes $v_2,\cdots, v_{n-2}$ are each contained in two bags. Since each bag has size three, it follows that the spread of nodes $v_1$ and $v_{n-1}$ is two, while the spread of nodes 
    $v_2,\cdots, v_{n-2}$ is four. Notice that the spread of $v_n$ is $2(n-2)$, however this node is not in $V^+$. Therefore, condition~(C3) of~\cref{polysize} is satisfied.
    We then conclude that all conditions of~\cref{polysize} are satisfied, implying that $\QP(G)$ has a polynomial-size SOC-representable formulation.
\end{proof}

The next result indicates that if the set of nodes with plus loops is a subset of a \emph{large} stable set of $G$, then $\QP(G)$ has a polynomial-size SOC-representable formulation. Recall that a large stable set is equivalent to a small vertex cover, and the latter problem is fixed-parameter tractable.

\begin{proposition}\label{Coverpoly}
Let $G=(V,E,L)$ be a graph and let $V^+:=\{i \in V: \{i,i\} \in L^+\} \subseteq S$, where $S$ is a stable set of $G$  such that $|V \setminus S| \in O(\log|V|)$. Then $\QP(G)$ has a polynomial-size SOC-representable formulation.
\end{proposition}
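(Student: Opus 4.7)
The plan is to derive \cref{Coverpoly} as a direct corollary of \cref{polysize} by constructing an explicit tree decomposition that satisfies conditions (C1)--(C3) of that theorem. The key structural observation is that because $S$ is a stable set of $G$, every edge of $G$ has at least one endpoint in the small set $U := V\setminus S$, which has size $O(\log|V|)$ by assumption. Thus $U$ acts as a ``universal separator'' between the nodes of $S$, which is exactly what allows us to decompose $G$ into small overlapping pieces.

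Concretely, my first step is to set $U := V\setminus S$ and define one bag $X_i := U \cup \{i\}$ for each $i \in S$, then arrange these bags into a path (the choice of tree topology is immaterial; any connected tree on these bags will work). My second step is to verify that $(\X,T)$, with $\X := \{X_i : i \in S\}$ and $T$ the path on the bags, is a valid tree decomposition of $G$. Property~1 is immediate since each vertex of $V$ lies either in $S$ (hence in its own bag) or in $U$ (hence in every bag). For property~2, any edge $\{u,v\} \in E$ has at least one endpoint in $U$ since $S$ is stable; if both endpoints lie in $U$ they are jointly covered by every bag, while if $v \in S$ then $X_v = U \cup \{v\}$ covers the edge. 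Property~3 is trivial because each $u \in U$ sits in all bags (a connected subtree of $T$) while each $i \in S$ sits in exactly one bag.

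The third step is to check the three hypotheses of \cref{polysize}. Condition (C1) holds because $V^+ \subseteq S$ and each bag $X_i$ contains exactly one node of $S$, namely $i$, so at most one node per bag has a plus loop. Condition (C2) holds because $\omega(\X) = |X_i| - 1 = |U| \in O(\log|V|)$. Condition (C3) holds because any $i \in V^+ \subseteq S$ appears only in the bag $X_i$, giving spread $s_i(\X) = |X_i|-1 = |U| \in O(\log|V|)$. Invoking \cref{polysize} then yields a polynomial-size SOC-representable formulation of $\QP(G)$.

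I do not anticipate any technical obstacle; the entire argument is structural, and the only mildly degenerate case is $S = \emptyset$, in which $|V| \in O(\log|V|)$ forces $|V|$ to be a constant, so $\QP(G)$ is trivially SOC-representable of constant size (for instance by \cref{completeG} applied to any completion of $G$ to a complete graph with at most one plus loop per connected component, or simply by using a single bag $X = V$ in \cref{polysize}). The main conceptual point is simply recognizing that the small vertex cover $U$ furnishes the correct tree decomposition on which \cref{polysize} can be applied.
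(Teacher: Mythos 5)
Your proof is correct and follows essentially the same route as the paper: both build a tree decomposition from the small vertex cover $U=V\setminus S$ and then invoke \cref{polysize}, with all three conditions verified identically. The only cosmetic difference is that the paper uses slightly smaller bags $\{v_i\}\cup N(v_i)$ arranged in a star around a central bag $V\setminus S$, whereas you put all of $U$ into every bag and use a path; the width and spread bounds are the same either way.
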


\begin{proof}
Let $S=\{v_1, \cdots, v_m\}$ and denote by $N(v_i)$ the set of nodes of $G$ adjacent to $v_i$ for all $i \in [m]$. Notice that $N(v_i) \subseteq V \setminus S$. Let $(\X,T)$ be a tree decomposition of $G$, where $X_i = \{v_i\} \cup N(v_i)$ for all $i \in [m]$ and $X_{m+1} = V \setminus S$. Create a star-shaped tree by connecting $X_i$, $i \in [m]$ to $X_{m+1}$. From the definition of a stable set, it follows that this is a valid tree decomposition of $G$. Next, we show that $(\X,T)$ satisfies the assumptions of ~\cref{polysize}. First, each bag $X_{i}$, $i \in [m]$ contains at most one node in $V^+$ because $N(v_i) \cap V^+ = \emptyset$. Therefore, condition~(C1) of~\cref{polysize} is satisfied. Second, the width of $(\X,T)$ is at most $|V \setminus S|$, which by assumption is $O(\log|V|)$. Therefore, condition~(C2) of~\cref{polysize} is satisfied. Third, each node $v_i \in V^+$ is present in one bag $X_i$ of $(\X,T)$, and the size of this bag is $N(v_i)+1$, implying that $s_{v_i} (\X)\in O(\log|V|)$. Therefore, condition~(C3) of~\cref{polysize} is satisfied. We then conclude that all conditions of~\cref{polysize} are satisfied, implying that $\QP(G)$ has a polynomial-size SOC-representable formulation.
\end{proof}

For example, suppose that $G=(V,E,L)$ is a bipartite graph and denote by $U, W$ the bipartition of $V$. Suppose that $V^+ \subseteq U$ and that $|W| \in O(\log|V|)$. Then all the assumptions of~\cref{Coverpoly} are satisfied and, therefore, $\QP(G)$ has a polynomial-size SOC-representable extended formulation. 

\section{Extensions and open questions}
\label{sec: conclude}

The techniques used to obtain the convex hull results in this paper easily extend to the case where we replace the quadratic functions corresponding to loops with general convex functions. More precisely, consider a hypergraph $G=(V,E,L)$. For each $i \in V$, let $h_i: \mathbb{R} \rightarrow \mathbb{R}$ be a convex function, and define the set:
\begin{align*} 
 \PP(G_h):= \conv\Big\{ z \in \R^{V \cup E \cup L} : \; & z_{ii} \geq h_i(z_i), \; \forall \{i, i\} \in L^+, \; z_{ii} \leq h_i(z_i), \; \forall \{i, i\} \in L^-, \; z_e = \prod_{i \in e} {z_i},  \\
 &\; \forall e \in E, \; z_i \in [0,1], \; \forall i \in V \Big\}.
\end{align*}
Suppose that $\{i,i\} \in L^+$ for some $i \in V$. 
Denote by $\mathfrak{h}_i(u,v)$ the closure of the perspective of the convex function $h_i(v)$. 
Let $M \subseteq N(i)$ such that $M \ni i$ and let $M' = M\setminus \{i\}$. We then have:
\begin{align*}
z_{ii} \geq h_i(z_i) &= \sum_{K \subseteq M'}{h_i(z_i)\prod_{j\in K}{z_j}\prod_{j \in M' \setminus K}(1-z_j)}\\
&= \sum_{K \subseteq M'}
h_i\left(\frac{z_i\cdot \prod_{j\in K}{z_j}\prod_{j \in M' \setminus K}(1-z_j)}{\prod_{j\in K}{z_j}\prod_{j \in M' \setminus K}(1-z_j)}\right)\cdot \left(\prod_{j\in K}{z_j}\prod_{j \in M' \setminus K}(1-z_j)\right)\\
& =\sum_{K \subseteq M'}
{\mathfrak{h}_i\left(\ell_{d}(K\cup\{i\},M'\setminus K), \; \ell_{d-1}(K, M'\setminus K)\right)}\\
&= \sum_{J \subseteq M: J \ni i}{\mathfrak{h}_i\left(\ell_{d}(J,M \setminus J),\; \ell_{d-1}(J \setminus \{i\}, M\setminus J)\right)}.
\end{align*}
The above inequality together with the inequalities $\ell_{d}(J,M \setminus J) \geq 0$ for all $J \subseteq M$ defines a convex set because $\mathfrak{h}_i$ is a convex function. Moreover, using arguments similar to those in Proposition~\ref{convRelax}, one can verify that the above inequality is well-defined; in particular, the right-hand-side never attains the value of $+\infty$. Furthermore, for the above class of inequalities, the results of Proposition~\ref{dominance} and Theorem~\ref{convres} extend directly to this more general setting by using exactly the same proofs. Finally, the framework can also be generalized to the case where the nodes corresponding to $L^+$ form a stable set in a more general hypergraph $G = (V,E,L)$. 

Several questions remain open. First, we pose an open question whether the stable set assumption on the nodes with plus loops is necessary for SOC-representability of $\QP(G)$.
It is interesting to investigate whether Theorem~\ref{polysize} is tight with respect to the notion of spread; in particular, whether the number of variables required in any valid SOC extended formulation of the convex hull of $\PP(G)$ must necessarily grow exponentially with the spread of nodes with plus loops. Moreover, observe that characterizing $\QP(G)$ is a significantly more general task than solving~\cref{pQP}. For example, the techniques in ~\cite{AsterDey20} give an SOC-based approach to solving~\cref{pQP}, but cannot be used to characterize $\QP(G)$. 
Whether SOC formulations for solving~\cref{pQP} need representations comparable to the size of $\QP(G)$ remains an open question. Finally, incorporating the proposed relaxations into the state-of-the-art mixed-integer nonlinear programming solvers~\cite{IdaNick18} and performing an extensive computational study is a topic of future research.  

\bigskip
\noindent
\textbf{Acknowledgments:} The authors thank the two anonymous referees whose suggestions improved the quality of the paper.

\bigskip
\noindent
\textbf{Funding:}
A. Khajavirad is supported in part by AFOSR grant FA9550-23-1-0123 and ONR grant N00014-25-1-2491.
Any opinions, findings, conclusions or recommendations expressed in this material are those of the authors and do not necessarily reflect the views of the Air Force Office of Scientific Research or the Office of Naval Research.

\bibliographystyle{plain}
\bibliography{biblio}

\begin{thebibliography}{10}

\bibitem{AboFio19}
P.~Aboulker, S.~Fiorini, T.~Huynh, M.~Macchia, and J.~Seif.
\newblock Extension complexity of the correlation polytope.
\newblock {\em Operations Research Letters}, 47(1):47--51, 2019.

\bibitem{AnsPug25}
K.~M. Anstreicher and D.~Puges.
\newblock Extended triangle inequalities for nonconvex box-constrained
  quadratic programming.
\newblock {\em arXiv:2501.09150}, 2025.

\bibitem{AnsBur10}
K.M. Anstreicher and S.~Burer.
\newblock Computable representations for convex hulls of low-dimensional
  quadratic forms.
\newblock {\em Mathematical Programming}, 124:33–--43, 2010.

\bibitem{Bal85}
E.~Balas.
\newblock Disjunctive programming and a hierarchy of relaxations for discrete
  optimization problems.
\newblock {\em SIAM Journal on Algebraic and Discrete Methods}, 6:466--486,
  1985.

\bibitem{BaoSahTaw11}
X.~Bao, N.V. Sahinidis, and M.~Tawarmalani.
\newblock Semidefinite relaxations for quadratically constrained quadratic
  programming: A review and comparisons.
\newblock {\em Mathematical Programming}, 129:129--157, 2011.

\bibitem{BieMun18}
D.~Bienstock and G.~Mu\~noz.
\newblock {LP} formulations for polynomial optimization problems.
\newblock {\em SIAM Journal on Optimization}, 28(2):1121--1150, 2018.

\bibitem{bod93}
H.~L. Bodlaender.
\newblock A linear time algorithm for finding tree-decompositions of small
  treewidth.
\newblock In {\em Proceedings of the twenty-fifth annual ACM symposium on
  Theory of computing}, pages 226--234, 1993.

\bibitem{BurLet09}
S.~Burer and A.~N. Letchford.
\newblock On nonconvex quadratic programming with box constraints.
\newblock {\em SIAM Journal on Optimization}, 20(2):1073--1089, 2009.

\bibitem{ChekChu16}
C.~Chekuri and J.~Chuzhoy.
\newblock Polynomial bounds for the grid-minor theorem.
\newblock {\em Journal of the ACM (JACM)}, 63(5):1--65, 2016.

\bibitem{dPKha17MOR}
A.~Del~Pia and A.~Khajavirad.
\newblock A polyhedral study of binary polynomial programs.
\newblock {\em Mathematics of Operations Research}, 42(2):389--410, 2017.

\bibitem{dPKha18SIOPT}
A.~Del~Pia and A.~Khajavirad.
\newblock The multilinear polytope for acyclic hypergraphs.
\newblock {\em SIAM Journal on Optimization}, 28(2):1049--1076, 2018.

\bibitem{dPKha18MPA}
A.~Del~Pia and A.~Khajavirad.
\newblock On decomposability of multilinear sets.
\newblock {\em Mathematical Programming, Series A}, 170(2):387--415, 2018.

\bibitem{dPKha23mMPA}
A.~Del~Pia and A.~Khajavirad.
\newblock A polynomial-size extended formulation for the multilinear polytope
  of beta-acyclic hypergraphs.
\newblock {\em Mathematical Programming Series A}, pages 1--33, 2023.

\bibitem{HorTuy96}
R.~Horst and H.~Tuy.
\newblock {\em Global Optimization, Deterministic Approaches}.
\newblock Springer, 1996.

\bibitem{IdaNick18}
A.~Khajavirad and N.~V. Sahinidis.
\newblock {A hybrid {LP/NLP} paradigm for global optimization relaxations}.
\newblock {\em Mathematical Programming Computation}, 10(3):383--421, May 2018.

\bibitem{KimKoj03}
S.~Kim and M.~Kojima.
\newblock Exact solutions of some nonconvex quadratic optimization problems via
  {SDP} and {SOCP} relaxations.
\newblock {\em Computational optimization and applications}, 26(2):143--154,
  2003.

\bibitem{kloks94}
T.~Kloks.
\newblock {\em Treewidth: computations and approximations}.
\newblock Springer, 1994.

\bibitem{KolKou15}
P.~Kolman and M.~Kouteck{\`y}.
\newblock Extended formulation for {CSP} that is compact for instances of
  bounded treewidth.
\newblock {\em The Electronic Journal of Combinatorics}, pages P4--30, 2015.

\bibitem{laurent2009sums}
M.~Laurent.
\newblock {Sums of squares, moment matrices and optimization over polynomials}.
\newblock {\em {\em In M. Putinar and S. Sullivant (eds.)}, Emerging
  applications of algebraic geometry, The IMA Volumes in Mathematics and its
  Applications, vol 149 {\em Springer, New York, NY}}, pages 157--270, 2009.

\bibitem{Ye10}
Z.~Luo, W.~Ma, A.~M. So, Y.~Ye, and S.~Zhang.
\newblock Semidefinite relaxation of quadratic optimization problems.
\newblock {\em IEEE Signal Processing Magazine}, 27(3):20--34, 2010.

\bibitem{McC76}
G.P. McCormick.
\newblock Computability of global solutions to factorable nonconvex programs:
  Part {I}: convex underestimating problems.
\newblock {\em Mathematical Programming}, 10:147--175, 1976.

\bibitem{Pad89}
M.~Padberg.
\newblock The {B}oolean quadric polytope: Some characteristics, facets and
  relatives.
\newblock {\em Mathematical Programming}, 45(1--3):139--172, 1989.

\bibitem{Roc70}
R.T. Rockafellar.
\newblock {\em Convex Analysis}.
\newblock Princeton University Press, Princeton, 1970.

\bibitem{AsterDey20}
A.~Santana and S.~S. Dey.
\newblock The convex hull of a quadratic constraint over a polytope.
\newblock {\em SIAM Journal on Optimization}, 30(4):2983--2997, 2020.

\bibitem{SheTun95}
H.~D. Sherali and C.~H. Tuncbilek.
\newblock A reformulation-convexification approach for solving nonconvex
  quadratic programming problems.
\newblock {\em Journal of Global Optimization}, 7(1):1--31, 1995.

\bibitem{SheAda90}
H.D. Sherali and W.P. Adams.
\newblock A hierarchy of relaxations between the continuous and convex hull
  representations for zero-one programming problems.
\newblock {\em SIAM Journal of Discrete Mathematics}, 3(3):411--430, 1990.

\bibitem{tardella1988class}
F.~Tardella.
\newblock On a class of functions attaining their maximum at the vertices of a
  polyhedron.
\newblock {\em Discrete applied mathematics}, 22(2):191--195, 1988.

\bibitem{Hertog21}
J.~Zhen, D.~de~Moor, and D.~den Hertog.
\newblock An extension of the reformulation-linearization technique to
  nonlinear optimization.
\newblock {\em Available at Optimization Online}, 2021.

\end{thebibliography}

\end{document}